\def\Res{\operatorname{Res}}
\def\Bl{\operatorname{Bl}}
\begin{document}

\author{J. J.~Nu\~no-Ballesteros, G.~Pe\~nafort Sanchis}

\title[Iterated multiple points I]{Iterated multiple points I: functoriality, defining equations and pathologies}

\address{Departament de Matem\`atiques,
Universitat de Val\`encia, Campus de Burjassot, 46100 Burjassot
SPAIN}
\email{Juan.Nuno@uv.es}
\address{BCAM, Basque Center for Applied Mathematics, Mazarredo 14, E48009 Bilbao, Spain}
\email{gpenafort@bcamath.org}

\thanks{The first author has been partially supported by DGICYT Grant MTM2015--64013--P. 
The second author was partially supported by the ERCEA 615655 NMST Consolidator Grant and 
by the Basque Government through the BERC 2014-2017 program and by Spanish Ministry of Economy and
Competitiveness MINECO: BCAM Severo Ochoa excellence accreditation SEV-2013-0323.}

\subjclass[2010]{Primary 32S20; Secondary 14Q15, 58K30} 

\keywords{Multiple point spaces, iteration principle, pathologies}

	\begin{abstract}
This is the first of a two-part work on Kleiman's iterated multiple point spaces. We show  general properties of these spaces, leading to explicit equations describing them for maps (of any corank) between complex manifolds. We also describe pathologies regarding dimension and lack of symmetry. 	\end{abstract}

\maketitle

\section*{Introduction}

 The multiple point spaces of maps $f\colon X\to Y$  are a key tool in many areas, such as enumerative geometry \cite{Kleiman1982PlaneForms,Kleiman1990Multiple-point-,Rimany2002Multiple-Point-,KleimanMultiplePointFormulasI,  Ran1985,Marangell2010The-General-Qua}, the study of Thom polynomials \cite{Kazarian2003Multising,Ohmoto:2014}, the study of the vanishing (co)homology of disentanglements \cite{GoryunovMond, HoustonTop, Mond:2016, PenafortZach2018} and the study of finite determinacy of map-germs \cite{MararMondCorank1,Altintas:2014,AltintasMond2013, Penafort-Sanchis2014THE-GEOMETRY-OF,Penafort-Sanchis:2016a,Marar2012Double-point-cu,Nuno-Ballesteros2015On-multiple-poi}. Despite their relevance, the multiple point spaces are not well understood objects. While it is clear that the multiple point spaces must contain the strict multiple points (i.e., $r$-tuples $(x^1,\dots,x^r)$ such that $f(x^i)=f(x^j)$ and $x^i\ne x^j$, for all $i\ne j$), there is no consensus about the best way to include the diagonals in order to get a reasonable structure.

There are several approaches to the definition of multiple point spaces; some based on deformations \cite{Nuno-Ballesteros2015On-multiple-poi}, on the Hilbert scheme \cite{Kleiman1990Multiple-point-} or on the Fitting  ideals \cite{MondPellikaanFittingIdeals}. Here we study a general approach, Kleiman's \emph{iterated multiple point spaces} \cite{KleimanMultiplePointFormulasI}, defined for any separated morphism of schemes $f\colon X\to Y$. 

The \emph{double point space} of $f$ is 
\[
K_2=\Res_{\Delta X}(X\times_Y X),
\]
the residual space of the fibered product $X\times_Y X$ along the diagonal $\Delta X$. Composition of the structure map with the first projection gives a map $K_2\to X$. Higher order \emph{multiple point spaces} are defined iteratively: the triple point space $K_3$ is the double point space of $K_2\to X$, and comes with a map $K_3\to K_2$. The quadruple point space $K_4$ is the double point space of $K_3\to K_2$, and so on.

If $X$ and $Y$ are smooth and $f$ has only corank one singularities (i.e. if $f$ is curvilinear in Kleiman's terminology), then $K_r$ coincides with Mond's multiple point space $D^r$, the subspace of $X^r$ given by the vanishing of the iterated divided differences (see \cite{MondSomeRemarks}). Since the number of equations is $(r-1)p$, one deduces that $K_r$ is a local complete intersection in $X^r$, whenever it has the correct dimension $rn-(r-1)p$, with $n=\dim X$ and $p=\dim Y$. A remarkable theorem of Marar and Mond \cite{MararMondCorank1} states that a corank one map is stable if and only if all, $K_r$ are smooth of the correct dimension and that a corank one map germ is finitely determined if and only if all $K_r$ are isolated complete intersection singularities of the correct dimension.

The main difficulty with Kleiman's construction is to find explicit equations for $K_r$ in the presence of singularities of corank $\ge 2$. In this paper, we propose an alternative description of $K_r$ which solves this problem for maps between smooth spaces $X$ and $Y$, allowing singularities of any corank. For any fixed $X$, the spaces $K_r$ of the maps $f\colon X\to Y$ can be embedded as closed subspaces of a \emph{universal multiple point space} $B_r=B_r(X)$. By definition, $B_r$ is the multiple point space of the constant map $X\to *$. If $X$ is smooth of dimension $n$, then $B_r$ is smooth of dimension $rn$; indeed, it is the blowup of $B_{r-1}\times_{B_{r-2}} B_{r-1}$ along $\Delta B_{r-1}$. The main result, Theorem \ref{thmKrEqualsMr}, claims that
\[K_r=b^{-1}(K_{r-1}\times_{K_{r-2}} K_{r-1}) : E,\]
where $b\colon  B_r\to B_{r-1}\times_{B_{r-2}} B_{r-1}$ is the blowup map, $E$ is the exceptional divisor and $Z:W$ stands for the zero locus of the quotient  $I_Z:I_W$ of the defining ideal sheaves of two subspaces $Z$ and $W$. From this result, we derive explicit local equations of $K_r$ inside $B_r$, which are natural generalizations of the iterated divided differences in a convenient atlas of $B_r$. Again, $K_r$ is locally defined by $(r-1)p$ equations in $B_r$, hence it is a local complete intersection whenever it has the correct dimension. We remark that for $r=2$, our description of $K_r$ coincides with Ronga's double point space \cite{Ronga1972La-classe-duale}.

The proof of Theorem \ref{thmKrEqualsMr} is based on some nice functorial properties satisfied by $K_r$, found in Section \ref{secFunctoriality}. These properties are shown by introducing the \emph{multiple points functors}  $ {\bf K_r}$ from the arrow category $A'(\cC)$, where the maps $f\colon X\to Y$ are the objects and the 
morphisms from $f$ to $f'$ are diagrams of the form
\[
\begin{tikzcd}
X \arrow["f"]{r} \arrow[hookrightarrow]{d}& Y\arrow{d}\\
X'\arrow["f'"]{r}						& Y' 
\end{tikzcd}
\]
The key functorial property is that the multiple point functors commute with fibered products in $A'(\cC)$, that is,
\[{\bf K_r}(f_1\times_F f_2)={\bf K_r}(f_1)\times_{{\bf K_r}(F)}{\bf K_r}(f_2).\]
This rather abstract property implies more intuitive results, such as the good behavior of $K_r$ under unfoldings, under restrictions, and that multiple points can be computed coordinate-wise  (see Propositions \ref{propKrCoordinatewise} and \ref{propKrUnfolding}).

The final Section \ref{secPathologies} is devoted to pathologies exhibited by $K_r$ when $f$ is a finite map with  singularities of corank $\geq 2$ and $r$ is big enough. The first one is that $K_r$ never has the correct dimension, even when $f$ is stable. In particular, it must have components of different dimensions, since it is dimensionally correct along the corank one points. Second, the image of $K_r$ by $f$ does not coincide with the multiple point space in the target given by Fitting ideals, as defined by Mond and Pellikaan in \cite{MondPellikaanFittingIdeals}. The last pathology is the lack of symmetry of $K_r$ with respect to permutations of coordinates. Even for triple points, the natural action of $S_3$ on $X^3$ cannot be lifted to $K_3$. 

Appendix \ref{appendixIntersections} contains some considerations --relevant to Section \ref{secFunctoriality}-- about intersections in the setup of categories which have fibered products. For better exposition, some technical proofs are given in a separate Appendix \ref{appendixProofs}.

In a forthcoming paper \cite{SecondIterated}, the local properties of $K_r$ and their relation to stability and finite determinacy of maps will be studied. On one hand, we will show that $K_3$ is smooth when $f$ is stable and that it provides a desingularization of the multiple point space $D^3$, which is always singular when $f$ has singularities of corank $\ge 2$. The analogous result for $K_2$ was proven by Ronga in \cite{Ronga1972La-classe-duale}. On the other hand, smoothness fails from quadruple points onwards  for generically one-to-one maps of corank $\geq 2$. These pathologies will be used to give a simple criterion for finite determinacy within a wide range of dimensions, analogous to the Marar-Mond criterion for the corank one case \cite{MararMondCorank1}.

Based on Sections \ref{secEqsForKr} and \ref{secCoordinatesForBr}, we have implemented a library in {\sc Singular} \cite{SingularSoftware} to compute the generalised divided diferences of any polynomial map $f\colon\C^n\to\C^p$, that is, local equations of $K_r$ in the charts of the smooth space $B_r$. The library {\tt IteratedMultPoint.lib} is freely available (see \cite{Library}) and its usage is illustrated in Example \ref{exDoblesTrifoldCone}. We think that the library will be a useful tool for anyone interested in working with examples.

The technical obstacles of higher corank have forced authors to restrict their work to  singularities of corank one. However, the attention paid to higher corank singularities has been growing over the years. They are finding a place in works about finite determinacy \cite{Altintas:2014,MararNunoANoteOnFiniteDeterminacyForCorank2,Marar2012Double-point-cu,Penafort-Sanchis2014THE-GEOMETRY-OF,Penafort-Sanchis:2016a,Mond:2016},  enumerative geometry  \cite{Feher2012Thom-series-of-}, as well as some other topics \cite{Fernandez-de-Bobadilla2006A-reformulation}. We hope that this work will clarify some aspects of the higher corank case. 

 									\section*{Aknowledgements}
	We are grateful to Steve Kleiman, for very helpful discussions during a stay of the second author in Boston.

									\section{Preliminaries}\label{secPreliminaries}
									
	\subsection*{Terminology}
We use preimages and intersections as defined for any category that has fibered products (such as the arrow categories from Section \ref{secFunctoriality}). This is done by replacing the notion of subset by that of monomorphism. Details can be found in Appendix \ref{appendixIntersections}. 

An arrow $X\hookrightarrow Y$ stands for an embedding of complex spaces and, unless otherwise stated, such a map is assumed to be closed.

 In diagrams, the symbol ``$*$'' stands for the complex manifold consisting of a single point.

Given two closed complex subspaces $X=V(I)$ and $Y=V(J)$ of a complex space $Z$, we write $X:Y=V(I:J)\subseteq Z$.

Given a holomorphic map $f\colon X\to Y$ between manifolds, the \emph{corank} of  $f$ at  $x\in X$ is
\[\corank f_x=\dimc(\ker df_x).\]

This work is written in the category of complex spaces, but it can be adapted to the category of schemes with separated morphisms. Also, some results cited here are stated for schemes in the original sources.

	\subsection*{Residual schemes}
Residual spaces are similar to blowups, with the difference that the symmetric algebra plays the role that the Rees algebra plays for blowups. Here we introduce some properties satisfied by residual spaces, and their relation to blowups. Proofs which we could not find in the literature are contained in Appendix \ref{appendixProofs}.
\begin{definition}\label{defResidualSpace}
Given a closed complex subspace $W=V(I)\subseteq X$, the \emph{residual space} of $X$ along $W$ is the relative homogeneous spectrum
\[
\Res_WX=\Proj_X S(I),\]
where $S(I)$ stands for the symmetric algebra of $I$, regarded as an $\cO_X$-module.
The residual space comes equipped with a canonical proper morphism 
\[\Res_WX\stackrel{r}{\to} X.\]
\end{definition}

\begin{prop}\label{propResidualOfSubspaceAlongIntersection}
Given a embedding $X\hookrightarrow \cX$ (not necessarily closed) and a closed embedding $\cW\hookrightarrow \cX$, there is a unique embedding $\Res_{\cW\cap X}X\hookrightarrow \Res_{\cW}\cX$, making the following diagram commutative:
\[
\begin{tikzcd}[column sep=1em]
\Res_{\cW\cap X}X	\arrow[d]\arrow[r,hookrightarrow] &\Res_\cW\cX\arrow[d] \\
X\arrow[r,hookrightarrow]& \cX
\end{tikzcd}
\]
This construction is functorial, in the sense that the monomorphism associated to the composition $X\hookrightarrow X'\hookrightarrow \cX$ is the composition of the ones associated to $X\hookrightarrow X'$ and $X'\hookrightarrow \cX$.
\begin{proof}See Proof \ref{proofResidualOfSubspaceAlongIntersection} in Appendix \ref{appendixProofs}.
\end{proof}
\end{prop}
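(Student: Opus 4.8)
The plan is to produce the embedding from the functoriality of the symmetric algebra and of the relative homogeneous spectrum, together with the fact that the latter commutes with base change. Write $J\subseteq\cO_\cX$ for the ideal sheaf of $\cW$ and let $\iota\colon X\hookrightarrow\cX$ denote the given embedding. Following the conventions of Appendix~\ref{appendixIntersections}, the intersection $\cW\cap X\subseteq X$ is the subspace $V(I)$, where $I=J\cdot\cO_X$ is the image of the canonical map $\iota^*J\to\cO_X$. In particular the first step is to record the resulting surjection of $\cO_X$-modules
\[
\iota^*J\twoheadrightarrow I .
\]

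Next I would transport this surjection through the two constructions defining the residual spaces. Because the symmetric algebra is a left adjoint, it preserves surjections and commutes with base change, so the displayed map induces a surjection of graded $\cO_X$-algebras $S(\iota^*J)\twoheadrightarrow S(I)$ and an identification $S(\iota^*J)=\iota^*S(J)$. Passing to relative $\Proj$, a surjection of graded algebras yields a closed embedding, while $\Proj$ commutes with the base change along $\iota$; combining these gives
\[
\Res_{\cW\cap X}X=\Proj_X S(I)\hookrightarrow\Proj_X S(\iota^*J)=\Proj_X\iota^*S(J)=X\times_\cX\Res_\cW\cX .
\]
Composing this closed embedding with the projection $X\times_\cX\Res_\cW\cX\to\Res_\cW\cX$ produces the map in the statement. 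This projection is the base change of $\iota$ along $\Res_\cW\cX\to\cX$, hence a monomorphism, so the composite is again an embedding; and the square commutes by construction, since every arrow produced lies over the base square $X\hookrightarrow\cX$.

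It remains to check uniqueness and functoriality. For uniqueness I would argue that any embedding filling the square has the same composites to $X$ and to $\cX$ as the one just built, so by the universal property of the fibered product it factors through a map $\Res_{\cW\cap X}X\to X\times_\cX\Res_\cW\cX$ over $X$; identifying the target with $\Proj_X\iota^*S(J)$ and using that maps over $X$ into this $\Proj$ are classified by line-bundle quotients of the pulled-back module, one sees the filling must induce the canonical quotient coming from $\iota^*J\twoheadrightarrow I$, which pins it down. The functoriality statement for a tower $X\hookrightarrow X'\hookrightarrow\cX$ then reduces to the observation that the associated surjections of ideals, hence of symmetric algebras, compose, and functoriality of $\Proj$ does the rest. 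I expect the main obstacle to be not the degree-one bookkeeping above but the verification that these properties of $\Proj$ and $S(-)$ --- stability of the surjection-to-closed-embedding principle and compatibility with base change --- hold verbatim for complex spaces rather than schemes, together with the treatment of a non-closed $\iota$, where one should localise so that $X$ is closed in an open subset of $\cX$ and invoke the compatibility of residual spaces with such restrictions.
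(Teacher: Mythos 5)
Your construction is correct and runs on the same engine as the paper's Proof \ref{proofResidualOfSubspaceAlongIntersection}: in both treatments the surjection of ideals $I_\cW\twoheadrightarrow I_{\cW\cap X}$ is promoted to an epimorphism of symmetric algebras, and the embedding of residual spaces is read off by taking relative $\Proj$. The only real difference is the order of factorization. The paper stays algebraic and factors the composite as $S_{\cO_\cX}(I_\cW)\twoheadrightarrow S_{\cO_\cX}(I_{\cW\cap X})\twoheadrightarrow S_{\cO_X}(I_{\cW\cap X})$, using Lemmas \ref{lemMorphExtensionToSym} and \ref{lemMorphSymOverDiffrentBasis}, and takes $\Proj$ once at the end; you base-change first, via $S(\iota^*J)=\iota^*S(J)$, and then surject onto $S(I)$, which realizes the intermediate object geometrically as the fibered product $X\times_\cX\Res_\cW\cX$. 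Your detour through the fibered product buys two things the paper leaves implicit: an explicit handling of a non-closed $\iota$ (a closed embedding into $X\times_\cX\Res_\cW\cX$ followed by the projection, which is the base change of $\iota$ and hence again an embedding), and a natural frame for uniqueness via Lemma \ref{lemMorphismToRelativeProj}. That last step is the one soft spot in your write-up: commutativity of the square only says that a candidate filling lives over $X$, and Lemma \ref{lemMorphismToRelativeProj} then classifies it by \emph{some} invertible quotient of the pullback of $\iota^*J$; your claim that this quotient ``must induce the canonical quotient coming from $\iota^*J\twoheadrightarrow I$'' is asserted rather than argued, and commutativity alone does not visibly force it, so an actual verification (e.g.\ using the embedding hypothesis, or compatibility with the tautological quotient onto $\cO(1)$) is still owed. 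To be fair, the paper's appendix proof is silent on uniqueness altogether, so even with this gloss your proposal establishes at least as much as the published argument.
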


\begin{prop}\label{propResAndIntersection}
Given two embeddings $X_i\hookrightarrow \cX$ and a closed embedding $\cW\hookrightarrow \cX$, let $W_i=\cW\cap X_i$. Inside $\Res_\cW\cX$, we have 
\[\Res_{W_1\cap W_2}(X_1\cap X_2)=\Res_{W_1}X_1\cap \Res_{W_2}X_2.\]
\begin{proof}See Proof \ref{proofResAndIntersection} in Appendix \ref{appendixProofs}.
\end{proof}
\end{prop}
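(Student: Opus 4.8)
The plan is to carry out everything inside the common ambient space $\Res_\cW\cX=\Proj_\cX S(\mathcal I)$, where $\cW=V(\mathcal I)$ and $X_i=V(\mathcal J_i)$, and to prove the two inclusions separately. First I would record that all three spaces occurring in the statement are canonically embedded in $\Res_\cW\cX$ by Proposition \ref{propResidualOfSubspaceAlongIntersection}: for $\Res_{W_i}X_i$ this is immediate, and for $\Res_{W_1\cap W_2}(X_1\cap X_2)$ one uses the identity $W_1\cap W_2=\cW\cap(X_1\cap X_2)$ together with the proposition applied to the embedding $X_1\cap X_2\hookrightarrow\cX$. Since the assertion is local on $\cX$, I would reduce at once to closed embeddings and to a local presentation of $\mathcal I$, so that the whole problem becomes an equality of homogeneous ideal sheaves on $\Proj_\cX S(\mathcal I)$.

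The inclusion $\Res_{W_1\cap W_2}(X_1\cap X_2)\subseteq\Res_{W_1}X_1\cap\Res_{W_2}X_2$ is the easy half and should follow formally from the functoriality part of Proposition \ref{propResidualOfSubspaceAlongIntersection}. Applying that functoriality to the two chains $X_1\cap X_2\hookrightarrow X_i\hookrightarrow\cX$ shows that the embedding of $\Res_{W_1\cap W_2}(X_1\cap X_2)$ into $\Res_\cW\cX$ factors through each $\Res_{W_i}X_i$; since the intersection on the right is by definition the fibered product of the two monomorphisms (see Appendix \ref{appendixIntersections}), these two factorizations combine, by the universal property, into a single morphism into $\Res_{W_1}X_1\cap\Res_{W_2}X_2$, which is the desired inclusion of subspaces.

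For the reverse inclusion I would compute the defining ideals explicitly. The closed embedding $\Res_{W_i}X_i\hookrightarrow\Res_\cW\cX$ is induced by the surjection of graded $\cO_\cX$-algebras $S(\mathcal I)\twoheadrightarrow S(\mathcal I\,\cO_{X_i})$ coming from the restriction $\mathcal I\twoheadrightarrow\mathcal I\,\cO_{X_i}$, whose kernel is the homogeneous ideal generated in degree one by $\mathcal I\cap\mathcal J_i$; in the same way $\Res_{W_1\cap W_2}(X_1\cap X_2)$ is cut out by the ideal generated by $\mathcal I\cap(\mathcal J_1+\mathcal J_2)$. Thus the statement reduces to showing that the two homogeneous ideals generated respectively by $(\mathcal I\cap\mathcal J_1)+(\mathcal I\cap\mathcal J_2)$ and by $\mathcal I\cap(\mathcal J_1+\mathcal J_2)$ define the same closed subscheme of $\Proj_\cX S(\mathcal I)$. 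As the first module is contained in the second, only one containment is in doubt, and I would verify it chart by chart on the standard opens $D_+(\hat s)$, with $s$ a local generator of $\mathcal I$. The decisive tool here is the defining feature of the residual space: on $D_+(\hat s)$ the pullback of $\mathcal I$ is the invertible ideal generated by $\bar s$, so that every $a\in\mathcal I$ satisfies $\bar a=(\hat a/\hat s)\,\bar s$; using this one can clear denominators and rewrite a local section arising from $\mathcal I\cap(\mathcal J_1+\mathcal J_2)$ in terms of the products $c_i s\in\mathcal I\cap\mathcal J_i$ attached to a decomposition $c=c_1+c_2$ with $c_i\in\mathcal J_i$.

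The main obstacle is precisely this last step. At the level of modules the naive identity $\mathcal I\cap(\mathcal J_1+\mathcal J_2)=(\mathcal I\cap\mathcal J_1)+(\mathcal I\cap\mathcal J_2)$ fails in general, because intersection does not distribute over sums, so the two ideals genuinely disagree in low degrees; the real content of the proposition is that this discrepancy becomes irrelevant after passing to $\Proj$, i.e.\ that it is absorbed once one inverts the generators $\hat s$. Making this rigorous is where the work lies: one must show that, after using the relation $\bar a=(\hat a/\hat s)\,\bar s$ to pass to high enough degree, every local generator of the larger ideal is recovered from the smaller one. I expect this saturation argument to be the only non-formal part of the proof, and the reason it is deferred to Appendix \ref{appendixProofs}; the remainder is the bookkeeping of identifying the two ideals and invoking functoriality for the easy inclusion.
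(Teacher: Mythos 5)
Your first paragraph and the easy inclusion are fine, and your identification of the defining ideals is essentially faithful (the kernels should also contain the degree-zero parts $\mathcal J_i$, resp.\ $\mathcal J_1+\mathcal J_2$, but since $a\,\bar c=\overline{ac}$ shows $\mathcal J_i\cdot S(\mathcal I)_+\subseteq\langle \mathcal I\cap\mathcal J_i\rangle$, this does not change the subspace of $\Proj$). The genuine gap is the step you defer to ``saturation'': it is not merely the hard part, it is unprovable as you have set it up, because the mechanism you invoke belongs to blowups, not to residual spaces. The assertion that on $D_+(\hat s)$ the pullback of $\mathcal I$ is the \emph{invertible} ideal generated by $\bar s$ is the universal property of $\Bl_\cW\cX$; on $\Res_\cW\cX=\Proj_\cX S(\mathcal I)$ one only has the invertible quotient $\cO(1)$ of the pulled-back module, and the only relation available is $s\,\bar a=a\,\bar s$, i.e.\ $a=(\hat a/\hat s)\,s$ in $\cO(D_+(\hat s))$. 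This converts degree-one classes into functions, which is the wrong direction for your inclusion: from a decomposition $c=c_1+c_2$ with $c_i\in\mathcal J_i$ you get $c_is\in\mathcal I\cap\mathcal J_i$ and hence that the \emph{scalar} $c=\overline{c_1s}/\hat s+\overline{c_2s}/\hat s$ lies in the small ideal, but what you need is the class $\bar c/\hat s$, and the two differ exactly along $V(s)$, the locus the proposition is about. The pullback of $\mathcal I$ is principal, generated by the function $s$, but $s$ may be a zerodivisor, so you cannot divide to pass from $c$ back to $\bar c/\hat s$.

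In fact the chart-by-chart identity you aim for is false as a purely ideal-theoretic statement, so no formal saturation can close the gap: take $\cX=\C^2$, $\mathcal I=(x)$, $\mathcal J_1=(y)$, $\mathcal J_2=(y-x)$. Then $S(\mathcal I)=\cO_\cX[T]$ and $\Proj_\cX S(\mathcal I)=\cX$; the ideal generated in degree one by $\mathcal I\cap(\mathcal J_1+\mathcal J_2)=(x)$ is $(T)$, which dehomogenizes on $D_+(T)$ to the unit ideal, while $(\mathcal I\cap\mathcal J_1)+(\mathcal I\cap\mathcal J_2)=(xy)+(x(y-x))$ generates $(yT,(y-x)T)=(xT,yT)$, which dehomogenizes to $(x,y)$. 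No saturation identifies $(1)$ with $(x,y)$; to absorb the discrepancy one would have to take a colon by the pullback of $\mathcal I$, as is done in Theorem \ref{thmKrEqualsMr}, but no such colon appears in the present statement. What the example detects is that the configuration $X_1\cap X_2\subseteq\cW$ with $X_i\not\subseteq\cW$ is exactly where the passage from the $I_{W_i}$ to $I_{W_1\cap W_2}$ is delicate. The paper's proof takes an entirely different route that you should imitate if you rework this: it computes no ideals at all, but verifies that $\Res_{W_1\cap W_2}(X_1\cap X_2)$ satisfies the universal property of the intersection (Proposition \ref{propUnivPropIntersectionObject}), using Lemma \ref{lemMorphismToRelativeProj} to translate a compatible pair of maps $P\to\Res_{W_i}X_i$ into an invertible quotient $g_1^*I_{W_1}\twoheadrightarrow\sL_1$ and then, via the surjectivity of $e_1^*$ and the equality $e_1^*I_{W_1}=I_W$, into an invertible quotient of $g^*I_W$, which by Lemma \ref{lemMorphismToRelativeProj} is precisely a morphism $P\to\Res_{W_1\cap W_2}(X_1\cap X_2)$. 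There the symmetric-algebra structure enters through the functor of points of the relative $\Proj$, which is the correct substitute for the invertibility you tried to borrow from the blowup.
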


Let $b\colon \Bl_YX\to X$ be the blowup of $X$ along $Y$. The main property relating residual schemes and blowups is due to Micali \cite{Micali} (see also \cite[Proposition 2.3.1]{KleimanMultiplePointFormulasI}.
\begin{prop}\label{propMicali} Let $Y\subseteq X'\subseteq X$ be closed complex subspaces, with $Y$ regularly embedded in $X$. Then $\Res_{Y}X=\Bl_YX$ and the respective defining ideals satisfy $I_{\Res_YX'}I_{b^{-1}(Y)}=I_{b^{-1}(X')}.$
	\end{prop}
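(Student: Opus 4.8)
The statement has two parts, and I would treat them separately: the first is classical and can essentially be quoted, while the second reduces to a local computation in the standard charts of the blowup.

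For the equality $\Res_Y X=\Bl_Y X$ I would invoke Micali's theorem directly. The residual space is $\Proj_X S(I)$ with $I=I_Y$, whereas the blowup is $\Proj_X R(I)$, with $R(I)=\bigoplus_{n\ge 0}I^n$ the Rees algebra. There is always a canonical surjection $S(I)\to R(I)$ of graded $\cO_X$-algebras, and Micali's theorem asserts that it is an isomorphism precisely because $I$ is locally generated by a regular sequence: for such an ideal the only syzygies are the Koszul ones, which are exactly the relations killed in forming $S(I)$. Taking $\Proj_X$ gives $\Res_Y X=\Bl_Y X$. I would also record here that the exceptional divisor $E=b^{-1}(Y)$ is the effective Cartier divisor cut out by $I_E=I\cdot\cO_{\Bl_Y X}$, locally principal and generated by a nonzerodivisor; this uses the regularity of $Y$ in $X$, and it is what will eventually allow one copy of $E$ to be cancelled.

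For the ideal formula I would argue locally, since all three ideals are coherent and the claimed identity is an equality of ideal sheaves on $\Bl_Y X$. Fix local generators $g_1,\dots,g_c$ of $I$ forming a regular sequence, and generators $h_1,\dots,h_m$ of $J=I_{X'}$; since $Y\subseteq X'$ we have $J\subseteq I$, so I may write $h_k=\sum_l c_{kl}\,g_l$ with $c_{kl}\in\cO_X$. The blowup is covered by the standard affine charts $U_i$ of $\Proj_X S(I)$, in which the Koszul relations become $g_j=g_i t_j$ for the chart coordinates $t_j=T_j/T_i$ (with $t_i=1$). Consequently $I_E|_{U_i}=(g_i)$ and $J\cdot\cO_{U_i}=(h_1,\dots,h_m)=(g_iL_1,\dots,g_iL_m)$, where $L_k=\sum_l c_{kl}\,t_l$.

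It remains to identify $I_{\Res_Y X'}$ in the chart $U_i$. By the functoriality of Proposition \ref{propResidualOfSubspaceAlongIntersection}, the embedding $\Res_Y X'=\Res_{Y\cap X'}X'\hookrightarrow\Res_Y X$ is induced by the surjection of graded algebras $S(I)\to S(\bar I)$, where $\bar I=I/J$ is the ideal of $Y$ in $X'=V(J)$. Passing to the chart, the syzygies of the generators $\bar g_l$ of $\bar I$ over $\cO_{X'}$ are the Koszul syzygies together with the extra degree-one relations $\sum_l c_{kl}T_l=0$ forced by $h_k\in J$; hence the kernel of $\cO_{U_i}\to S(\bar I)_{(T_i)}$ equals $(J,L_1,\dots,L_m)\cdot\cO_{U_i}=(L_1,\dots,L_m)$, the last step because $g_iL_k\in(L_k)$. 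Therefore $I_{\Res_Y X'}|_{U_i}=(L_1,\dots,L_m)$, and
\[
I_{\Res_Y X'}\cdot I_E|_{U_i}=(L_1,\dots,L_m)\cdot(g_i)=(g_iL_1,\dots,g_iL_m)=J\cdot\cO_{U_i}=I_{b^{-1}(X')}|_{U_i},
\]
which yields the claimed identity after gluing over the $U_i$. The main obstacle I anticipate is the bookkeeping in this last step: correctly identifying $I_{\Res_Y X'}$ as the kernel of $S(I)\to S(\bar I)$ and pinning down the extra linear syzygies $L_k$ produced by $J\subseteq I$. Everything else is formal once one knows $S(I)=R(I)$ and that $g_i$ is a nonzerodivisor in each chart; the latter --- equivalently, that $E$ is Cartier --- is exactly where the hypothesis that $Y$ is regularly embedded in $X$, rather than merely in $X'$, is used, and it is what makes the single power of $I_{b^{-1}(Y)}$ on the left-hand side the correct one.
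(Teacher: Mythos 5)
Your proof is correct, but there is nothing in the paper to compare it against: the authors state Proposition \ref{propMicali} without proof, attributing it to Micali \cite{Micali} and to \cite[Proposition 2.3.1]{KleimanMultiplePointFormulasI}. You do the same for the first assertion (quoting Micali's theorem that $S(I)\to R(I)$ is an isomorphism for an ideal generated by a regular sequence), but you supply a genuine, self-contained verification of the ideal formula, and the key step is sound: a syzygy $\sum_l a_l\bar g_l=0$ over $\cO_{X'}$ lifts to $\sum_l a_lg_l=\sum_k b_kh_k$ in $\cO_X$, so $(a_l)-\sum_k b_k(c_{kl})$ is a syzygy of the regular sequence $(g_l)$ and hence Koszul; this is exactly what justifies your claim that the linear part of the kernel of $S(I)\to S_{\cO_{X'}}(\bar I)$ is generated by the Koszul forms together with the $L_k^T=\sum_l c_{kl}T_l$, and it is the one place where regularity of $Y$ in $X$ (rather than in $X'$) enters the identification $I_{\Res_YX'}|_{U_i}=(L_1,\dots,L_m)$. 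Your chart bookkeeping is in the same spirit as the paper's own Appendix computation for Lemma \ref{LemaKleiman} (Proof \ref{proofLemaKleiman}), which likewise presents the blowup by the $2\times 2$ minors of the matrix of the $g_l$ and the homogeneous coordinates and works chart by chart; your identification of $\Res_YX'$ inside $\Res_YX$ via the surjection of symmetric algebras is also the mechanism of the paper's Proof \ref{proofResidualOfSubspaceAlongIntersection}. One small imprecision: the product identity $(L_1,\dots,L_m)\cdot(g_i)=(g_iL_1,\dots,g_iL_m)$ holds formally and does not use that $g_i$ is a nonzerodivisor; that hypothesis is what upgrades the product formula to the colon-ideal form $\Res_YX'=b^{-1}(X'):b^{-1}(Y)$ of Remark \ref{remResSubspaceRegEmb}, so your closing remark slightly misplaces where the Cartier property is consumed, but this does not affect the correctness of the argument.
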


	\begin{rem}\label{remResSubspaceRegEmb}
Taking into account that $b^{-1}(Y)$ is a Cartier divisor,  the previous property can be restated as follows: if $Y$ is regularly embedded in $X$, then	
\[\Res_Y X'=b^{-1}(X'):b^{-1}(Y).\] 
	\end{rem}

	\begin{lem}\label{LemaKleiman}
Let $X\hookrightarrow \cX$ and let $\cW$ be a closed regularly embedded subspace of $\cX$. If $\cW\cap X$ is regularly embedded in $X$, then 
\[
\Res_{\cW\cap X} X=b_{\cW}^{-1}(X):b_\cW^{-1}(\cW)=b_{\cW\cap X}^{-1}(Z):b_{\cW\cap X}^{-1}(\cW\cap X),
\]
for the blowup maps $b_\cW\colon \Bl_\cW \cX\to \cX$ and $b_{\cW\cap X}\colon \Bl_{\cW\cap X}\cX\to \cX$.
\begin{proof}See Proof \ref{proofLemaKleiman} in Appendix \ref{appendixProofs}.
\end{proof}
	\end{lem}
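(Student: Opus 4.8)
The plan is to show that each of the two ideal quotients appearing on the right computes, inside the relevant blowup of $\cX$, the strict transform of $X$, and that under the regular embedding hypotheses this strict transform is canonically the space $\Res_{\cW\cap X}X$. Since residual spaces, blowups and ideal quotients are all constructed locally on $\cX$, I would first reduce to a local statement and work in a chart (or on stalks) where $I_\cW$ and $I_{\cW\cap X}$ are given by explicit generators, with $\cW$ regularly embedded in $\cX$ and $\cW\cap X$ regularly embedded in $X$.

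The point is to isolate a single claim and apply it twice. The claim is: if $b\colon\Bl_C\cX\to\cX$ is the blowup of $\cX$ along a closed subspace $C$ whose trace $C\cap X=V(I_C\cO_X)$ is regularly embedded in $X$, then the strict transform of $X$ equals the single ideal quotient $b^{-1}(X):b^{-1}(C)$ and is canonically isomorphic to $\Res_{C\cap X}X$. Granting this, the Lemma follows by taking $C=\cW$, where $C\cap X=\cW\cap X$ produces the middle term, and $C=\cW\cap X$, where again $C\cap X=\cW\cap X$ produces the last term; in both cases the trace on $X$ is the regularly embedded subspace $\cW\cap X$, which is exactly the standing hypothesis, so both expressions reduce to $\Res_{\cW\cap X}X$.

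To prove the claim I would first identify the embedded residual with the strict transform. Since $C\cap X$ is regularly embedded in $X$, Micali's result (Proposition \ref{propMicali}) gives $\Res_{C\cap X}X=\Bl_{C\cap X}X$, the blowup of $X$ along $C\cap X$, which is precisely the strict transform of $X$ inside $\Bl_C\cX$. For $C=\cW$ I would place this copy in $\Bl_\cW\cX$ by combining Proposition \ref{propResidualOfSubspaceAlongIntersection}, which embeds $\Res_{\cW\cap X}X$ in $\Res_\cW\cX$, with the identification $\Res_\cW\cX=\Bl_\cW\cX$ coming from $\cW$ being regularly embedded in $\cX$; for $C=\cW\cap X$ the strict transform already lives in $\Bl_{\cW\cap X}\cX$ by definition. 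In each case the exceptional locus $E=b^{-1}(C)$ is a Cartier divisor, so $b^{-1}(X):b^{-1}(C)=V\!\big(I_X\cO:I_E\big)$ is a legitimate ``division by $E$,'' and the remaining step is to match the strict transform with this single quotient.

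The hard part will be precisely this matching. In general the strict transform is the saturation $I_X\cO:I_E^{\infty}$, and one ideal quotient need not exhaust it; I expect the regular embedding of $\cW\cap X$ in $X$ to be exactly what forces $E$ to occur with multiplicity one in the total transform along the strict transform, so that a single quotient already removes all of $E$. I would verify this by the local computation comparing the symmetric algebra $S(I_{\cW\cap X})$ over $\cO_X$, which by regularity is a polynomial algebra, with the quotient of the Rees algebra of $\cX$ along $C$, using the ideal relation of Proposition \ref{propMicali} as the source of the multiplicity-one statement. A secondary subtlety, occurring only for the middle term, is that the blowup center $\cW$ is \emph{not} contained in $X$, so Remark \ref{remResSubspaceRegEmb} cannot be invoked directly; the resolution is that the strict transform of $X$ depends on $\cW$ only through its trace $I_\cW\cO_X=I_{\cW\cap X}$, so the universal property of the blowup shows that the description obtained from the larger center $\cW$ agrees with the one obtained from $\cW\cap X$, yielding the second equality.
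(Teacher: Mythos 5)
Your skeleton is reasonable, and two of its three reductions are unproblematic: since $\cW\cap X$ is regularly embedded in $X$, Proposition \ref{propMicali} identifies $\Res_{\cW\cap X}X$ with $\Bl_{\cW\cap X}X$, i.e.\ with the strict transform of $X$ in either blowup; and your identification of the third term is essentially Remark \ref{remResSubspaceRegEmb} applied to $\cW\cap X\subseteq X\subseteq \cX$, which is exactly how the paper obtains $\Res_{\cW\cap X}X=b_{\cW\cap X}^{-1}(X):b_{\cW\cap X}^{-1}(\cW\cap X)$ in a single line (you correctly read the ``$Z$'' in the statement as a typo for $X$, a leftover of the appendix notation, where the roles are $Y=\cW$, $Z=X$ and the ambient is $\cX$).

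The genuine gap is at the point you yourself flag as the hard part, and your proposed resolution does not close it. For the middle term you argue that ``the strict transform of $X$ depends on $\cW$ only through its trace, so the universal property of the blowup shows the two descriptions agree.'' The universal property identifies the two strict transforms as abstract spaces over $\cX$, but it says nothing about the ideal quotient $b_\cW^{-1}(X):b_\cW^{-1}(\cW)$, which lives in a different blowup with a different exceptional divisor and could a priori differ from the strict transform by components or embedded structure supported in $b_\cW^{-1}(\cW)$ --- this is precisely the single-quotient-versus-saturation issue you raise. Worse, the tool you cite as the ``source of the multiplicity-one statement,'' the ideal relation of Proposition \ref{propMicali}, requires the center to be contained in the subspace ($Y\subseteq X'$ in its hypotheses), and this fails in exactly the problematic case, since $\cW\not\subseteq X$. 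The paper's proof is, in substance, nothing but the computation your outline defers: it interprets the hypotheses locally as a regular sequence $g_1,\dots,g_k$ generating $I_\cW$ which extends to a regular sequence $g_1,\dots,g_k,h_1,\dots,h_r$ generating $I_{\cW\cap X}$, presents both blowups by the $2\times 2$ minors of explicit matrices, computes the two quotients chartwise --- $(A'_1+A'_2):g_i$ in $\Bl_\cW\cX$ against $\left((A''_1+A''_2):g_i\right)+(v_1,\dots,v_r)$ in $\Bl_{\cW\cap X}\cX$ --- checks that the second quotient is empty on the extra charts $\{v_j=1\}$, and glues the resulting chart isomorphisms $(x,[u])\mapsto (x,[u,0])$. (Note the paper never proves ``quotient equals strict transform'' for the middle term directly; it compares the two quotients to each other and identifies only the third with the residual.) Without carrying out some version of this local comparison, your proposal establishes the first and third terms but not the middle equality.
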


									\section{The multiple point spaces $K_r$}\label{secDefMultPoints}
			\begin{definition} Given a map $f\colon X\to Y$ between complex spaces, the \emph{iterated multiple point spaces} $K_r$ are defined as follows: Let $K_0=Y$ and $K_1=X$, so that $f$ is a morphism $K_1\to K_0$. For $r\ge 2$, we define
\[K_{r}=\Res_{\Delta K_{r-1}}\left(K_{r-1}\times_{K_{r-2}}K_{r-1}\right)\]
and let $K_r\to K_{r-1}$ be the map obtained by composition of  $K_r\to K_{r-1}\times_{K_{r-2}}K_{r-1}$ 
with the first projection $K_{r-1}\times_{K_{r-2}}K_{r-1}\to K_{r-1}$.
In order to distinguish multiple point spaces of different maps, we sometimes write $K_r(f)=K_r$ .
	\end{definition}

By construction, the multiple point spaces satisfy the \emph{iteration principle:}

\begin{prop}\label{iteration}
For any map between complex spaces, and any integers $r,r'\geq 0$ and $s,s'\geq 1$, such that $r+s=r'+s'$, the multiple points satisfy
	\[K_r\left(K_s\to K_{s-1}\right)=K_{r'}\left(K_{s'}\to K_{s'-1}\right).\]
\end{prop}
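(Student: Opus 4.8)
The plan is to reduce the two-parameter identity to a single ``shift'' formula and then prove that formula by a short induction. Writing $g_s\colon K_s\to K_{s-1}$ for the structure map of the original tower, the quantity $K_r(K_s\to K_{s-1})$ is by definition $K_r(g_s)$, the $r$-th iterated multiple point space of $g_s$. I would first establish the following claim: for all $r\ge 0$ and $s\ge 1$ one has $K_r(g_s)=K_{r+s-1}$, and moreover for $r\ge 1$ the structure map $K_r(g_s)\to K_{r-1}(g_s)$ is identified with $K_{r+s-1}\to K_{r+s-2}$. Granting this, the proposition is immediate: the hypothesis $r+s=r'+s'$ gives $r+s-1=r'+s'-1$, so
\[
K_r(K_s\to K_{s-1})=K_r(g_s)=K_{r+s-1}=K_{r'+s'-1}=K_{r'}(g_{s'})=K_{r'}(K_{s'}\to K_{s'-1}).
\]

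To prove the claim I would induct on $r$, being careful to carry along the structure maps and not merely the underlying spaces. The base cases are $r=0$, where $K_0(g_s)$ is the target $K_{s-1}$ of $g_s$, and $r=1$, where $K_1(g_s)$ is the source $K_s$ and the map $K_1(g_s)\to K_0(g_s)$ is $g_s\colon K_s\to K_{s-1}$ itself; these agree with $K_{s-1}$, $K_s$ and the map $K_s\to K_{s-1}$ of the original tower. For the inductive step, fix $r\ge 2$ and assume the claim at levels $r-1$ and $r-2$. By definition,
\[
K_r(g_s)=\Res_{\Delta K_{r-1}(g_s)}\bigl(K_{r-1}(g_s)\times_{K_{r-2}(g_s)}K_{r-1}(g_s)\bigr).
\]
The inductive hypothesis identifies $K_{r-1}(g_s)$ with $K_{r+s-2}$, $K_{r-2}(g_s)$ with $K_{r+s-3}$, and the map between them with $K_{r+s-2}\to K_{r+s-3}$. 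Since the right-hand side depends on this data only through the fibered product, its diagonal, and the residual construction, substituting yields exactly $\Res_{\Delta K_{r+s-2}}\bigl(K_{r+s-2}\times_{K_{r+s-3}}K_{r+s-2}\bigr)=K_{r+s-1}$. The structure map $K_r(g_s)\to K_{r-1}(g_s)$ is the canonical map to the fibered product composed with the first projection; as both of these are identified with their counterparts for the original tower, it coincides with $K_{r+s-1}\to K_{r+s-2}$, closing the induction.

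The construction is essentially tautological once phrased correctly, so the only real point requiring attention---and the part I would take care to state precisely---is the bookkeeping of the structure maps through the induction. The definition of $K_r$ at each stage consumes exactly the datum of the map $K_{r-1}\to K_{r-2}$ (together with the diagonal and fibered product it determines), and nothing else; hence an inductive hypothesis phrased purely at the level of spaces would be too weak, whereas one phrased at the level of maps makes the inductive step forced. I expect no genuine obstacle beyond this, since every ingredient of the recursion is canonical.
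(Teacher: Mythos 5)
Your proof is correct and matches the paper's reasoning: the paper asserts this proposition ``by construction'' without writing out a proof, and your induction on $r$ establishing $K_r(g_s)=K_{r+s-1}$ (with the structure maps identified) is exactly the elaboration of that tautology. Your emphasis on carrying the structure maps---not just the spaces---through the induction is the one genuinely necessary precaution, and you handle it properly.
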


Away from the preimage of the diagonal, the double point space $K_2$ is isomorphic to $X\times_Y X$, and measures the failure of injectivity of $f$. What goes on over the diagonal is a more delicate matter, specially if we allow $X$ to be singular. 

\begin{prop}
A morphism $X\to Y$ is an embedding if and only if $K_2=\emptyset$. In this case, all higher $K_r$ are also empty.
\begin{proof} A morphism $X\to Y$ is an embedding if and only if the embedding $\Delta X\hookrightarrow X\times_Y X$ is an isomorphism. In turn, this is equivalent to $\Res_{\Delta X}X\times_Y X=\emptyset$. For $r> 2$,  the space $K_r$ is the double point space of $\emptyset=K_{r-1} \to K_{r-2}$. This is an embedding, hence $K_r=\emptyset$.
\end{proof}
\end{prop}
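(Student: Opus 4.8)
The plan is to reduce the equivalence to two ingredients and then run a short induction. The first ingredient is the categorical fact that, for any morphism $f\colon X\to Y$, the relative diagonal realizes $X$ as a closed subspace $\Delta X\subseteq X\times_Y X$, and that $f$ is an embedding if and only if this inclusion is an isomorphism, i.e. $\Delta X=X\times_Y X$ as closed subspaces. The forward implication is immediate: an embedding is a monomorphism, so the two projections from the fibered product are equalized by $f$ and hence coincide, which forces the diagonal section to be invertible. The converse, that invertibility of the diagonal forces $f$ to be an embedding, is the standard characterization of monomorphisms by their relative diagonal, adapted to the paper's conventions; here I would take care that the identification $\Delta X=X\times_Y X$ is read as an equality of complex subspaces (equality of ideal sheaves), not merely of underlying point sets.

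The second ingredient is an emptiness criterion for residual spaces: for a closed subspace $W=V(I)\subseteq Z$, one has $\Res_W Z=\emptyset$ if and only if $I=0$, that is $W=Z$. This follows directly from the definition $\Res_W Z=\Proj_Z S(I)$, using that $\Proj$ and the symmetric algebra commute with base change. The fibre of the structure map over a point $z\in Z$ is the projectivization of the finite-dimensional vector space $I\otimes k(z)$ (the fibre of the coherent ideal sheaf $I$ at $z$), so it is nonempty exactly when $I\otimes k(z)\ne 0$; by Nakayama and the coherence of $I$ this fails for every $z$ if and only if $I=0$. Applying this with $Z=X\times_Y X$ and $W=\Delta X$, we obtain that $K_2=\Res_{\Delta X}(X\times_Y X)=\emptyset$ if and only if $\Delta X=X\times_Y X$, which by the first ingredient is equivalent to $f$ being an embedding. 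This establishes the main equivalence.

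For the last assertion I would induct on $r\ge 2$, using that, straight from the definition, $K_r$ is the double point space of the structure map $K_{r-1}\to K_{r-2}$ (a special case of the iteration principle, Proposition~\ref{iteration}). The base case $r=2$ is the hypothesis $K_2=\emptyset$. If $K_{r-1}=\emptyset$, then the map $K_{r-1}\to K_{r-2}$ is the inclusion of the empty subspace $\emptyset=V(1)$, which is a closed embedding; by the equivalence already proved, its double point space $K_r$ is empty. One may argue even more directly, observing that $K_{r-1}=\emptyset$ forces $K_{r-1}\times_{K_{r-2}}K_{r-1}=\emptyset$ and hence $K_r=\Res_\emptyset\emptyset=\emptyset$.

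Since both the diagonal characterization and the inductive step are essentially formal, I expect the only real work to lie in the emptiness criterion of the second paragraph, where the $\Proj$ description of $\Res$ and Nakayama's lemma are genuinely used. The subtle point to get right throughout is that ``$\Delta X\hookrightarrow X\times_Y X$ is an isomorphism'' must be understood at the level of ideal sheaves, so that it matches exactly the condition $I_{\Delta X}=0$ coming from the residual-space computation.
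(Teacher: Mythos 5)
Your proposal is correct and follows essentially the same route as the paper: the characterization of embeddings via the isomorphism $\Delta X\hookrightarrow X\times_Y X$, the equivalence of that isomorphism with $\Res_{\Delta X}(X\times_Y X)=\emptyset$, and the induction identifying $K_r$ as the double point space of the embedding $\emptyset=K_{r-1}\to K_{r-2}$. The only difference is that you supply details the paper leaves implicit --- notably the fibrewise $\Proj$ plus Nakayama argument showing $\Res_W Z=\emptyset$ iff $I_W=0$ --- which is a welcome elaboration rather than a deviation.
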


Computing double points directly from the definition is hard and, as a rule, each step of the iteration process makes the problem harder. What follows is an exception where each step is acomplished by blowing up a manifold along a closed submanifold. 

	\begin{prop}\label{propMultPointsOfSubmersion}
If $f\colon X\to Y$ is a submersion between complex manifolds, then all $K_r$ are smooth, all $K_{r}\to K_{r-1}$ are submersions, and  
\[K_{r}=\Bl_{\Delta K_{r-1}}\left(K_{r-1}\times_{K_{r-2}}K_{r-1}\right).\]
%\begin{proof}  which is also a manifold. It only remains to show that $p\colon K_2\to X$ is a submersion. We recall that $p$ is defined as the composition of the blowup $b\colon K_2\to X\times_Y X$ with the projection onto the first component $X\times_Y X\to X$.
%
%Given any point $w_0\in K_2$, let $x_0=p(w_0)\in X$. By the local normal form of a submersion, we can choose charts $\varphi\colon U\to A_1\times A_2$ in $X$ with $x_0\in U$ and $\psi\colon V\to A_1$ in $Y$, with $f(U)=V$ and $A_1\subseteq\C^p$, $A_2\subseteq\C^{n-p}$ open sets, such that
%$$
%\psi\circ f\circ \varphi^{-1}(v,t)=v,\quad \forall (v,t)\in A_1\times A_2.
%$$
%We consider $W=(X\times_Y X)\cap(U\times U)$, which is open in $X\times_Y X$ and the induced chart $\phi\colon W\to A_1\times A_2\times A_2$ given by $\phi(x,x')=(\varphi_1(x),\varphi_2(x),\varphi_2(x'))$, so that $\Delta X\cap W$ is given by the equations $\varphi_2(x)=\varphi_2(x')$. This implies that $b^{-1}(W)$ is biholomorphic to the submanifold $Z$ of $A_1\times A_2\times A_2\times \P^{n-p-1}$ of points $(v,t,t',[u])$, such that $(t'-t)\wedge u=0$. Moreover, such a biholomorphism $\rho\colon b^{-1}(W)\to Z$ makes commutative the following diagram
%$$
%\begin{CD}
%b^{-1}(W) @>p>> U\\
%@VV\rho V @VV\varphi V\\
%Z @>\pi_1>> A_1\times A_2,
%\end{CD}
%$$
%where $\pi_1(v,t,t',[u])=(v,t)$. Since $\pi_1$ is obviously a submersion, then so is $p$ on $b^{-1}(W)\ni w_0$.
%\end{proof}
\begin{proof} By the iteration principle, it suffices to show the result for $r=2$. Since $f$ is a submersion, $\Delta X\subseteq X\times_Y X\subseteq X\times X$ are submanifolds and, by Proposition \ref{propMicali}, $
K_2=\Res_{\Delta X}\left(X\times_Y X\right)=\Bl_{\Delta X}\left(X\times_Y X\right)$.
One checks easily that $K_2\to X$ is submersive.
\end{proof}
	\end{prop}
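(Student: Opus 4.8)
The plan is to argue by induction on $r$, using the iteration principle to reduce every case to the double point space of a submersion between manifolds. The base case $r=2$ will be established as a statement about an \emph{arbitrary} submersion between manifolds; the inductive step then applies this statement to the map $K_{r-1}\to K_{r-2}$, once the induction hypothesis guarantees that this map is again a submersion between manifolds. Concretely, assuming the claim up to $r-1$, the definition (equivalently, the iteration principle) identifies $K_r(f)=K_2(K_{r-1}\to K_{r-2})$, so proving the $r=2$ statement in the generality of any submersion suffices to close the induction.

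For the base case I would first check that $X\times_Y X$ is smooth. It is the preimage $(f\times f)^{-1}(\Delta Y)$ inside $X\times X$, and since $f$ is a submersion so is $f\times f$; hence this preimage of the submanifold $\Delta Y$ is a submanifold. The diagonal $\Delta X$ sits inside $X\times_Y X$ as a closed submanifold, so it is regularly embedded there. Proposition~\ref{propMicali} then applies and yields
\[
K_2=\Res_{\Delta X}(X\times_Y X)=\Bl_{\Delta X}(X\times_Y X).
\]
Since the blowup of a manifold along a submanifold is again a manifold, $K_2$ is smooth.

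It then remains to verify that $K_2\to X$ is a submersion, and this is the one step I expect to require an explicit (though mild) computation, namely on the charts meeting the exceptional divisor, not just on the open part where $K_2\cong X\times_Y X$. Writing $f$ locally as a linear projection $(x_1,\dots,x_n)\mapsto(x_1,\dots,x_p)$, the fiber product $X\times_Y X$ carries coordinates $(x_1,\dots,x_n,w_1,\dots,w_{n-p})$ with $w_j=x'_{p+j}-x_{p+j}$ and center $\Delta X=\{w=0\}$. In the $i$-th blowup chart the coordinates are $(x_1,\dots,x_n,w_i,t_1,\dots,\widehat{t_i},\dots,t_{n-p})$ with $t_j=w_j/w_i$, so the $x_k$ remain free coordinates. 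The composite $K_2\to X\times_Y X\xrightarrow{\mathrm{pr}_1} X$ reads off $(x_1,\dots,x_n)$, a coordinate projection onto free coordinates in every chart, hence a submersion. This is the only place where one must look carefully over the exceptional divisor.

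Finally, for the inductive step I would proceed as follows: assuming the statement up to $r-1$, the spaces $K_{r-2}$ and $K_{r-1}$ are manifolds and $K_{r-1}\to K_{r-2}$ is a submersion. Applying the base case to this submersion gives at once that $K_{r-1}\times_{K_{r-2}}K_{r-1}$ is smooth, that
\[
K_r=K_2(K_{r-1}\to K_{r-2})=\Bl_{\Delta K_{r-1}}\left(K_{r-1}\times_{K_{r-2}}K_{r-1}\right)
\]
is smooth, and that $K_r\to K_{r-1}$ is a submersion. This closes the induction and establishes all three assertions simultaneously.
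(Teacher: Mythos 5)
Your proof is correct and follows essentially the same route as the paper: reduce to the case $r=2$ via the iteration principle (with the induction made explicit, as the paper leaves implicit), establish the base case by observing that $\Delta X\subseteq X\times_Y X$ is a regularly embedded submanifold so that Proposition~\ref{propMicali} identifies $K_2=\Res_{\Delta X}(X\times_Y X)=\Bl_{\Delta X}(X\times_Y X)$, and check submersivity of $K_2\to X$. Your explicit chart computation over the exceptional divisor simply fills in the step the paper dispatches with ``one checks easily.''
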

	The reader may be surprised that we care about multiple points of submersions. After all, multiple points have always been regarded as a tool for the study of finite and generically one-to-one maps. As we will see, submersions do play a role in the study of multiple points of general maps between manifolds. Before being able to compute $K_r$ for possibly non-submersive maps, we need to figure out certain relations between  spaces $K_r=K_r(f)$ and $K'_r=K_r(f')$, in terms of relations satisfied by maps $f\colon X\to Y$ and $f'\colon X'\to Y'$. This is the starting point:

	\begin{prop}\label{propFunctorialityK_k}
\label{propFunctorialityK_kItem1} Every commutative diagram of the form
\[
\begin{tikzcd}
X \arrow{r} \arrow[hookrightarrow]{d}& Y\arrow{d}\\
X'\arrow{r}						& Y' 
\end{tikzcd}
\tag{$D$}\label{D}
\]
 extends uniquely to a  sequence of commutative diagrams 
\[
\begin{tikzcd}
\cdots \arrow{r} &K_3 \arrow{r} \arrow[hookrightarrow]{d}&K_2 \arrow{r} \arrow[hookrightarrow]{d}&X \arrow{r} \arrow[hookrightarrow]{d}& Y\arrow{d}\\
\cdots \arrow{r} & K_3'\arrow{r}&K_2'\arrow{r}	&X'\arrow{r}& Y' 
\end{tikzcd}
\]
satisfying the following properties:
\begin{enumerate}
\item\label{itemCommutativityKr}
The composition $K_r\hookrightarrow K'_r\to K'_{r-1}\times_{K'_{r-2}} K'_{r-1}$ equals the composition $K_r\to K_{r-1}\times_{K_{r-2}} K_{r-1}\hookrightarrow K'_r\to K'_{r-1}\times_{K'_{r-2}} K'_{r-1}$.

\item \label{itemFunctorialityKr} For any commutative diagram
\[\begin{tikzcd}[row sep=1.5em]
X \arrow{r} \arrow[hookrightarrow]{d}& Y\arrow{d}\\
X' \arrow{r} \arrow[hookrightarrow]{d}& Y'\arrow{d}\\
X''\arrow{r}						& Y''
\end{tikzcd}
\]
the embedding $K_r\hookrightarrow K''_r$ is the composition $K_r\hookrightarrow K'_r\hookrightarrow K''_r$.

\item \label{itemIsomorphicKr}
If $X\to X'$ is an isomorphism and $Y\to Y'$ is an embedding, then $K_r\hookrightarrow K'_r$ is an isomorphism.
%\end{enumerate}
\end{enumerate}
\begin{proof} By the iteration principle, it suffices to show the first step of the extension, because every square in the extended diagram
 is of the form (\ref{D}). Notice that the embedding $X\times_YX\hookrightarrow X'\times_{Y'}X,'$ has  $\Delta X$ as the preimage of $\Delta X'$. Moreover, the construction of this embedding is functorial in the same sense of Item \ref{itemFunctorialityKr}, and gives an isomorphism if the hypothesis of Item \ref{itemIsomorphicKr} are met. Applying Proposition \ref{propResidualOfSubspaceAlongIntersection}, we obtain the embedding $K_2\hookrightarrow K'_2$, also in a functorial way. Therefore, the resulting embedding also satisfies  Items \ref{itemFunctorialityKr} and \ref{itemIsomorphicKr}.
  \end{proof}
	\end{prop}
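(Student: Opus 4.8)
The plan is to reduce the whole ladder to its bottom rung $K_2\hookrightarrow K_2'$ and then propagate upward by the iteration principle (Proposition \ref{iteration}). Suppose the base case produces, from a diagram of the form (\ref{D}), an embedding $K_2\hookrightarrow K_2'$ compatible with the structure maps $K_2\to X$ and $K_2'\to X'$, which is precisely Item \ref{itemCommutativityKr} for $r=2$. Then the square
\[
\begin{tikzcd}
K_2 \arrow{r} \arrow[hookrightarrow]{d} & X \arrow[hookrightarrow]{d}\\
K_2' \arrow{r} & X'
\end{tikzcd}
\]
is again of the form (\ref{D}), now with $K_2\to X$ in the role of $f$. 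Feeding it back into the base construction yields $K_3\hookrightarrow K_3'$, and iterating builds the entire diagram; the uniqueness at each stage forces uniqueness of the extension.

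For the base step I would first invoke the universal property of the fibered product on the square (\ref{D}) to obtain a map $X\times_Y X\to X'\times_{Y'}X'$, which is an embedding because $X\hookrightarrow X'$ is. The crux is the \emph{scheme-theoretic} identity $\Delta X=\Delta X'\cap(X\times_Y X)$ inside $X'\times_{Y'}X'$, i.e.\ that $\Delta X$ is the preimage of $\Delta X'$. Set-theoretically a pair $(x_1,x_2)\in X\times_Y X$ maps into $\Delta X'$ iff its images agree in $X'$, which happens iff $x_1=x_2$ since $X\hookrightarrow X'$ is a monomorphism; at the level of ideals one checks that the diagonal ideal of $X'$ restricts to the diagonal ideal of $X$, so no thickening appears. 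Granting this, Proposition \ref{propResidualOfSubspaceAlongIntersection} applied with $\mathcal X=X'\times_{Y'}X'$, subspace $X\times_Y X$, and $\mathcal W=\Delta X'$ delivers a unique embedding
\[
K_2=\Res_{\Delta X}(X\times_Y X)\hookrightarrow\Res_{\Delta X'}(X'\times_{Y'}X')=K_2'
\]
commuting with the residual structure maps, which is Item \ref{itemCommutativityKr}.

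The remaining two properties should then fall out formally from the corresponding properties of the two ingredients, the fibered-product embedding and the residual embedding of Proposition \ref{propResidualOfSubspaceAlongIntersection}. For functoriality (Item \ref{itemFunctorialityKr}) I would note that stacking two diagrams of the form (\ref{D}) induces the composite embedding on $X\times_Y X$, by the universal property of the fibered product, and then quote the functoriality clause of Proposition \ref{propResidualOfSubspaceAlongIntersection}, after checking that the diagonals pull back compatibly along the chain. For Item \ref{itemIsomorphicKr}, if $X\to X'$ is an isomorphism and $Y\hookrightarrow Y'$ an embedding, then $Y\hookrightarrow Y'$ being a monomorphism forces $X\times_Y X\to X'\times_{Y'}X'$ to be an isomorphism; since $\Delta X$ still pulls back to $\Delta X'$, Proposition \ref{propResidualOfSubspaceAlongIntersection} makes the residual embedding an isomorphism as well. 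Both properties lift to every $K_r$ along the induction.

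The hard part will be the scheme-theoretic verification that the diagonal pulls back exactly to the diagonal, with no embedded or nilpotent structure, since this is what lets Proposition \ref{propResidualOfSubspaceAlongIntersection} apply and is the one place the hypothesis that $X\hookrightarrow X'$ be an embedding is genuinely used. A secondary, bookkeeping difficulty is ensuring that the compatibility of structure maps (Item \ref{itemCommutativityKr}) is in hand at each stage before re-entering the induction, so that the square fed back in really is of the form (\ref{D}); this is what ties the three properties together and prevents proving them in isolation.
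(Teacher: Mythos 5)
Your proposal is correct and follows essentially the same route as the paper: reduce to the single step $K_2\hookrightarrow K_2'$ via the iteration principle, verify that $\Delta X$ is the preimage of $\Delta X'$ under $X\times_Y X\hookrightarrow X'\times_{Y'}X'$, and then invoke Proposition \ref{propResidualOfSubspaceAlongIntersection} together with its functoriality clause to get Items \ref{itemCommutativityKr}--\ref{itemIsomorphicKr}. The one step you flag as ``hard''---that the diagonal pulls back to the diagonal with no extra scheme structure---needs no ideal-theoretic check: it is a purely formal consequence of $X\hookrightarrow X'$ being a monomorphism and the definition of preimage as a fibered product, recorded as item (3) of the diagonal proposition in Appendix \ref{appendixIntersections}.
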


The simplest diagram (\ref{D}) gives an interesting outcome: for a fixed space $X$, the multiple point spaces of all maps $X\to Y$ can be embedded into unique universal spaces.

	\begin{definition}\label{defUniversal}
The \emph{universal $r$-th multiple point space} of a complex space $X$ is 
\[B_r=K_r(X\to *),\] where $X\to *$ is the constant map. 
In order to distinguish the universal spaces of different complex spaces, we write $B_r(X)=B_r$.
	\end{definition}

	\begin{prop}\label{propUnivMultSpaces}The universal multiple point spaces satisfy the following: 
\begin{enumerate}
\item \label{propUnivMultSpacesItem1}For every map $f\colon X\to Y$, there is a canonical  embedding \[K_r\hookrightarrow B_r.\]
\item \label{propUnivMultSpacesItem2}For every embedding $X\hookrightarrow X'$, there is a canonical embedding
\[B_r(X)\hookrightarrow B_r(X').\] 
\end{enumerate}
The construction of $B_r$ is functorial in the sense that the embeddings in Item \ref{propUnivMultSpacesItem2} commute with compositions. The embeddings of Items \ref{propUnivMultSpacesItem1} and \ref{propUnivMultSpacesItem2}  commute with the morphisms $K_r\to K_{r-1}$, $B_r\to B_{r-1}$ and $B'_r\to B'_{r-1}$, and they are compatible with the structure maps to the fibered products. 
\begin{proof}
Extend, respectively, the following two diagrams:
 \[
\begin{tikzcd}
X \arrow{r} \arrow[equal]{d}& Y\arrow{d}\\
X\arrow{r}						& * 
\end{tikzcd}
\qquad\qquad\begin{tikzcd}
X \arrow{r} \arrow[hookrightarrow]{d}& *\arrow{d}\\
X'\arrow{r}						& * 
\end{tikzcd}
\qedhere\]
\end{proof}
	\end{prop}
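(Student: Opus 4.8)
The plan is to realise both embeddings as instances of the functorial extension furnished by Proposition \ref{propFunctorialityK_k}, applied to two carefully chosen diagrams of the form (\ref{D}). The guiding observation is that $B_r=K_r(X\to *)$ is simply the multiple point space of the constant map, so every assertion reduces to the general functoriality already established, and no new geometric input is needed.

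For Item \ref{propUnivMultSpacesItem1}, I would take the left-hand square, in which the vertical arrow $X\to X$ is the identity (in particular a closed embedding), the top arrow is $f$, the bottom arrow is the constant map $X\to *$, and the right arrow is the unique map $Y\to *$. This is a legitimate diagram (\ref{D}), and its unique extension is a commutative ladder whose $r$-th vertical arrow is exactly the sought canonical embedding $K_r=K_r(f)\hookrightarrow K_r(X\to *)=B_r$. Canonicity is precisely the uniqueness clause of Proposition \ref{propFunctorialityK_k}.

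For Item \ref{propUnivMultSpacesItem2}, I would take the right-hand square, in which the left arrow is the given embedding $X\hookrightarrow X'$, the top and bottom arrows are the constant maps, and the right arrow is the identity on $*$. Again this is an instance of (\ref{D}), whose extension produces the canonical embedding $B_r(X)=K_r(X\to *)\hookrightarrow K_r(X'\to *)=B_r(X')$. The functoriality of this construction — that the embeddings associated to $X\hookrightarrow X'$ and $X'\hookrightarrow X''$ compose to the one associated to $X\hookrightarrow X''$ — follows by stacking the two constant-map squares vertically and invoking Item \ref{itemFunctorialityKr} of Proposition \ref{propFunctorialityK_k}.

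Finally, the compatibilities recorded in the last sentence require no further argument. Because the extensions produced by Proposition \ref{propFunctorialityK_k} are commutative ladders whose horizontal arrows are precisely the structure morphisms $K_r\to K_{r-1}$ (and $B_r\to B_{r-1}$, $B_r'\to B_{r-1}'$), the embeddings of both items automatically commute with these morphisms; compatibility with the structure maps to the fibered products is exactly Item \ref{itemCommutativityKr} of the same proposition. I do not expect a genuine obstacle here, since all the content sits in Proposition \ref{propFunctorialityK_k}. The only points demanding care are purely bookkeeping: verifying that the identity arrow on the left of the first square and the identity arrow on the right of the second square are admissible instances of (\ref{D}), and noting that it is the uniqueness of the extension that makes the word ``canonical'' meaningful.
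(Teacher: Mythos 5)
Your proposal is correct and is essentially the paper's own proof: the authors also obtain both embeddings by extending exactly these two squares via Proposition \ref{propFunctorialityK_k}, with functoriality and the compatibilities read off from Items \ref{itemFunctorialityKr} and \ref{itemCommutativityKr} of that proposition. The only difference is that you spell out the bookkeeping the paper leaves implicit.
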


	\begin{prop}\label{propK_rProjection}For the projection $T\times X\to T$, we have $K_r=T\times B_r(X)$, and $K_r\to K_{r-1}$ is the product of the identity $T\to T$ and $B_r(X)\to B_{r-1}(X)$.
\begin{proof}
Letting $B_r=B_r(X)$, the claim follows inductively from the isomorphisms $(T\times B_{r-1})\times_{(T\times B_{r-2})}(T\times B_{r-1})\cong T\times (B_{r-1}\times_{B_{r-2}}B_{r-1})$ and  $\Res_{T\times \Delta B_{r-1}}(T\times B_{r-1}\times_{B_{r-2}}B_{r-1})\cong T\times\Res_{\Delta B_{r-1}}(B_{r-1}\times_{B_{r-2}}B_{r-1}).$ 
\end{proof}
	\end{prop}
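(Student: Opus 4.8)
The plan is to proceed by induction on $r$, exploiting the inductive definition of the multiple point spaces. The base cases are immediate: for the projection $f\colon T\times X\to T$ we have $K_0=T=T\times B_0(X)$, since $B_0(X)=*$, and $K_1=T\times X=T\times B_1(X)$, since $B_1(X)=X$; in both cases the identification of the structure maps is trivial.

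For the inductive step I would assume the isomorphisms $K_{r-1}\cong T\times B_{r-1}$ and $K_{r-2}\cong T\times B_{r-2}$, compatibly with the maps to the next level, and feed them into the defining formula $K_r=\Res_{\Delta K_{r-1}}(K_{r-1}\times_{K_{r-2}}K_{r-1})$. The argument then splits into two independent reductions. First I would establish the natural isomorphism of fibered products
\[(T\times B_{r-1})\times_{(T\times B_{r-2})}(T\times B_{r-1})\cong T\times(B_{r-1}\times_{B_{r-2}}B_{r-1}),\]
which is a formal consequence of the universal property of the fibered product, once one observes that the transition maps $T\times B_{r-1}\to T\times B_{r-2}$ are products of $\mathrm{id}_T$ with $B_{r-1}\to B_{r-2}$ by induction. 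Under this isomorphism the diagonal $\Delta K_{r-1}=\Delta(T\times B_{r-1})$ is carried to $T\times\Delta B_{r-1}$, as one checks pointwise.

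The second and more substantial reduction is that the residual space commutes with the product by $T$, that is,
\[\Res_{T\times\Delta B_{r-1}}\bigl(T\times(B_{r-1}\times_{B_{r-2}}B_{r-1})\bigr)\cong T\times\Res_{\Delta B_{r-1}}(B_{r-1}\times_{B_{r-2}}B_{r-1})=T\times B_r(X).\]
Here I expect the main obstacle, since it requires unwinding the construction of $\Res$ as the relative $\Proj$ of a symmetric algebra. Writing $\pi$ for the projection $T\times(B_{r-1}\times_{B_{r-2}}B_{r-1})\to B_{r-1}\times_{B_{r-2}}B_{r-1}$, the key points are that $\pi$ is flat, that the ideal sheaf of $T\times\Delta B_{r-1}$ is the pullback $\pi^*I_{\Delta B_{r-1}}$, that the symmetric algebra functor commutes with this pullback, and that relative $\Proj$ commutes with the flat base change along $\pi$. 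Combining these identifications yields the desired isomorphism, and chaining it with the first reduction gives $K_r\cong T\times B_r(X)$.

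Finally I would verify the claim about the structure maps. By definition $K_r\to K_{r-1}$ is the composite of the residual structure map $K_r\to K_{r-1}\times_{K_{r-2}}K_{r-1}$ with the first projection; tracking this composite through the two isomorphisms above, and using that the residual structure map is the one induced on $\Proj$ by the base change, shows that it coincides with $\mathrm{id}_T\times\bigl(B_r(X)\to B_{r-1}(X)\bigr)$. This compatibility of the structure maps is precisely what keeps the induction running into the next step.
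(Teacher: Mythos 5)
Your proof is correct and takes essentially the same route as the paper's: the same induction on $r$, resting on the two isomorphisms $(T\times B_{r-1})\times_{(T\times B_{r-2})}(T\times B_{r-1})\cong T\times(B_{r-1}\times_{B_{r-2}}B_{r-1})$ and $\Res_{T\times\Delta B_{r-1}}(T\times B_{r-1}\times_{B_{r-2}}B_{r-1})\cong T\times\Res_{\Delta B_{r-1}}(B_{r-1}\times_{B_{r-2}}B_{r-1})$. You simply make explicit what the paper leaves implicit, namely the base cases, the flat base-change argument identifying the pulled-back ideal and symmetric algebra, and the verification that the structure maps match.
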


\begin{prop} \label{propBrForManifolds}
The universal multiple point spaces of a complex manifold $X$ are the blowups $B_r=\Bl_{\Delta B_{r-1}}(B_{r-1}\times_{B_{r-2}}B_{r-1}).$
\begin{proof}
This is a particular case of Proposition \ref{propMultPointsOfSubmersion}, because $X\to *$ is a submersion.
\end{proof}
\end{prop}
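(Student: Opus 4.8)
The plan is to recognize this as the special case of Proposition \ref{propMultPointsOfSubmersion} where the target is a single point. First I would verify that the constant map $X\to *$ is a submersion between complex manifolds: since $*$ is zero-dimensional, the differential $d(X\to *)_x$ lands in the trivial vector space and is surjective at every $x\in X$, so the map is submersive. This is the only hypothesis of Proposition \ref{propMultPointsOfSubmersion} that needs checking, and it is immediate.

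Next I would simply invoke Proposition \ref{propMultPointsOfSubmersion} for this submersion: it gives that all $K_r(X\to *)$ are smooth, all $K_r\to K_{r-1}$ are submersions, and $K_r=\Bl_{\Delta K_{r-1}}(K_{r-1}\times_{K_{r-2}}K_{r-1})$. Since $B_r=K_r(X\to *)$ by Definition \ref{defUniversal}, replacing $K_j$ by $B_j$ throughout yields the asserted blowup formula for $r\ge 2$; the base cases $B_0=*$ and $B_1=X$ come directly from $K_0=Y=*$ and $K_1=X$.

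Because the whole argument reduces to Proposition \ref{propMultPointsOfSubmersion}, there is essentially no obstacle at this stage. The genuine work lives one level down, inside that proposition: were I to argue from scratch, the hard part would be an induction showing simultaneously that each $B_{r-1}$ is smooth and that the diagonal $\Delta B_{r-1}$ is regularly embedded in the fibered product $B_{r-1}\times_{B_{r-2}}B_{r-1}$. Given this regularity, Micali's theorem (Proposition \ref{propMicali}) identifies the residual space $\Res_{\Delta B_{r-1}}(B_{r-1}\times_{B_{r-2}}B_{r-1})$ with the blowup, and the smoothness of the fibered product would then propagate the inductive hypothesis to the next stage.
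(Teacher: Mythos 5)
Your proposal is correct and takes exactly the same route as the paper: the proof there is the one-line observation that $X\to *$ is a submersion, so Proposition \ref{propMultPointsOfSubmersion} applies with $Y=*$ and $B_r=K_r(X\to *)$. Your closing paragraph about the ``work one level down'' is a reasonable gloss, though note the paper handles Proposition \ref{propMultPointsOfSubmersion} not by a simultaneous induction on smoothness and regular embedding but by using the iteration principle to reduce to the single case $r=2$, where Micali's theorem (Proposition \ref{propMicali}) applies directly.
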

By abuse of notation, all these blowup maps are written as
\[b\colon B_r\to B_{r-1}\times_{B_{r-2}}B_{r-1}\]
and all their exceptional divisors as $E=b^{-1}(\Delta B_{r-1})$.

\begin{ex}\label{exUniversalDoubleCn}
The universal double point space of $\C^n$ can be described, globally, as the set
\[B_2(\C^n)=\{(x,x',u)\in \C^n\times \C^n\times\P^{n-1}\mid u\wedge(x'-x)=0\}.\]
\end{ex}
Now consider a map $f\colon \C^n\to Y$, with $Y$ a manifold. Once embedded in $B_2(\C^n)$, the double point space $K_2$ of $f$ has a nice geometric interpretation, going back to work of Ronga  \cite{Ronga1972La-classe-duale}.

\begin{prop}\label{propK2ForSmooth}
As a set, the space $K_2$ of a map $f\colon \C^n\to Y$ between manifolds consists of the following points:
\begin{enumerate}
\item Strict points $(x,x',[x'-x])$, with $x\neq x'$ and $f(x)=f(x')$,  
\item Diagonal points $(x,x,u)$, with $u\in \P(\ker df_x)$.
\end{enumerate}
\end{prop}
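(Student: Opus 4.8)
The plan is to realize $K_2$ explicitly inside the smooth universal space $B_2(\C^n)$ and read off its points from local defining equations. By Proposition \ref{propUnivMultSpaces} the space $K_2$ embeds canonically in $B_2=B_2(\C^n)$, whose points are the triples $(x,x',u)$ with $u\wedge(x'-x)=0$ (Example \ref{exUniversalDoubleCn}). Since $\Delta X$ is a submanifold of $X\times X=\C^n\times\C^n$, it is regularly embedded there, so Remark \ref{remResSubspaceRegEmb} applies with $X'=X\times_Y X$ and gives
\[K_2=\Res_{\Delta X}(X\times_Y X)=b^{-1}(X\times_Y X):E,\]
where $b\colon B_2\to X\times X$ is the blowup map and $E=b^{-1}(\Delta X)$ is the exceptional divisor. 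It then suffices to compute this residual in local charts.

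Next I would set up coordinates on the chart $U_1\subseteq B_2$ where $u_1\ne0$: writing $u=(1,u_2,\dots,u_n)$ and $t=x_1'-x_1$, the condition $u\wedge(x'-x)=0$ forces $x'=x+tu$, so $(x,t,u_2,\dots,u_n)$ are coordinates and $E=\{t=0\}$. The subspace $X\times_Y X$ is cut out in $X\times X$ by $f_i(x')-f_i(x)=0$ for $i=1,\dots,p$, hence $b^{-1}(X\times_Y X)$ is defined by $f_i(x+tu)-f_i(x)=0$. By Hadamard's lemma each of these factors as
\[f_i(x+tu)-f_i(x)=t\,g_i(x,t,u),\qquad g_i(x,0,u)=\bigl(df_x(u)\bigr)_i,\]
the $g_i$ being the divided differences. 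The remaining charts $U_k$ are treated identically by symmetry.

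The heart of the argument is the colon computation. Since $B_2$ is smooth, its local rings are integral domains, so $t$ is a nonzerodivisor; hence $(tg_1,\dots,tg_p):(t)=(g_1,\dots,g_p)$, and $K_2$ is locally the zero locus $V(g_1,\dots,g_p)$. I expect this passage from the total transform $b^{-1}(X\times_Y X)=V(tg_1,\dots,tg_p)$ to the residual, by dividing out the exceptional factor $t$, to be the only real subtlety, as it is precisely what controls the behaviour over the diagonal (off the diagonal $b$ is an isomorphism and the strict points are immediate).

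Finally I would read off the points of $V(g_1,\dots,g_p)$ in two cases. Where $t\ne0$ one has $x'\ne x$ and $g_i=0\iff f_i(x')=f_i(x)$, together with $u=[x'-x]$; these are the strict points $(x,x',[x'-x])$ with $f(x)=f(x')$. Where $t=0$ one has $x'=x$ and $g_i(x,0,u)=\bigl(df_x(u)\bigr)_i=0$ for all $i$, i.e. $u\in\ker df_x$; these are the diagonal points $(x,x,u)$ with $u\in\P(\ker df_x)$. As every $u\in\P^{n-1}$ lies in some chart $U_k$, this exhausts $K_2$ and proves the claim.
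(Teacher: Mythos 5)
Your proof is correct. The paper itself states Proposition \ref{propK2ForSmooth} without proof, attributing the geometric picture to Ronga; what you have written is essentially the $r=2$ specialization of the machinery the paper develops afterwards. Your identity $K_2=b^{-1}(X\times_Y X):E$ is the case $r=2$ of Theorem \ref{thmKrEqualsMr}, but you obtain it directly from Micali's result (Remark \ref{remResSubspaceRegEmb}), which is legitimate here precisely because $\Delta X$ is regularly embedded in the smooth ambient $X\times X$ — the paper's three-step induction (graph trick, restriction to submersions, Lemma \ref{LemaKleiman}) is only needed for $r\geq 3$, where the intermediate spaces $K_{r-1}\times_{K_{r-2}}K_{r-1}$ need not be smooth and no iteration principle for the colon formula is available a priori. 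Likewise your functions $g_i$ from Hadamard's lemma are exactly the paper's first generalised divided differences $f_{\alpha_1}[x,\gamma^{(1)}]$ of Definition \ref{defGenItDivDiff}, and your colon computation $(tg_1,\dots,tg_p):(t)=(g_1,\dots,g_p)$, using that $t$ is a nonzerodivisor in the local rings of the smooth $B_2$, is the same mechanism as in the proof of Theorem \ref{div-dif}, where $\lambda^{(r-1)}$ is a nonzerodivisor modulo the previous equations. Two small points you glossed over, neither of which is a gap: the claim is set-theoretic and local, so choosing coordinates on $Y$ near a common image value is harmless (pairs with $f(x)\neq f(x')$ simply do not meet $X\times_Y X$); and the embedding $K_2\hookrightarrow B_2$ you compute with is the canonical one of Proposition \ref{propUnivMultSpaces}, since both it and the embedding in Micali's statement arise from Proposition \ref{propResidualOfSubspaceAlongIntersection} and that embedding is unique. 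In short: your route buys a short, self-contained proof of the double point picture without invoking the general theorem, at the cost of working only for $r=2$.
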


We finish the section with an observation on the importance of functoriality. In a diagram of the form  (\ref{D}), the spaces $K_r,B_r$ and $K'_r$ are canonically embedded in $B_r'$; but the functoriality of the involved constructions gives us more: extending the horizontal arrows in the diagram
\[
\begin{tikzcd}[sep=0.36em]
	&X \arrow[rr] \arrow[dl,equal] \arrow[dd,hookrightarrow] 	&	&  Y \arrow[dd] \arrow[dl] \\
 X \arrow[rr,crossing over] \arrow[dd,hookrightarrow]&&  *& \\
	&X' \arrow[rr,swap] \arrow[dl,equal] && Y' \arrow[dl] \\
 X' \arrow[rr] && *\arrow[from=uu,crossing over]
\end{tikzcd}
\] 
we obtain that the compositions $K_r\hookrightarrow  B_r\hookrightarrow  B'_r$ and $K_r\hookrightarrow  K'_r\hookrightarrow  B'_r$ are equal. By the universal property of the intersection of $B_r$  and  $K'_r$ in $B'_r$, we obtain the following result:

\begin{prop}For any diagram of the form  (\ref{D}), there is a unique embedding
\[K_r\hookrightarrow K'_r\cap B_r,\]
compatible with the embeddings $K_r\hookrightarrow K_r'$ and $K_r\hookrightarrow B_r$.
\end{prop}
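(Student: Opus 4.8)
The plan is to leverage the universal property of the intersection, which by the conventions of Appendix \ref{appendixIntersections} is realized as a fibered product in the category of monomorphisms over $B'_r$. The intersection $K'_r\cap B_r$ comes equipped with a universal property: given any object $Z$ together with a monomorphism $Z\hookrightarrow B'_r$ that factors through both $K'_r\hookrightarrow B'_r$ and $B_r\hookrightarrow B'_r$, there is a unique factorization $Z\to K'_r\cap B_r$ compatible with these inclusions. So the entire argument reduces to exhibiting the two compatible factorizations of $K_r\hookrightarrow B'_r$.

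First I would fix, once and for all, the canonical embedding $K_r\hookrightarrow B'_r$ and verify it is well-defined, i.e.\ that the two a priori different descriptions of this embedding agree. On one hand, Proposition \ref{propUnivMultSpaces}\eqref{propUnivMultSpacesItem1} applied to $f'\colon X'\to Y'$ gives $K'_r\hookrightarrow B'_r$, which we precompose with the functorial embedding $K_r\hookrightarrow K'_r$ from Proposition \ref{propFunctorialityK_k} to obtain a map $K_r\hookrightarrow K'_r\hookrightarrow B'_r$. On the other hand, Proposition \ref{propUnivMultSpaces}\eqref{propUnivMultSpacesItem1} applied to $f\colon X\to Y$ gives $K_r\hookrightarrow B_r$, and Proposition \ref{propUnivMultSpaces}\eqref{propUnivMultSpacesItem2} applied to the embedding $X\hookrightarrow X'$ gives $B_r=B_r(X)\hookrightarrow B_r(X')=B'_r$; composing yields $K_r\hookrightarrow B_r\hookrightarrow B'_r$. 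The equality of these two composites is precisely the content established by extending the commutative cube displayed just before the statement: all four spaces $K_r,B_r,K'_r,B'_r$ arise as the $r$-th multiple point spaces of the four maps sitting at the corners, the functoriality of Proposition \ref{propFunctorialityK_k}\eqref{itemFunctorialityKr} guarantees that the two paths through the cube induce the same embedding into $B'_r$. This is the step I expect to be the crux: one must be careful that the multiple-point functor applied to the cube genuinely produces a commutative diagram of embeddings, so that ``composition along the top then down the right'' agrees with ``down the left then along the bottom.'' The cube is set up exactly so that both paths compute $\mathbf{K_r}$ of the same composite arrow in the double arrow category, and functoriality does the rest.

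Once the single embedding $K_r\hookrightarrow B'_r$ is shown to factor simultaneously through $K'_r\hookrightarrow B'_r$ (via the first description) and through $B_r\hookrightarrow B'_r$ (via the second), I would invoke the universal property of the intersection to produce the desired unique embedding
\[
K_r\hookrightarrow K'_r\cap B_r.
\]
That the resulting map is itself a monomorphism follows because its composition with the monomorphism $K'_r\cap B_r\hookrightarrow B'_r$ equals the embedding $K_r\hookrightarrow B'_r$, and a map whose composite with a monomorphism is a monomorphism is itself a monomorphism. Compatibility with the two structure embeddings $K_r\hookrightarrow K'_r$ and $K_r\hookrightarrow B_r$ is built into the universal factorization, and uniqueness is immediate from the uniqueness clause of that same universal property. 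The only genuine work, then, is the verification in the previous paragraph that the cube commutes after applying $\mathbf{K_r}$; everything after that is formal category theory in a setting with fibered products, as developed in Appendix \ref{appendixIntersections}.
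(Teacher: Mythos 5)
Your proposal is correct and follows essentially the same route as the paper: the paper likewise extends the cube diagram preceding the statement, uses the functoriality of Proposition \ref{propFunctorialityK_k} (together with Proposition \ref{propUnivMultSpaces}) to conclude that the compositions $K_r\hookrightarrow B_r\hookrightarrow B'_r$ and $K_r\hookrightarrow K'_r\hookrightarrow B'_r$ coincide, and then invokes the universal property of the intersection (Proposition \ref{propUnivPropIntersectionObject}) to obtain the unique embedding $K_r\hookrightarrow K'_r\cap B_r$. Your additional remark that the factorization is itself a monomorphism (since its composite with the monomorphism $K'_r\cap B_r\hookrightarrow B'_r$ is one) is a correct, if implicit in the paper, finishing touch.
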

To the effect of exploiting functoriality further, it is convenient to add a further layer of formalism.

				\section{Functoriality}\label{secFunctoriality}

 The \emph{arrow category} $A(\cC)$ of a category $\cC$ has as objects the arrows of $\cC$, and as morphisms $f\to f'$ the commutative diagrams of the form
\[\begin{tikzcd}
X \arrow["f"]{r}\arrow{d}& Y\arrow{d}\\
X'\arrow["g"]{r}						& Y'
\end{tikzcd}
\]
 which are simply called squares. The identity square and the composition of squares are defined in the obvious way. We write $A'(\cC)$ and $A''(\cC)$, respectively, for the wide subcategories of $A(\cC)$ (that is, subcategories including all objects but only some of the morphisms) whose squares are, respectively, of the forms
\[\begin{tikzcd}
X \arrow["f"]{r}\arrow[hookrightarrow]{d}& Y\arrow{d}\\
X'\arrow["f'"]{r}						& Y'
\end{tikzcd}
\quad\text{and}\quad\begin{tikzcd}
X \arrow["f"]{r}\arrow[hookrightarrow]{d}& Y\arrow[hookrightarrow]{d}\\
X'\arrow["f'"]{r}						& Y'
\end{tikzcd}
\]

Now let $\cC$ be the category of complex spaces. Thanks to Proposition \ref{propFunctorialityK_k}, we may disguise Kleiman's  multiple point spaces as functors. 

\begin{definition}
 The \emph{double point functor} is
\[ {\bf K_2}\colon A'(\cC)\to A''(\cC),\]
taking a map $X\to Y$ to the map $K_2\to X$. For morphisms, ${\bf K_2}$ is given by
 \[\begin{tikzcd}
X \arrow[]{r}\arrow[hookrightarrow]{d}& Y\arrow{d}\\
X'\arrow[]{r}						& Y'
\end{tikzcd}
\quad\longmapsto\quad
\begin{tikzcd}
K_2 \arrow[]{r}\arrow[hookrightarrow]{d}& X\arrow[hookrightarrow]{d}\\
K'_2\arrow[]{r}						& X'
\end{tikzcd}\]
In accordance with the iteration principle, the \emph{iterated multiple point functors} are defined as
\[{\bf K_r}={\bf K_2}\circ\overset{r-1}{\cdots}\circ {\bf K_2}.\]
\end{definition}
These functors give ${\bf K_r}(f)\in\Hom(K_r,K_{r-1})$, for any map $f\colon X\to Y$. To be consistent with the convention that $K_1=X$ and $K_0=Y$, the functor $\bf{K_1}$ is set to be the identity $A'(\cC)\to A'(\cC)$.

  If a category $\cC$ has fibered products, then $A(\cC),A'(\cC)$ and $A''(\cC)$ have them as well. Indeed, the fibered product $f_1\times _F f_2$ of two arrows $f_i\colon X_i\to Y_i$, each equipped with a square to $F\colon \cX\to \cY$, is the canonical arrow 
\[X_1\times_\cX X_2\to Y_1\times_\cY Y_2,\] 
equipped with the two squares formed by $X_1\times_\cX X_2\to X_i$ and $Y_1\times_\cY Y_2\to Y_i$, for $i=1,2$. These arrows fit together in a commutative cube diagram
\[
\begin{tikzcd}[column sep={3.5em,between origins}, row sep={2.6 em,between origins}]
&Y_1\times_\cY Y_2 \arrow[rr,swap] \arrow[dd] 	&	&  Y_1  \arrow[dd] \\
 X_1\times_\cX X_2 \arrow[rr,crossing over] \arrow[ru,"f_1\times_Ff_2"]\arrow[dd]&&  X_1\arrow[ur]& \\
	&Y_2 \arrow[rr,swap] && \cY  \\
 X_2 \arrow[rr] \arrow[ur]&& \cX\arrow[from=uu,crossing over]\arrow[ur]
\end{tikzcd}
\]
which we call a \emph{cartesian cube}. The universal property of $f_1\times _F f_2$ gives, for any other morphism $P\to Q$, equipped with squares forming such a commutative cube, a unique factorisation through the cartesian cube (see the first diagram in the proof of Theorem \ref{KrCommutesWithFibProdInA'(C)} below). Note that, since the morphisms in $A''(\cC)$ are the monomorphisms in $A(\cC)$, the fibered products in $A''(\cC)$ are the intersections in $A(\cC)$ (see Definition \ref{defIntersectionObject} for the meaning of intersection in this context).
This sets the ground for the key result of this section.
 
 	\begin{thm}\label{KrCommutesWithFibProdInA'(C)}
The multiple point functors commute with fibered products, that is,
\[{\bf K_r}(f_1\times_F f_2)={\bf K_r}(f_1)\times_{{\bf K_r}(F)}{\bf K_r}(f_2).\]
In particular, for all $r\geq 1$, the multiple point spaces satisfy $K_r(f_1\times_F f_2)=K_r(f_1)\cap K_r(f_2),$ the intersection being made inside $K_r(F)$.
\begin{proof}
The statement is trivial for $r=1$ and, by the iteration principle, it suffices to show the claim for $r=2$. We  show that ${\bf K_2}(f_1\times_Ff_2)$ satisfies the universal property of ${\bf K_r}(f_1)\times_{{\bf K_r}(F)}{\bf K_r}(f_2)$. Observe that ${\bf K_r}(f_1)\times_{{\bf K_r}(F)}{\bf K_r}(f_2)$ is a fibered product in $A''(\cC)$, hence an intersection. 

If the maps are $f_i\colon X_i\to Y_i$, endowed with squares to $F\colon \cX\to \cY$, for simplicity we write
\[X=X_1\cap X_2,\quad K=K_2(f_1\times_F f_2),\quad K_i=K_2(f_i)\quad \text{and}\quad \cK=K_2(F).\]
Following Proposition \ref{propUnivPropIntersectionObject}, it suffices to show that, for any $P\to Q$, commuting with the solid arrows in the diagram
\[
\begin{tikzcd}[column sep=0.7em, row sep=0.8em]
&Q\arrow[ddrrrr,crossing over, bend left =50]\arrow[ddrr,dashed]\arrow[ddddrr,bend right=55,swap]\\
P\arrow[ur]\arrow[ddddrr,bend right=55]\arrow[ddrr,dashed, crossing over]\\
&&&X \arrow[rr,swap,hookrightarrow] \arrow[dd,hookrightarrow] 	&	&  X_1  \arrow[dd,hookrightarrow] \\
&& K \arrow[rr,crossing over,hookrightarrow] \arrow[ru]&&  K_1\arrow[ur]\arrow[from=uullll,crossing over, bend left=50]& \\
&&	&X_2 \arrow[rr,swap,hookrightarrow] && \cX  \\
&& K_2 \arrow[rr,hookrightarrow] \arrow[ur]\arrow[from=uu,crossing over,hookrightarrow]&& \cK\arrow[from=uu,crossing over,hookrightarrow]\arrow[ur]
\end{tikzcd}
\]
there exist the indicated dashed arrows, making the following diagrams commutative:
\[
 \begin{tikzcd}[row sep=1.7em,column sep=1em]
P \arrow[rr]	\arrow[d,dashed]&	& Q\arrow[d,dashed] \\
K\arrow[rr]	&&X
\end{tikzcd}
\quad
 \begin{tikzcd}[row sep=1.7em,column sep=0.45em]
P \arrow[rr]	\arrow[dr,dashed]&	& K_1 \\
	& K\arrow[ur,hookrightarrow]&
\end{tikzcd}
 \quad
 \begin{tikzcd}[row sep=1.7em,column sep=0.45em]
Q \arrow[rr]	\arrow[dr,dashed]&	& X_1 \\
	& X\arrow[ur,hookrightarrow]&
\end{tikzcd}
\]

First of all, observe that the commutativity of the rectangular diagram we have to check follows from the commutativity of the other two triangular diagrams: Since $X\hookrightarrow X_1$ is a monomorphism, it suffices to show the equality of the compositions \[P\dasharrow K\to X\hookrightarrow X_1\quad \text{and}\quad P\to Q\dasharrow X\hookrightarrow X_1,\]
which in turn follows from the commutativity of the triangular diagrams and of the top facet of the cube.

The existence of the commuting map $Q\dasharrow X$ follows from the universal property of the intersection $X=X_1\cap X_2$. We are left with the existence of a morphism $P\dasharrow K$ satisfying the triangular commutativity.

The involved double point spaces are
\[K=\Res_{\Delta X}(X\times_Y X),\quad K_i=\Res_{\Delta X_i}(X_i\times_{Y_i} X_i)\ \ \text{and}\ \  \cK=\Res_{\Delta\cX}(\cX\times_\cY\cX).\]
Moreover, inside $\cX\times_\cY\cX$, we have the equalities
\[\Delta X=\Delta X_1\cap\Delta X_2\quad\text{and}\quad \Delta X_i=(X_i\times_{Y_i} X_i)\cap \Delta\cX.\]
Therefore, we may apply Proposition \ref{propResAndIntersection} to obtain that $K=K_1\times_\cK K_2$, hence the desired commuting map exists by the universal property of the fibered product. This finishes the proof of the equality
${\bf K_r}(f_1\times_F f_2)={\bf K_r}(f_1)\times_{{\bf K_r}(F)}{\bf K_r}(f_2)$.

For the claim that $K_r(f_1\times_F f_2)=K_r(f_1)\cap K_r(f_2),$ just observe that the left vertical arrows of the squares in $A'(\cC)$ are monomorphisms. Therefore, the claim is nothing but the equality of the sources of the arrows ${\bf K_r}(f_1\times_F f_2)$ and ${\bf K_r}(f_1)\times_{{\bf K_r}(F)}{\bf K_r}(f_2).$
\end{proof}
	\end{thm}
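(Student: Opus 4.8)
The plan is to reduce immediately to the case $r=2$ and then verify a universal property. Since ${\bf K_r}={\bf K_2}\circ\cdots\circ{\bf K_2}$ and a composite of functors each preserving fibered products again preserves them, the iteration principle (Proposition \ref{iteration}) lets me assume $r=2$; the case $r=1$ is trivial because ${\bf K_1}$ is the identity. I would then argue that the arrow ${\bf K_2}(f_1\times_F f_2)$, namely $K=\Res_{\Delta X}(X\times_Y X)\to X$ with $X=X_1\times_\cX X_2$, satisfies the universal property of the fibered product ${\bf K_2}(f_1)\times_{{\bf K_2}(F)}{\bf K_2}(f_2)$ in $A''(\cC)$. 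The essential observation is that this target lives in $A''(\cC)$, where every morphism is a monomorphism, so the fibered product there is an intersection in the sense of Definition \ref{defIntersectionObject}; I would therefore test it against the universal property of the intersection object (Proposition \ref{propUnivPropIntersectionObject}).

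Writing $K_i=K_2(f_i)$ and $\cK=K_2(F)$, a test arrow $P\to Q$ mapping compatibly to the two arrows $K_i\to X_i$ over $\cK\to\cX$ must factor uniquely through $K\to X$. I would split this into producing a morphism $Q\dasharrow X$ and a morphism $P\dasharrow K$, and checking that the resulting square commutes. The first is immediate from the universal property of $X=X_1\cap X_2=X_1\times_\cX X_2$. The square commutativity is a formal consequence: because $X\hookrightarrow X_1$ is a monomorphism, it suffices to compare the two composites $P\to X_1$ obtained through $K$ and through $Q$, and equality then follows from the triangular identities together with commutativity of the top face of the cartesian cube. This reduces everything to the existence of the factorisation $P\dasharrow K$.

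The heart of the argument, and the step I expect to be the main obstacle, is the object-level identity $K=K_1\times_\cK K_2$ realised inside $\cK=\Res_{\Delta\cX}(\cX\times_\cY\cX)$. To get it I would work inside the ambient space $\cX\times_\cY\cX$ and establish the intersection identities $(X_1\times_{Y_1}X_1)\cap(X_2\times_{Y_2}X_2)=X\times_Y X$, then $\Delta X_i=(X_i\times_{Y_i}X_i)\cap\Delta\cX$, and finally $\Delta X=\Delta X_1\cap\Delta X_2$. With the closed embedding $\cW=\Delta\cX$ playing the role of the distinguished subspace, these identities are exactly the hypotheses needed to apply Proposition \ref{propResAndIntersection}, which yields $\Res_{\Delta X}(X\times_Y X)=\Res_{\Delta X_1}(X_1\times_{Y_1}X_1)\cap\Res_{\Delta X_2}(X_2\times_{Y_2}X_2)$, that is $K=K_1\cap K_2=K_1\times_\cK K_2$; the desired map $P\dasharrow K$ is then given by the universal property of this fibered product. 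The delicate points here are the correct identification of the diagonals as intersections and the verification that the hypotheses of Proposition \ref{propResAndIntersection} (closedness of $\cW=\Delta\cX$, together with the embeddings of the two fibered products) genuinely hold in the category of complex spaces. Once the functorial identity is established, the final ``in particular'' claim is immediate: the left vertical arrows of the squares in $A'(\cC)$ are monomorphisms, so the equality of the two arrows forces the equality of their sources, giving $K_r(f_1\times_F f_2)=K_r(f_1)\cap K_r(f_2)$ inside $K_r(F)$.
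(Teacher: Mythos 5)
Your proposal is correct and takes essentially the same route as the paper's own proof: reduction to $r=2$ via the iteration principle, verification of the universal property of the intersection in $A''(\cC)$ through Proposition \ref{propUnivPropIntersectionObject}, the formal derivation of the square commutativity from the two triangles, and the key identity $K=K_1\times_\cK K_2$ obtained from the diagonal intersection identities together with Proposition \ref{propResAndIntersection}. Your explicit observations --- that a composite of functors preserving fibered products again preserves them, and that one needs $(X_1\times_{Y_1}X_1)\cap(X_2\times_{Y_2}X_2)=X\times_Y X$ inside $\cX\times_\cY\cX$ --- merely spell out what the paper leaves implicit, and both hold as you state.
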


\begin{thm}\label{thmKrAsIntersection}For any commutative diagram of the form
\[\begin{tikzcd}
X \arrow{r} \arrow[hookrightarrow]{d}& Y\arrow[hookrightarrow]{d}\\
X'\arrow{r}						& Y' 
\end{tikzcd}
\]
the multiple points satisfy $K_r=K'_r\cap B_r,$ the intersection being made in $B'_r$.
\begin{proof}
Since $Y\hookrightarrow Y'$ induces an isomorphism $X\times _YX\simeq X\times_{Y'}X$, one may as well assume that we are dealing with a commutative diagram of the form 
\[\begin{tikzcd}[row sep=7]
X \arrow{rd} \arrow[hookrightarrow]{dd}\\
& Y\\
X'\arrow{ru}						 
\end{tikzcd}
\]
Now apply Theorem \ref{KrCommutesWithFibProdInA'(C)} to the cartesian cube
\[
\begin{tikzcd}[column sep={2.3em,between origins}, row sep={2em,between origins}]
&Y \arrow[rr,swap] \arrow[dd,equal] 	&	&  *  \arrow[dd] \\
 X \arrow[rr,crossing over,equal] \arrow[ru]&&  X\arrow[ur]& \\
	&Y \arrow[rr,swap] && *  \\
 X' \arrow[rr,equal] \arrow[ur]\arrow[from=uu,crossing over, hookrightarrow]&& X'\arrow[from=uu,crossing over,hookrightarrow]\arrow[ur]
\end{tikzcd}
\qedhere\]
\end{proof}
\end{thm}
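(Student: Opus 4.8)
The plan is to reduce the statement to an application of Theorem \ref{KrCommutesWithFibProdInA'(C)} by exhibiting the map $X\to Y$ as a fibered product in the arrow category $A'(\cC)$. First I would note, as the surrounding text suggests, that we are free to replace $Y$ by $Y'$ wherever convenient: the closed embedding $Y\hookrightarrow Y'$ induces an isomorphism $X\times_Y X\simeq X\times_{Y'}X$ (because forming the fibered product only depends on the two arrows $X\to Y\hookrightarrow Y'$ and their common target, and restricting the target along a monomorphism does not change the fibered product). Hence the iterated spaces $K_r(f)$ for $f\colon X\to Y$ agree with those of the composite $X\to Y\hookrightarrow Y'$, and I may assume $Y=Y'$, i.e. that we simply have maps $X\to Y$ and $X'\to Y$ with a closed embedding $X\hookrightarrow X'$ over $Y$.

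The heart of the argument is to recognise $X\to Y$ as the fibered product, in $A'(\cC)$, of two arrows over the common arrow $F\colon X'\to *$. Concretely I would set up the cartesian cube whose bottom face is the square for $F\colon X'\to *$, whose top-back face is the square for the identity-type arrow $X\to *$, and whose side maps are the embedding $X\hookrightarrow X'$ together with identities, exactly as displayed in the statement. The two arrows being multiplied are $X\to Y$ (equipped with its square to $X'\to *$) and $X'\to X'$ sitting over $F$; one checks that the canonical fibered product of these two arrows in $A'(\cC)$ is indeed $X\to Y$, with the correct projection squares. This is the step that requires care: I must verify that the cube is genuinely cartesian, meaning the source $X$ of $f$ really is $X\times_{X'}X'$ and the target $Y$ really is the corresponding fibered product of targets, and that the vertical arrows are monomorphisms so that the cube lives in $A'(\cC)$. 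Once the cube is confirmed cartesian, Theorem \ref{KrCommutesWithFibProdInA'(C)} applies directly.

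Applying that theorem to this cube yields ${\bf K_r}(f)={\bf K_r}(X\to Y)\times_{{\bf K_r}(X'\to *)}{\bf K_r}(X'\to X')$, and the ``in particular'' clause of Theorem \ref{KrCommutesWithFibProdInA'(C)} gives the corresponding statement for the multiple point spaces themselves: $K_r(f)$ is the intersection, taken inside ${\bf K_r}(F)$, of the two factors. I would then identify these factors with the named objects of the statement. By Definition \ref{defUniversal}, ${\bf K_r}(X'\to *)$ has source $B_r=B_r(X')$, so the intersection is performed inside $B_r(X')$, which is the $B'_r$ of the statement. The factor coming from $X\to Y$ is $K_r=K_r(f)$ embedded in $B'_r$ via Proposition \ref{propUnivMultSpaces}(\ref{propUnivMultSpacesItem1}), while the factor coming from the arrow over $F$ contributes $K'_r=K_r(X'\to Y)$; here I use the reduction of the first paragraph, which identifies $K_r(X'\to Y)$ with $K_r(X'\to Y')$, i.e. with $K'_r$.

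The main obstacle I anticipate is purely in the bookkeeping of the first paragraph: correctly pinning down which vertices of the cube carry which spaces so that the fibered product of arrows genuinely reconstructs $f\colon X\to Y$ rather than some variant, and confirming that all vertical arrows remain (closed) monomorphisms so that the whole diagram is a morphism in $A'(\cC)$ to which the functoriality theorem applies. The residual-space and intersection content is entirely absorbed into Theorem \ref{KrCommutesWithFibProdInA'(C)}, so no further computation with symmetric algebras or blowups is needed; the work is to present the cube and invoke the theorem, together with the two identifications via Propositions \ref{propUnivMultSpaces} and the target-replacement observation.
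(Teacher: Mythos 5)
Your first paragraph (replacing $Y$ by $Y'$ via the isomorphism $X\times_YX\simeq X\times_{Y'}X$, so that one may assume a triangle $X\hookrightarrow X'$ over $Y$) is exactly the paper's reduction, and your identification of the ambient space, $K_r(X'\to *)=B'_r$, is also correct. But the heart of your argument---naming the two factors of the cartesian cube---is wrong, and it is precisely the step you flagged as requiring care. You take the factors to be $f_1=(X\to Y)$ and $f_2=(X'\to X')$ over $F=(X'\to *)$. The fibered product of these two arrows in $A'(\cC)$ has source $X\times_{X'}X'\cong X$, as you say, but its target is $Y\times_{*}X'=Y\times X'$, so $f_1\times_F f_2$ is the graph-type map $X\to Y\times X'$, not $f\colon X\to Y$; with your choice of factors the cube is simply not cartesian, and the verification you promise (``one checks that the canonical fibered product \dots is indeed $X\to Y$'') fails. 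Moreover your displayed conclusion ${\bf K_r}(f)={\bf K_r}(X\to Y)\times_{{\bf K_r}(X'\to *)}{\bf K_r}(X'\to X')$ is circular---the first factor is ${\bf K_r}(f)$ itself---while the second factor has \emph{empty} multiple point spaces, since $X'\to X'$ is an embedding and hence $K_2(X'\to X')=\emptyset$, as is every higher $K_r$. Note also that $B_r=B_r(X)$, one of the two sides of the asserted equality $K_r=K'_r\cap B_r$, never occurs among your factors; your later sentence assigning $K'_r=K_r(X'\to Y)$ to ``the factor coming from the arrow over $F$'' silently substitutes a different arrow for the identity $X'\to X'$ you declared, which only confirms the bookkeeping went astray.

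The repair is to read the cube of the statement the other way around: the factors are $f_1=(X\to *)$ and $f_2=(X'\to Y)$, both sitting over $F=(X'\to *)$, with the square $f_1\to F$ given by $X\hookrightarrow X'$ on sources and $\operatorname{id}_*$ on targets, and the square $f_2\to F$ given by $\operatorname{id}_{X'}$ on sources and $Y\to *$ on targets (both source maps are monomorphisms, so the squares live in $A'(\cC)$). Then $f_1\times_F f_2$ has source $X\times_{X'}X'\cong X$ and target $*\times_* Y\cong Y$, i.e.\ it \emph{is} $f$, and Theorem \ref{KrCommutesWithFibProdInA'(C)} yields
\[
K_r(f)=K_r(X\to *)\cap K_r(X'\to Y)=B_r\cap K'_r
\]
inside $K_r(X'\to *)=B'_r$, where $K_r(X'\to Y)=K_r(X'\to Y')=K'_r$ by your (correct) first-paragraph reduction. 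With this correction the argument coincides with the paper's proof; as written, however, the proposal does not establish the theorem.
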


\begin{prop}\label{propKrCoordinatewise}
For any complex space $X$, the following properties hold inside $B_r$:
	\begin{enumerate}
\item 
Given two maps $f_i\colon X\to Y_i$, let $f=(f_1,f_2)\colon X\to Y_1\times Y_2$. Then
\[K_r(f)= K_{r}(f_1)\cap K_{r}(f_2).\]
\item
Given a map $f\colon X\to Y$ and two subspaces $X_1,X_2\subseteq X$, 
\[K_r(f\vert_{X_1\cap X_2})= K_r(f\vert_{X_1})\cap K_r(f\vert_{X_2}).\]
	\end{enumerate}
\begin{proof}
Apply Theorem \ref{KrCommutesWithFibProdInA'(C)} to the cartesian cubes
\[\begin{tikzcd}[column sep={2.6em,between origins}, row sep={2em,between origins}]
&Y_1\times Y_2 \arrow[rr,swap] \arrow[dd] 	&	&  Y_1  \arrow[dd] \\
 X \arrow[rr,crossing over,equal] \arrow[ru]&&  X\arrow[ur]& \\
	&Y_2 \arrow[rr,swap] && *  \\
 X \arrow[rr,equal] \arrow[ur]\arrow[from=uu,crossing over, equal]&& X\arrow[from=uu,crossing over,equal]\arrow[ur]
\end{tikzcd}
\quad\quad
\begin{tikzcd}[column sep={2.6em,between origins}, row sep={2em,between origins}]
&Y \arrow[rr,swap,equal] \arrow[dd,equal] 	&	&  Y  \arrow[dd,equal] \\
 X_1\cap X_2 \arrow[rr,crossing over,hookrightarrow] \arrow[ru]&&  X_1\arrow[ur]& \\
	&Y \arrow[rr,swap,equal] && Y  \\
 X_2 \arrow[rr,hookrightarrow] \arrow[ur]\arrow[from=uu,crossing over, hookrightarrow]&& X\arrow[from=uu,crossing over,hookrightarrow]\arrow[ur]
\end{tikzcd}
\qedhere\]
\end{proof}
\end{prop}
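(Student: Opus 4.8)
The plan is to recognize each of the two identities as an instance of the functoriality theorem, Theorem \ref{KrCommutesWithFibProdInA'(C)}, by exhibiting the map on the left-hand side as a fibered product in the arrow category $A'(\cC)$. Recall that the theorem furnishes the equality $K_r(f_1\times_F f_2)=K_r(f_1)\cap K_r(f_2)$, with the intersection taken inside $K_r(F)$; so in each case the whole task reduces to identifying the appropriate cartesian cube and reading off its corners.

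For the first item I would take $F$ to be the constant map $X\to *$ and form the fibered product of $f_1\colon X\to Y_1$ and $f_2\colon X\to Y_2$, each equipped with the evident square to $F$ (the identity on the source $X$ and the unique map $Y_i\to *$ on the target). The source of $f_1\times_F f_2$ is then $X\times_X X=X$ and its target is $Y_1\times_* Y_2=Y_1\times Y_2$, so that $f_1\times_F f_2=(f_1,f_2)=f$; this is precisely the first cartesian cube displayed in the proof. Theorem \ref{KrCommutesWithFibProdInA'(C)} therefore yields $K_r(f)=K_r(f_1)\cap K_r(f_2)$, the intersection being taken inside $K_r(F)=K_r(X\to *)=B_r$, as claimed.

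For the second item I would instead take $F$ to be $f$ itself, and form the fibered product of the restrictions $f|_{X_1}\colon X_1\to Y$ and $f|_{X_2}\colon X_2\to Y$, each mapping to $F\colon X\to Y$ by the inclusion $X_i\hookrightarrow X$ on the source and the identity on $Y$. Here the source of the fibered product is $X_1\times_X X_2=X_1\cap X_2$ and the target is $Y\times_Y Y=Y$, so $f|_{X_1}\times_F f|_{X_2}=f|_{X_1\cap X_2}$; this is the second cube. The theorem gives $K_r(f|_{X_1\cap X_2})=K_r(f|_{X_1})\cap K_r(f|_{X_2})$ inside $K_r(F)=K_r$. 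Since $K_r\hookrightarrow B_r$ by Proposition \ref{propUnivMultSpaces}, and since intersections are compatible with further embeddings, this coincides with the intersection formed inside $B_r$.

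The remaining steps are all formal once the cubes are in place; the only point requiring care—and the main potential obstacle—is verifying that the corners of each cube really are the fibered products in $\cC$ (that $X\times_X X=X$ and $Y_1\times_* Y_2=Y_1\times Y_2$ in the first case, $X_1\times_X X_2=X_1\cap X_2$ and $Y\times_Y Y=Y$ in the second), so that the arrow $f_1\times_F f_2$ genuinely is the prescribed map and not merely a map to it. This amounts to checking that the displayed squares are the universal ones, which follows directly from the universal properties of the product $Y_1\times Y_2$ and of the intersection $X_1\cap X_2$. The reconciliation of ``inside $K_r$'' with ``inside $B_r$'' in the second item is the only other subtlety, and it is handled by the transitivity of intersections recorded in Appendix \ref{appendixIntersections}.
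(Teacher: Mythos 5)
Your proof is correct and is essentially identical to the paper's: the paper's entire proof consists of applying Theorem \ref{KrCommutesWithFibProdInA'(C)} to exactly the two cartesian cubes you construct (fibered product of $f_1,f_2$ over $X\to *$ for the first item, and of $f\vert_{X_1},f\vert_{X_2}$ over $f$ for the second). Your added verifications --- that the cube corners really are $X\times_X X=X$, $Y_1\times_* Y_2=Y_1\times Y_2$, $X_1\times_X X_2=X_1\cap X_2$, and that the intersection inside $K_r(f)$ in item (2) agrees with the one inside $B_r$ via $K_r\hookrightarrow B_r$ --- are points the paper leaves implicit, and you handle them correctly.
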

The following result of Kleiman (see \cite[Proposition 2.4]{KleimanMultiplePointFormulasI}) can be recovered from the previous results.

\begin{prop}\label{propKrUnfolding}
For any map $F\colon T\times X\to T\times Y$ of the form $F(t,x)=(t,f_t(x))$, the following hold:
\begin{enumerate}
\item $K_r(F)\subseteq T\times B_r(X)$.
\item $K_r(f_{t_0})=K_r(F)\cap\{t=t_0\}$.
\end{enumerate}
\begin{proof}The first item follows by putting  Propositions \ref{propK_rProjection} and \ref{propKrCoordinatewise} together. For the second item, apply Theorem \ref{thmKrAsIntersection} to the diagram
\[\begin{tikzcd}
\{t_0\}\times X \arrow{r} \arrow[hookrightarrow]{d}&\{t_0\}\times Y\arrow[hookrightarrow]{d}\\
T\times X\arrow{r}						&T\times  Y 
\end{tikzcd}
\]
and observe that, once $K_r(F)$ is embedded in $T\times B_r(X)$, the intersection $K_r(F)\cap B_r(\{t_0\}\times X)$ is nothing but $K_r(F)\cap\{t=t_0\}$.
\end{proof}

\end{prop}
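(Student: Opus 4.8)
The plan is to treat the two items separately, reducing each to machinery already in place. For the first item, I would split the unfolding into its two target coordinates. Writing the target $T\times Y$ as a product, the map $F$ becomes the pair $(F_T,F_Y)$, where $F_T\colon T\times X\to T$ is the projection $(t,x)\mapsto t$ and $F_Y\colon T\times X\to Y$ is $(t,x)\mapsto f_t(x)$. Taking $T\times X$ as the fixed source, Proposition \ref{propKrCoordinatewise}(1) gives $K_r(F)=K_r(F_T)\cap K_r(F_Y)$ as subspaces of $B_r(T\times X)$. Since $F_T$ is precisely the projection handled by Proposition \ref{propK_rProjection}, we have $K_r(F_T)=T\times B_r(X)$, and hence $K_r(F)\subseteq T\times B_r(X)$, which is the first claim.

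For the second item, I would invoke Theorem \ref{thmKrAsIntersection} applied to the square whose bottom row is $F$ and whose top row is its restriction over $t=t_0$, with vertical arrows the closed inclusions $\{t_0\}\times X\hookrightarrow T\times X$ and $\{t_0\}\times Y\hookrightarrow T\times Y$. This square commutes because $F(t_0,x)=(t_0,f_{t_0}(x))$ lands in $\{t_0\}\times Y$, and its top row is $f_{t_0}$ under the evident isomorphisms $\{t_0\}\times X\cong X$ and $\{t_0\}\times Y\cong Y$. The theorem then yields $K_r(f_{t_0})=K_r(F)\cap B_r(\{t_0\}\times X)$, the intersection being taken inside $B_r(T\times X)$.

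It remains to identify this right-hand side with the fiber $K_r(F)\cap\{t=t_0\}$, and this is the step I expect to require the most care. The point is to verify that the canonical embedding $B_r(\{t_0\}\times X)\hookrightarrow B_r(T\times X)$ of Proposition \ref{propUnivMultSpaces}(2), combined with the product identification $K_r(F_T)=T\times B_r(X)$ of Proposition \ref{propK_rProjection}, places $B_r(\{t_0\}\times X)$ exactly as the slice $\{t_0\}\times B_r(X)$. This is a compatibility statement between two functorial constructions; I would establish it using the functoriality clauses of Propositions \ref{propUnivMultSpaces} and \ref{propK_rProjection}, namely that the embedding induced by $\{t_0\}\times X\hookrightarrow T\times X$ commutes with the structure maps to $B_{r-1}$ and with the product decomposition, which forces its image to be the fiber over $t_0$. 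Granting this, $B_r(\{t_0\}\times X)$ meets $T\times B_r(X)$ in precisely $\{t=t_0\}$, and since $K_r(F)$ already lies in $T\times B_r(X)$ by the first item, the intersection $K_r(F)\cap B_r(\{t_0\}\times X)$ coincides with $K_r(F)\cap\{t=t_0\}$, finishing the proof.
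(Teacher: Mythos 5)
Your proof is correct and follows essentially the same route as the paper: item (1) via Propositions \ref{propK_rProjection} and \ref{propKrCoordinatewise}, and item (2) via Theorem \ref{thmKrAsIntersection} applied to the same square. The only difference is that you carefully justify the identification of $B_r(\{t_0\}\times X)$ with the slice $\{t_0\}\times B_r(X)$ inside $T\times B_r(X)$, a compatibility the paper dismisses with a single ``observe''---your extra care there is warranted and consistent with the functoriality clauses of Propositions \ref{propUnivMultSpaces} and \ref{propK_rProjection}.
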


Theorem \ref{thmKrAsIntersection} has less obvious applications than the previous ones. As an example, we sketch the computation of double points of reflection maps, introduced in \cite{Penafort-Sanchis:2016a}.

\begin{ex}
 Let $G$ be a  reflection group acting on $\C^n$. The orbit map (or quotient map) of $G$ is a polynomial map $\C^n\stackrel{\omega}{\longrightarrow}\C^n,$ taking a $G$-orbit to a point, that is, $\omega^{-1}(\omega(x))=Gx,$ for all $x\in \C^n$.
 A \emph{reflection map} is a map $f\colon X\to \C^n$ forming a commutative diagram 
\[\begin{tikzcd}[row sep=7]
X \arrow["f"]{rd} \arrow[hookrightarrow,"h"]{dd}\\
& \C^n\\
\C^n\arrow["\omega"]{ru}						 
\end{tikzcd}
\]
for some embedding $h$. 

As it turns out, some basic theory of reflection groups suffices to describe the double point space of the orbit map, which is a union of smooth components indexed by $G$. To be precise, the double points are the reduced space
\[K_2(\omega)=\bigcup_{g\in G\setminus \{1\}}B_g,\]
where each $B_g\subseteq B_2(\C^n)$ is obtained by blowing up the graph of the map $g\colon \C^n\to \C^n$ along $\Delta(Fix\, g)$. 

The double point space $K_2(f)$ can be computed easily from $K_2(\omega)$, by Theorem \ref{thmKrAsIntersection}, and it inherits the same $G$-indexed decomposition. This machinery was used to show the finite determinacy of new and very degenerate families of examples of map-germs $\C^n\to \C^{2n-1}$, for all $n$. For instance, the maps 
\[(x,y)\mapsto(x^a,y^b,(x+y)^p),\]
\[(x,y,z)\mapsto(x^a,y^b,z^c,(x+y +z)^p,(x-y +2z)^q,(x+y-2z)^r),\]
 are $\cA$-finitely determined if the integers $a,b,c,p,q,r$ are pairwise coprime and $p,q,r$ and $f$ are odd.

\end{ex}

\section{A formula for $K_r$ inside $B_r$}\label{secFormulaKrInBr}
We give an explicit expression for $K_r$, embedded in $B_r$, in the case where $X$ is a complex manifold. This will lead us to the formulas for $K_r$ from Section \ref{secEqsForKr}.

 For any $r\geq 2$, we may assume inductively that we have embedded  $K_{r-1}$ in $B_{r-1}$ and that we have a map $K_{r-2}\to B_{r-2}$ (for the initial case of $r=2$, we take the identity map $X\to X$ and $Y\to *$). This induces an embedding \[K_{r-1}\times_{K_{r-2}} K_{r-1}\hookrightarrow B_{r-1}\times_{B_{r-2}}B_{r-1}.\] 
Recall that we write $b\colon B_r\to B_{r-1}\times_{B_{r-2}}B_{r-1}$ for the blowup maps, and $E$ for their exceptional divisors.

\begin{thm}\label{thmKrEqualsMr}For any  map $f\colon X\to Y$  between manifolds and any $r\geq 2$,
\[K_r=b^{-1}(K_{r-1}\times_{K_{r-2}} K_{r-1}): E\]

\begin{proof}We write $M_r=b^{-1}(K_{r-1}\times_{K_{r-2}} K_{r-1})\colon E$ and show $M_r=K_r$ in three steps.

	{\bf Step 1:} We show the property analogous to Theorem \ref{thmKrAsIntersection} for $M_r$: For any  commutative diagram of maps between manifolds   
\[\begin{tikzcd}[row sep=7]
X \arrow{rd} \arrow[hookrightarrow]{dd}\\
& Y\\
X'\arrow{ru}						 
\end{tikzcd}
\]
 we have $M_r=M'_r\cap B_r$. In other words, $M_r=h^{-1}(M'_r)$, for the embedding $B_r\hookrightarrow B'_r$ canonically induced by $X\hookrightarrow X'$. 
 
We proceed by induction on $r$. We set $M_0=M'_0=Y$, $M_1=X$ and $M_1'=X'$, so that the cases $r=0,1$ are trivial. By induction, we may assume that $r\geq 2$ and that the statement holds up to $r-1$. Consider the commutative diagram 
\[
\begin{tikzcd}[column sep={2em}, row sep={2.5em}]
 B_r  \arrow[hookrightarrow,"h"]{r}\arrow["b"]{d}&B'_r\arrow["b'"]{d}\\
 (B_{r-1}/B_{r-2})^2 \arrow[hookrightarrow,"\overline h"]{r}&(B'_{r-1}/B'_{r-2})^2
\end{tikzcd}
\]
 and, for convenience, write $\overline M=(M_{r-1}/M_{r-2})^2$ and $\overline{M'}=(M'_{r-1}/M'_{r-2})^2$. 

We claim that $b^{-1}(\overline M)= h^{-1}(b'^{-1}(\overline {M'}))$. Indeed, by induction we have 
\[\overline {M}= (h^{-1}(M'_{r-1})/h^{-1}(M'_{r-2}))^2=(\overline h)^{-1}(\overline {M'}),\]
and the claim follows from the commutativity $\overline h \circ b=b'\circ h$.

Let $E$ and $E'$ be the exceptional divisors on $B_{r}$ and $B'_r$, and observe that $E=h^{-1}(E')$. We have the two equalities
\[M_r= h^{-1}(b'^{-1}(\overline {M'})):h^{-1}(E'),\]
\[h^{-1}(M'_r)= h^{-1}(b'^{-1}(\overline {M'}):E').\]
Therefore, it suffices to show 
\[h^{-1}(b'^{-1}(\overline {M'})):h^{-1}(E')= h^{-1}(b'^{-1}(\overline {M'}):E').\]
In turn, this equality may be rewriten as
\[(b'^{-1}(\overline {M'})\cap B_r):(E'\cap B_r)=((b'^{-1}(\overline {M'})):E')\cap B_r.\]

Now write $p\colon B'_r\to B'_{r-1}$ for the usual map, $pr_1\colon(B'_{r-1}/B'_{r-1})^2\to B'_{r-1}$ for the first projection and let \[Z=p^{-1}(M'_{r-1}).\] 
From the inclusion $p(b'^{-1}(\overline {M'}))=(pr_1\circ b')(b'^{-1}(\overline {M'}))\subseteq M'_{r-1}$, we obtain $b'^{-1}(\overline {M'})\subseteq Z$. Consequently, both sides of the equality we need to check are subspaces of $Z$, and the equality may be restated as
\[(J+S):(I+S)=(J:I)+S,\]
for the ideal sheaves $J,S$ and $I$ defining $b'^{-1}(\Gamma')$, $B_r\cap Z$ and $E'\cap Z$ in $Z$, respectively.

 The right to left inclusion holds in general, so we are left with the inclusion $(J+S):(I+S)\subseteq(J:I)+S$. Now $\Delta B'_{r-1}\cap pr^{-1}(M'_{r-1})=\Delta M'_{r-1}$ is a complex subspace of $\overline {M'}$, and, applying $b'^{-1}$, it follows that  $E'\cap Z$ is a complex subpace of $b'^{-1}(\overline {M'})$ or, equivalently, that $J\subseteq I$. Moreover, the ideal $I$ is principal, because $E'$ is a the exceptional divisor in $B_r'$, so locally we may write
 $I=\langle \lambda\rangle$.  

Now let $a\in (J+S): (I+ S)=( J+S): \lambda$. We have $\lambda a=j+s,$ with $j\in J$ and $s\in S$. Since $J\subseteq I$, then $s=c\lambda$, for some $c$. Since $B_r$ is smooth and is not contained in the exceptional divisor, we have $S: \lambda= S$, and hence $c \in S$.  We have $\lambda(a-c)=j\in J$, so $a-c\in J:\lambda$, and therefore $a\in (J:\lambda)+S$.

	{\bf Step 2:} We reduce the proof to show $K_r=M_r$ for submersions. For any map $X\to Y$ between manifolds, consider the commutative diagram 
\[\begin{tikzcd}[row sep=12]
X \arrow{rd} \arrow[hookrightarrow,"\Gamma"]{dd}\\
& Y\\
X\times Y\arrow["\pi"]{ru}						 
\end{tikzcd}
\]
where $\Gamma$ is the graph embedding and $\pi$ the projection on the second factor. The embedding $\Gamma$ induces embeddings $B_r(X)\hookrightarrow B_r(X\times Y)$, and $\pi$ is obviously a submersion. Assuming that $K_r=M_r$ for submersions, from Theorem \ref{thmKrAsIntersection} and the previous step we obtain the equality
\[K_r(f)=K_r(\pi)\cap B_r(X)=M_r(\pi)\cap B_r(X)=M_r(f)\]
inside $B_r(X\times Y)$, as desired.

{\bf Step 3:} We show $M_r=K_r$ for submersions. By induction, we may assume  $M_{r-1}=K_{r-1}$ and $M_{r-2}=K_{r-2}$. We know from Proposition \ref{propMultPointsOfSubmersion}  that all maps $K_r\to K_{r-1}$ are submersions between smooth spaces. This in turn implies the smoothness of \[Y=\Delta B_{r-1}, \quad Z=M_{r-1}\times_{M_{r-2}}M_{r-1},\quad\text{and}\quad X=B_{r-1}\times_{B_{r-2}}B_{r-1}.\]  Since submanifolds are regularly embedded,  from Lemma \ref{LemaKleiman} we obtain
\begin{align*}
M_r&=b^{-1}\left(M_{r-1}\times_{M_{r-2}}M_{r-1}\right):b^{-1}(\Delta B_{r-1})\\
	&=\Res_{\Delta M_{r-1}}\left(M_{r-1}\times_{M_{r-2}}M_{r-1}\right)=K_r.&\qedhere
\end{align*}
\end{proof}
\end{thm}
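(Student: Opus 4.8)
The plan is to prove the theorem by reducing the general case to the case of submersions, exactly the situation covered by Lemma \ref{LemaKleiman}, where the residual space literally coincides with the blowup computation defining $M_r$. I would set $M_r=b^{-1}(K_{r-1}\times_{K_{r-2}}K_{r-1}):E$ and show $M_r=K_r$ in three stages, mirroring the way the intersection property of Theorem \ref{thmKrAsIntersection} was used for $K_r$. The whole strategy rests on establishing, first, that $M_r$ behaves functorially under embeddings in the same way $K_r$ does.

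First I would prove the analogue of Theorem \ref{thmKrAsIntersection} for $M_r$: for a commutative diagram with $X\hookrightarrow X'$ over a common target $Y$, one has $M_r=M'_r\cap B_r$ inside $B'_r$, equivalently $M_r=h^{-1}(M'_r)$ for the induced embedding $h\colon B_r\hookrightarrow B'_r$. This I would do by induction on $r$, using the commutativity $\overline h\circ b=b'\circ h$ of the blowup maps together with the inductive hypothesis applied to $K_{r-1},K_{r-2}$ to identify $b^{-1}(\overline M)=h^{-1}(b'^{-1}(\overline{M'}))$. The equation $M_r=h^{-1}(M'_r)$ then reduces to the commutation of the colon operation with preimage, namely $h^{-1}(A):h^{-1}(E')=h^{-1}(A:E')$ for $A=b'^{-1}(\overline{M'})$.

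The hard part will be precisely this last colon-versus-restriction identity, since colon ideals do not commute with arbitrary restriction. I would localise and rewrite it as an identity of ideal sheaves $(J+S):(I+S)=(J:I)+S$ on a suitable ambient $Z=p^{-1}(M'_{r-1})$ containing $b'^{-1}(\overline{M'})$, where $J,I,S$ define $b'^{-1}(\overline{M'})$, $E'\cap Z$ and $B_r\cap Z$ respectively. The right-to-left inclusion is formal; for the reverse I would exploit two geometric facts: that $J\subseteq I$ (because $E'\cap Z$ lands inside $b'^{-1}(\overline{M'})$, coming from $\Delta B'_{r-1}\cap pr^{-1}(M'_{r-1})=\Delta M'_{r-1}$), and that $I$ is locally principal, say $I=\langle\lambda\rangle$, since $E'$ is a Cartier divisor. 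Smoothness of $B_r$ and the fact that $B_r\not\subseteq E'$ give $S:\lambda=S$, and a short direct manipulation of $\lambda a=j+s$ with $s=c\lambda$ then yields $a\in(J:\lambda)+S$.

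With this intersection property in hand, the remaining two steps are short. Using the graph factorisation $X\stackrel{\Gamma}{\hookrightarrow}X\times Y\stackrel{\pi}{\to}Y$ with $\pi$ a submersion, Theorem \ref{thmKrAsIntersection} (for $K_r$) and Step 1 (for $M_r$) combine to give $K_r(f)=K_r(\pi)\cap B_r(X)$ and $M_r(f)=M_r(\pi)\cap B_r(X)$, so it suffices to treat submersions. For a submersion, inductively assuming $M_{r-1}=K_{r-1}$ and $M_{r-2}=K_{r-2}$, all the relevant spaces $\Delta B_{r-1}$, $M_{r-1}\times_{M_{r-2}}M_{r-1}$ and $B_{r-1}\times_{B_{r-2}}B_{r-1}$ are smooth by Proposition \ref{propMultPointsOfSubmersion}, hence regularly embedded, and Lemma \ref{LemaKleiman} identifies $M_r$ with $\Res_{\Delta M_{r-1}}(M_{r-1}\times_{M_{r-2}}M_{r-1})=K_r$ directly.
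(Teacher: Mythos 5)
Your proposal is correct and follows essentially the same route as the paper's proof: the same three-step structure (functoriality of $M_r$ under embeddings via the colon identity $(J+S):(I+S)=(J:I)+S$, reduction to submersions through the graph factorisation, and the submersion case via Lemma \ref{LemaKleiman}), including the key geometric inputs $J\subseteq I$, the principality of $I$, and $S:\lambda=S$. Nothing essential is missing.
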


\begin{rem}
Theorem \ref{thmKrEqualsMr} was already known for $r=2$, that is, for double points of maps between smooth spaces, see for example \cite[Section 9.3]{Fulton1998Intersection-Th}. However, unlike other results found here, this one does not admit an easy reduction to the case $r=2$. Observe that the source and target of higher maps $K_r\to K_{r-1}$ may not be smooth, and that we don't know a priori that $M_r=b^{-1}(K_{r-1}\times_{K_{r-2}} K_{r-1})\colon E$ satisfies any short of iteration principle. In general, arguments about the algebraic structure of higher $K_r$ are delicate, and it is apparent that our proof relies heavily on the machinery from previous sections.
\end{rem}

									\section{Coordinates for the universal spaces $B_r$}\label{secCoordinatesForBr}
We give coordinates for $B_r(X)$ in the case where $X$ is a complex manifold. First, we justify that it suffices to give coordinates for the spaces $B_r(\C^n)$

 \begin{definition}
Let $\sP$ be a partition $r_1+\dots+r_s=r$ of $r$. We say that a point $z\in B_r$ is of type $\sP$ if the components of  its projection in $X^r$ consist of $s$ different points $x^{(i)}\in X$, each repeated $r_s$ times (in a possibly disordered way).
\end{definition}

Given a point $z\in B_r$ of type $\sP$, we may take pairwise disjoint open subsets $U_1,\dots U_s\subseteq X$, so that $x^{(i)}\in U_i$. For brevity, we write $f^{(i)}=f\vert_{U_i}$.

\begin{prop}\label{propMrAndPartitions}
Let $f\colon X\to Y$ be a map between complex manifolds. Around a point $z\in B_r$ of type $\sP$, the space $K_r(f)$ is locally isomorphic to 
\[K_{r_1}(f^{(1)})\times_Y\dots\times_Y K_{r_s}(f^{(s)}).\]
\begin{proof}See Proof \ref{proofMrAndPartitions} in Appendix \ref{appendixProofs}.
\end{proof}
\end{prop}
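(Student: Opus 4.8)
The plan is to reduce the statement to the universal, smooth spaces $B_r$ and then to descend to $K_r(f)$ by functoriality. Throughout I use that residual spaces, blowups and fibered products are local over their bases, so the formation of $K_r$ commutes with restriction to open subsets of $X$. Writing $W=U_1\sqcup\dots\sqcup U_s$, an open subset of $X$ containing every $x^{(i)}$, I may therefore replace $f$ by $f|_W$ near $z$; moreover, inside $B_r(W)$ the locus where the $j$-th coordinate lies in the block prescribed by $z$ is open and closed, so I may restrict to the single coordinate ``distribution'' determined by $\sP$.

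First I would settle the universal case $Y=*$, where $\times_Y$ is the ordinary product and the claim reads $B_r(W)\cong B_{r_1}(U_1)\times\dots\times B_{r_s}(U_s)$ near $z$. I argue by induction on $r$ using the blowup description $B_r=\Bl_{\Delta B_{r-1}}(B_{r-1}\times_{B_{r-2}}B_{r-1})$ of Proposition \ref{propBrForManifolds}. Since $W$ is a disjoint union, the induction hypothesis splits $B_{r-1}(W)$ and $B_{r-2}(W)$ as disjoint unions of products indexed by the distribution of the coordinates among the $U_i$, compatibly with the forgetful maps $B_{r-1}\to B_{r-2}$. Passing to the relevant components and forming the fibered product, I distinguish whether the last two coordinates lie in different blocks or in the same block $U_k$. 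In the first case we are off the diagonal $\Delta B_{r-1}$, the blowup restricts to an isomorphism, and the fibered product of the two decomposed factors splits blockwise because finite limits commute. In the second case the center $\Delta B_{r-1}$ is full in every block except the $k$-th, so---the ambient being smooth and the projection onto the $k$-th factor flat---the blowup commutes with the product and is concentrated in the $k$-th block, where it reproduces $B_{r_k}(U_k)$ by definition. Either way the component of $z$ is $\prod_i B_{r_i}(U_i)$, and the decomposition is compatible with the embeddings of Proposition \ref{propUnivMultSpaces}.

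To treat a general $f\colon X\to Y$ I would use the graph factorisation $f=\pi\circ\Gamma$, with $\Gamma\colon X\hookrightarrow X\times Y$ the (closed) graph embedding and $\pi\colon X\times Y\to Y$ the projection, exactly as in the proof of Theorem \ref{thmKrEqualsMr}. By Proposition \ref{propK_rProjection} we have $K_r(\pi)=Y\times B_r(X)$, so the previous paragraph supplies its blockwise decomposition, which as a fibered product over $Y$ reads $K_{r_1}(\pi^{(1)})\times_Y\dots\times_Y K_{r_s}(\pi^{(s)})$; likewise $B_r(X)\hookrightarrow B_r(X\times Y)$ decomposes blockwise as $\prod_i B_{r_i}(U_i)$. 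Since $K_r(f)=K_r(\pi)\cap B_r(X)$ inside $B_r(X\times Y)$ by Theorem \ref{thmKrAsIntersection}, and intersection distributes over products, the $i$-th block of the intersection is $K_{r_i}(\pi^{(i)})\cap B_{r_i}(U_i)=K_{r_i}(f^{(i)})$ (Theorem \ref{thmKrAsIntersection} again), while the over-$Y$ constraint built into $K_r(\pi)=Y\times B_r(X)$ forces the blocks to share a common image in $Y$. This is precisely $K_{r_1}(f^{(1)})\times_Y\dots\times_Y K_{r_s}(f^{(s)})$.

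The main obstacle is the same-block step of the induction, namely that the residual (blowup) must commute with the product decomposition. Over a general base this is a delicate base-change question, since the relevant projection need not be flat and the pulled-back ideal of the center may differ from the ideal of the product center; passing through the smooth spaces $B_r$ is exactly what renders this harmless, as there all products are honest products of manifolds and the projections are submersions. The remaining, more routine labour is the combinatorial bookkeeping of how the coordinate distribution---and with it the block decomposition---transforms under the forgetful maps $B_{r-1}\to B_{r-2}$, together with checking that these decompositions are natural enough to be intersected in the descent step.
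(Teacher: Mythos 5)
Your proof is correct, but it takes a genuinely different route from the paper's. The paper proves the proposition by a direct induction on $r$ carried out on the (possibly singular) spaces $K_r$ themselves, strengthening the inductive statement: it records, for each local model $K_{r_1}\times_Y\cdots\times_Y K_{r_s}$, which of two shapes the structure map $K_r\to K_{r-1}$ has (either it drops a final $K_1$-factor, or it restricts $K_{r_i+1}\to K_{r_i}$ on a single factor), and then computes $K_r\times_{K_{r-1}}K_r$ and the residual along $\Delta K_r$ case by case, using only that $\Res$ commutes with extra fibered factors, as in Proposition \ref{propK_rProjection} --- no blowups, no smoothness, no Theorem \ref{thmKrAsIntersection}. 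You instead settle the universal case first, splitting $B_r(W)$ blockwise via the blowup presentation of Proposition \ref{propBrForManifolds}, flat base change for blowups, and the off-diagonal/same-block dichotomy, and then descend by the graph trick of Step 2 of Theorem \ref{thmKrEqualsMr}, i.e.\ $K_r(f)=K_r(\pi)\cap B_r(X)$ together with Proposition \ref{propK_rProjection}. Both routes are sound. Yours buys a simultaneous, mutually compatible decomposition of $B_r$ and of $K_r\subseteq B_r$ (the paper asserts the $B_r$-splitting separately, in the remark following the proposition; it is the special case $Y=*$ of the statement), and it confines all delicate geometry to the smooth spaces $B_r$, where the blowup genuinely commutes with the product because the projection is flat. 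The price is twofold: the naturality bookkeeping you yourself flag --- your decompositions of $K_r(\pi)$ and of $B_r(X)$ must be induced by one and the same local identification of $B_r(X\times Y)$ with $\prod_i B_{r_i}(U_i\times Y)$ before ``intersection distributes over products'' can be invoked, which rests on the uniqueness clauses of Propositions \ref{propFunctorialityK_k} and \ref{propUnivMultSpaces} --- and a genuine dependence on $X$ being a manifold, which the paper's residual-space induction avoids entirely (though the proposition is only stated for manifolds, so nothing is lost here). Your strengthened induction hypothesis, compatibility with the forgetful maps identified blockwise, is precisely the analogue of the paper's two-case bookkeeping, so neither route escapes that combinatorial layer.
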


\begin{rem}
Let $z\in B_r$ be a point of type $\sP$ as above, for a manifold $X$. Then $B_r$ is locally isomorphic at $z$ to 
\[B_{r_1}(U_1)\times\dots\times B_{r_s}(U_s),\]
for some disjoint coordinate open subsets $U_\alpha$, which we may regard them as subsets $U_\alpha\subset\C^n$. Since both $U_\alpha$ and $\C^n$ are smooth of the same dimension, it follows from Propositions \ref{propBrForManifolds} and \ref{propUnivMultSpaces} that $B_r(U_\alpha)$ is an open submanifold of $B_r(\C^n)$. Hence, coordinates for $B_r(U_\alpha)$ are obtained by restriction of the coordinates for $B_r(\C^n)$.
\end{rem}
\subsection*{A pyramid of maps}
We need one last ingredient, the triangular diagram below, before giving coordinates for the spaces $B_r$. We write the $s$-fold fibered product of a map $X\to Y$ as
\[(X/Y)^s=X\times_Y\stackrel{s}{\dots}\times_Y X.\] 
Unless otherwise stated, maps $(X/Y)^{s}\to (X/Y)^{s-t}$ are assumed to drop the last $t$ components. To avoid confusion, we write
\[(X/Y)^{s}\stackrel{\beta}{\longrightarrow} X\] for the map which keeps the last component and write $\tau\colon K_{r+1}\to K_{r}$ for the composition $K_{r+1}\stackrel{}{\longrightarrow}K_{r}\times_{K_{r-1}}K_{r}\stackrel{\beta}{\longrightarrow}K_{r}$.

	\begin{lem}\label{lemTriangleK}
For any $X\to Y$, there are unique maps
\[(K_{r+1}/K_{r})^s\to (K_{r}/K_{r-1})^{s+1},\] such that the following two diagrams commute:
\[
\begin{tikzcd}[sep=1em]
	 \cdots\arrow[r]&K_4\arrow[d]\\
	 \cdots\arrow[r]&(K_3/K_2)^2\arrow[r]\arrow[d]&K_3\arrow[d]\\
	 \cdots\arrow[r]&(K_2/X)^3\arrow[r]\arrow[d]&(K_2/X)^2\arrow[r]\arrow[d]&K_2\arrow[d]\\
	 \cdots\arrow[r]&(X/Y)^4\arrow[r]&(X/Y)^3\arrow[r]&(X/Y)^2\arrow[r]&X\\
\end{tikzcd}
\tag{$T_K$}\label{TSubK}\]
where $K_{r+1}\to (K_{r}/K_{r-1})^2$ are the structure maps, and 
\[
\begin{tikzcd}[column sep= 8,row sep=1.8em]
	(K_{r+1}/K_{r})^s\arrow[r]\arrow[d,"\beta"']&	(K_{r}/K_{r-1})^{s+1}\arrow[d,"\beta"]\\
	K_{r+1}\arrow[r,"\tau"]&K_r
\end{tikzcd}
\]
\begin{proof} See Proof \ref{proofTriangleK} in Appendix \ref{appendixProofs}
\end{proof}
	\end{lem}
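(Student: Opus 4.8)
The plan is to fix $r$ and construct, for every $s\ge 1$, the map
\[\Phi_s\colon (K_{r+1}/K_r)^s\longrightarrow (K_r/K_{r-1})^{s+1}\]
by prescribing its $s+1$ components, and then to read off both existence and uniqueness from the universal property of the target, which is an iterated fibered product. Throughout, let $\pi\colon K_{r+1}\to K_r$ be the canonical map (first projection composed with the structure map), so that $(K_{r+1}/K_r)^s$ is the $s$-fold fibered power of $\pi$; let $\rho\colon K_r\to K_{r-1}$ be the canonical map one level down, so that $(K_r/K_{r-1})^{s+1}$ is the $(s+1)$-fold fibered power of $\rho$; and recall that $\tau=\beta\circ(\text{structure map})$ is the \emph{second} projection $K_{r+1}\to K_r$. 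Since the structure map factors through $K_r\times_{K_{r-1}}K_r$, we have the basic identity $\rho\circ\pi=\rho\circ\tau$.

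For existence I would define $\Phi_s$ through its components $\Phi_s^{(0)},\dots,\Phi_s^{(s)}\colon (K_{r+1}/K_r)^s\to K_r$ by
\[\Phi_s^{(0)}=\pi\circ\operatorname{pr}_1,\qquad \Phi_s^{(j)}=\tau\circ\operatorname{pr}_j\quad(1\le j\le s),\]
where $\operatorname{pr}_j$ is the $j$-th projection to $K_{r+1}$. The one genuine point to verify is that these $s+1$ maps all induce the \emph{same} map to $K_{r-1}$, so that they assemble into a morphism to the fibered power $(K_r/K_{r-1})^{s+1}$. This holds because on $(K_{r+1}/K_r)^s$ all the $\pi\circ\operatorname{pr}_j$ coincide (the factors are glued over $K_r$ via $\pi$), and because $\rho\circ\pi=\rho\circ\tau$; hence $\rho\circ\Phi_s^{(j)}=\rho\circ\pi\circ\operatorname{pr}_1$ for every $j$. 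The universal property of the fibered product then produces $\Phi_s$. Checking the two displayed diagrams is now immediate from the formulas: dropping the last component upstairs discards $\operatorname{pr}_s$ and precomposes the surviving $\operatorname{pr}_j$ correctly, matching the drop-last map downstairs, so $(T_K)$ commutes; and $\beta$ keeps $\operatorname{pr}_s$ upstairs while keeping the component $\tau\circ\operatorname{pr}_s$ downstairs, both giving $\tau\circ\operatorname{pr}_s$, so the $\beta$-square commutes. For $s=1$ the formula returns exactly the structure map $(\pi,\tau)$, as required.

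For uniqueness I would argue by induction on $s$, using that a morphism into the fibered power $(K_r/K_{r-1})^{s+1}$ is determined by its $s+1$ components to $K_r$. The base case $s=1$ is forced, since the $s=1$ maps are prescribed to be the structure maps. For the inductive step, the $\beta$-square forces the last component, $\beta\circ\Phi_s=\tau\circ\beta=\tau\circ\operatorname{pr}_s$, while the commutativity of $(T_K)$ against the drop-last maps identifies the remaining $s$ components of $\Phi_s$ with those of $\Phi_{s-1}$ precomposed with the projection $(K_{r+1}/K_r)^s\to(K_{r+1}/K_r)^{s-1}$; these are determined by the inductive hypothesis. Hence all components of $\Phi_s$ are pinned down.

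I expect the only real obstacle to be bookkeeping: keeping straight the two distinct maps $\pi$ and $\tau$ from $K_{r+1}$ to $K_r$, and reconciling the ``drop the last component'' convention upstairs and downstairs with the ``keep the last component'' map $\beta$ when verifying the two squares. There is no deep geometric difficulty, and in particular no induction on $r$ is needed: for each fixed $r$ the construction uses only the structure map $K_{r+1}\to K_r\times_{K_{r-1}}K_r$ together with the canonical maps $\pi,\tau$ and $\rho$, so the whole array $(T_K)$ is assembled level by level by repeating the same argument.
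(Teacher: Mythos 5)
Your proof is correct, and it reaches the lemma by a mildly but genuinely different route than the paper's Proof \ref{proofTriangleK}. The paper runs a single induction on $s$ in which existence and uniqueness come together: it views the target as the binary fibered product $(K_r/K_{r-1})^{s+1}=(K_r/K_{r-1})^s\times_{K_{r-1}}K_r$, verifies through a chain of five equalities (using the inductive $\beta$-square at level $s-1$) that the two legs $(K_{r+1}/K_r)^s\to(K_{r+1}/K_r)^{s-1}\to(K_r/K_{r-1})^s\to K_{r-1}$ and $(K_{r+1}/K_r)^s\stackrel{\beta}{\to}K_{r+1}\stackrel{\tau}{\to}K_r\to K_{r-1}$ coincide, and then invokes the universal property of that binary product; the two commutativities required by the lemma are exactly its two structure squares, which is why no separate uniqueness argument is needed there. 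You instead use the $(s+1)$-ary (wide-pullback) universal property of the iterated fibered power and give the map in closed form, $\Phi_s=(\pi\circ\operatorname{pr}_1,\tau\circ\operatorname{pr}_1,\dots,\tau\circ\operatorname{pr}_s)$, with all the compatibility reduced to the single identity $\rho\circ\pi=\rho\circ\tau$ combined with $\pi\circ\operatorname{pr}_j=\pi\circ\operatorname{pr}_1$ --- which is precisely what the paper's chain of equalities unwinds --- at the cost of a separate short induction on $s$ for uniqueness. Both arguments correctly anchor the base case at the prescribed structure map, whose components are $(\pi,\tau)$. What your version buys is the explicit component formula, which makes both squares one-line checks and matches the coordinate picture of these maps in Section \ref{secCoordinatesForBr} (dropping the top row of the pyramid (\ref{Pyramid})); what the paper's version buys is that it only ever uses the binary fibered product, never the $(s+1)$-ary limit description. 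Your closing remark is consistent with the paper: its induction is also on $s$ only, with $r$ fixed throughout.
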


\begin{rem}\label{remTSubB}
As a particular case of Lemma \ref{lemTriangleK}, for any complex space $X$ there is a unique analogous commutative diagram  
\[
\begin{tikzcd}[sep=1em]
	 \cdots\arrow[r]&B_4\arrow[d]\\
	 \cdots\arrow[r]&(B_3/B_2)^2\arrow[r]\arrow[d]&B_3\arrow[d]\\
	 \cdots\arrow[r]&(B_2/X)^3\arrow[r]\arrow[d]&(B_2/X)^2\arrow[r]\arrow[d]&B_2\arrow[d]\\
	 \cdots\arrow[r]&X^4\arrow[r]&X^3\arrow[r]&X^2\arrow[r]&X\\
\end{tikzcd}
\tag{$T_B$}\label{TSubB}\]
\end{rem}

Now we are ready to give coordinates for $B_r=B_r(\C^n)$. For technical reasons, we need to describe the spaces $(B_ r/B_{ r-1})^s$ in the diagram \ref{TSubB} and the maps between them. To gain intuition, we start with the universal double and triple points for $\C^2$. This will also fix the notation for double and triple points of maps corank $\leq 2$, as in Examples \ref{exGenItDivDiffCor2} and \ref{exDoblesTrifoldCone}. %Finally, for any map $X\to Y$ between complex manifolds, there is an analogous diagram where $M_r$ replace $K_r$, and all the arrows are obtained by restriction of those for $B_r$.

\subsection*{Atlas for the universal double point space $\boldsymbol{B_2(\C^2)}$} Let $X=\C^2$. As stated in Example \ref{exUniversalDoubleCn},  the universal double point space of $X$ is 
\[B_2=\{((x,y),(x',y'),(u_1:u_2))\in \C^2\times \C^2\times \P^1\mid (x'-x,y'-y)\wedge (u_1,u_2)=0\}.\]
The space $B_2$ is covered by the open subsets $U_1,U_2$, where
\[U_i=\{\big((x,y),(x',y'),(u_1:u_2)\big)\in B_2\mid u_i\neq 0\}.\]
These open subsets are isomorphic to $\C^4$, respectivelly, via the maps
\[((x,y),(x',y'),(u_1:u_2)])\longmapsto ((x,y),x'-x,\frac{u_2}{u_1}),\]
\[((x,y),(x',y'),(u_1:u_2))\longmapsto ((x,y),y'-y,\frac{u_1}{u_2}).\]
Finally, the inverse maps are 
\[((x,y),\lambda,a)\longmapsto\big((x,y),(x+\lambda,y+\lambda a),(1:a)\big),\]
\[((x,y),\lambda,a)\longmapsto\big((x,y),(x+\lambda a,y+\lambda),(a:1)\big),\]
and the exceptional divisor is given by $\lambda=0$ on each $U_i$.
\subsection*{Compatible atlas for $\boldsymbol{B_3(\C^2), (B_2(\C^2)/\C^2)^2}$ and $\boldsymbol{B_2(\C^2)}$} By Proposition \ref{propBrForManifolds}, $B_3$ is the blowup of $(B_2/\C^2)^2$ along $\Delta B^2$, and $(B_2/\C^2)^2$ can be seen as the space of tuples
\[\big((x,y),(x',y'),(x'',y''),[u],[u']\big)\in (\C^2)^3\times (\P^1)^2,\]
such that
\[(x'-x,y'-y)\wedge u=(x''-x,y''-y)\wedge u'=0.\]
By unicity of the maps in Lemma \ref{lemTriangleK}, the map $(B^2)^2\to (\C^2)^3$ is given by 
\[\big((x,y),(x',y'),(x'',y''),[u],[u']\big)\mapsto ((x,y),(x',y'),(x'',y'')\big),\]
because it satisfies the corresponding commutativites.

Now we cover $(B_2/\C^2)^2$ and $B_2$ by open coordinate subsets in a way that allows us to compute $B_3$, and to express the maps $B^3\to(B^2/\C^2)^2$ and $(B^2/\C^2)^2\to (\C^2)^3$ conveniently. It is easy to see (and it follows from Lemma \ref{lemCoveringCollection}) that, by setting $L_1(x,y)=x$, $L_2(x,y)=y$ and $L_3(x,y)=x+y$, we have a covering of $(B_2/\C^2)^2$ by the three open subsets 
\[U_i^2=\{\big((x,y),(x',y'),(x'',y''),[u],[u']\big)\in (B_2/\C^2)^2 \mid L_i(u)\neq 0\neq L_i(u')\}.\]
It is clear that $(B_2/\C^2)^2\to B_2$ restricts to $U_1^2\to U_1$ and $U_2^2\to U_2$, so we shall add to our covering of $B_2$ a new open subset 
\[U_3= \{\big((x,y),(x',y'),[u]\big)\in B_2\mid L_3(u)\neq 0\},\] to have the corresponding restriction $U_3^2\to U_3$.
The isomorphism $U_3\to \C^4$ and its inverse are given by
\[\big((x,y),(x',y'),[u]\big)\longmapsto \big((x,y),(x'-x+y'-y,\frac{u_1}{u_1+u_2})\big),\]
\[\big((x,y),(\lambda,a)\big)\longmapsto \big((x,y),(x+\lambda a,y+\lambda(a-1) ),(a:a-1)\big).\]

To give coordinates to the new open subsets, fix some other linear forms $L'_i$, each of them linearly independent to the corresponding $L_i$, for example $L'_1=y$, $L'_2=x$ and $L'_3=x$. We have the isomorphisms $U_i^2\to \C^6$ mapping a point $\big((x,y),(x',y'),(x'',y''),[u],[u']\big)$ to the point
\[\Big(\big(x,y\big),\,\big(L_i(x'-x,y'-y),\frac{L'_i(u)}{L_i(u)}\big),\,\big(L_i(x''-x,y''-y),\frac{L'_i(u')}{L(u')}\big)\Big).\]
Our choices for $i=1,2,3$ map the point $\big((x,y),(x',y'),(x'',y''),[u],[u']\big)$, respectively, to the point
\[\big((x,y),(x'-x,\frac{u_2}{u_1}),(x''-x,\frac{u'_2}{u'_1})\big),\]
\[\big((x,y),(y'-y,\frac{u_1}{u_2}),(y''-y,\frac{u'_1}{u'_2})\big),\]
\[\big((x,y),(x'-x+y'-y,\frac{u_1}{u_1+u_2}),(x''-x+y''-y,\frac{u'_1}{u'_1+u'_2})\big).\]
The inverse isomorphisms $\C^6\to U_i^2$ map a point $\big((x,y),(\lambda,a),(\lambda',a')\big)$, respectively to the point
\[\big((x,y),(x+\lambda,y+\lambda a),(x+\lambda',y+\lambda' a'),(1:a),(1:a')\big),\]
\[\big((x,y),(x+\lambda a,y+\lambda),(x+\lambda' a',y+\lambda'),(a:1),(a':1)\big),\]
\[\big((x,y),(x+\lambda a,y+\lambda(a-1) ),(x+\lambda' a',y+\lambda'(a'-1)),(a:a-1),(a':a'-1)\big).\]

To compute $B_3$, observe that on each $U_i^2$ the diagonal $\Delta B^2$ is regularly embedded by the equations
\[L_i(x',y')=L_i(x'',y''),\quad \frac{L'_i(u)}{L_i(u)}=\frac{L'_i(u')}{L_i(u')},\] and  is mapped to the set $\{\lambda=\lambda',a=a'\}$. Consequently, $B_3$ can be described as the set of tuples
\[\big((x,y),(x',y'),(x'',y''),[u],[u'],[v]\big),\]
satisfying the conditions
\begin{enumerate}
\item $\big((x,y),(x',y'),(x'',y''),[u],[u']\big)\in U_i$, for some $i$, with $1\leq i\leq 3$,
\item $\Big(L_i(x''-x',y''-y'),\dfrac{L'_i(u)}{L_i(u)}-\dfrac{L'_i(u')}{L_i(u')}\Big)\wedge v=0$.
\end{enumerate}

We may cover $B_3$ by six open subsets $U_{ij}$, with $1\leq i\leq 3$ and $1\leq j\leq 2$, of the form 
\[U_{ij}=\{\big((x,y),(x',y'),(x'',y''),[u],[u'],[v]\big)\in B_3\mid L_i(u), L_i(u'),L_j(v)\neq 0.\}\]
The previous local isomorphisms allow us to regard $B_3$ on each $U_{ij}$ as the set of tuples
\[\big((x,y),(\lambda,a),(\lambda',a'),[v]\big), \]
satisfying $L_j(v)\neq 0$ and $(\lambda'-\lambda,a'-a)\wedge v=0$. Each $U_{ij}$ is mapped isomorphically to $\C^6$ by means of 
\[\big((x,y),(\lambda,a),(\lambda',a'),[v]\big)\longmapsto \big((x,y),(\lambda,a),L_j(\lambda'-\lambda,a'-a),\frac{L'_j(v)}{L_j(v)}\big).\]
Our choices for $j=1,2$ map a point $\big((x,y),(\lambda,a),(\lambda',a'),[v]\big)$, respectively, to the point
\[\big((x,y),(\lambda,a),\lambda'-\lambda,\frac{v_2}{v_1}\big),\]
\[\big((x,y),(\lambda,a),a'-a,\frac{v_1}{v_2}\big),\]  The respective inverse isomorphisms map a point $\big((x,y),(\lambda,a),(\mu, b)\big)$ to the point
\[ \big((x,y),(\lambda,a),(\lambda+\mu,a+\mu b),(1:b)\big),\]
\[\big((x,y),(\lambda,a),(\lambda+\mu b,a+\mu),(b:1)\big).\]
The exceptional divisor in $B_3$ is the preimage of the diagonal $\Delta B_2$ by $B_3\to (B_2/\C^2)^2$, and is given in $U_{ij}$ by the equation $\mu=0$.

Fixed $i$ and $j$, we can arrange the coordinates $(x,y)$, $(x',y')$, $(x'',y'')$, $(\lambda,a)$, $(\lambda',a')$ and $(\mu,b)$ of the spaces $\C^2,(\C^2)^2,B_2,(\C^2)^3,(B_2/\C^2)^2$ and $B_3$ into a triangle
\[\left.\begin{array}{lll}
(\mu,b) &  &  \\
(\lambda',a') &(\lambda,a)  &  \\
 (x'',y'')&(x',y')	  &  (x,y)
 \end{array}\right.\]
corresponding to the triangular diagram in Lemma \ref{lemTriangleK}. The expression in local coordinates of the maps $B_2\to (\C^2)^2$, $B_3\to (B_2/\C^2)^2$ and $(B^2/\C^2)^2\to (\C^2)^3$ can be read directly from the maps $\C^4\to U_{i}$, $\C^6\to \widetilde U_{i}$ and $\C^6\to U_{ij}$. For example, for $\alpha=(i,j)=(1,1)$, these maps are determined by 
\[x'=x+\lambda,\quad y'=y+\lambda a, \quad x''=x+\lambda',\quad y''=y+\lambda' a'\]
\[\lambda'=\lambda+\mu,\quad a'=a+\mu b.\]

\subsection*{The general case: atlas for $\boldsymbol{(B_r/B_{r-1})^s}$, for $\boldsymbol{X=\C^n}$}
For the remaining of the section, we fix $X=\C^n$ and an integer $\ell\geq 2$.  We give coordinate coverings of the spaces
\[(B_r/B_{r-1})^s,\text{ with }r+s\leq \ell+1.\] These are the spaces  in a triangular diagram of the form (\ref{TSubB}) of Remark \ref{remTSubB}, having $B_\ell$ on the top. Our coverings are such that the maps 
\[(B_{r+1}/B_{r})^s\to (B_{r}/B_{r-1})^{s+1}\quad\text{and}\quad (B_{r}/B_{r-1})^s\to (B_{r}/B_{r-1})^{s-1}\]
 take an open subset to an open subset. We are mostly interested in $B_r=(B_{r}/B_{r-1})^1$, but the spaces $(B_r/B_{r-1})^s$ with $s\geq 2$ are needed for the construction. The reader may convince themselves of this by inspecting the proof of Proposition \ref{prop:atlas}.

	\begin{definition}
We say that a collection of linear forms 
$$\cL=\{L_i\colon \C^{n+1}\to \C \}_{i=1}^m$$ 
 is a \emph{covering collection} for $(\P^n)^k$ if
 \begin{enumerate}
 \item $m\geq kn+1$,
 \item any $n+1$ elements in $\cL$ are linearly independent.
 \end{enumerate}
	\end{definition}
 
  	\begin{lem}\label{lemCoveringCollection}
 If $\cL$ is a covering collection for $(\P^n)^k$, then the subsets 
 $$V_L=\{(u^{(1)},\dots,u^{(k)})\in (\P^{n})^k\mid L(u^{(i)})\neq 0,\text{ for 
all }i\leq k\},$$
with $L\in \cL$, form an open covering of $(\P^{n})^k$.
 \begin{proof}
 Assume that some point $(u^{(1)},\dots,u^{(k)})\in (\P^n)^k$ is not contained in any $V_L$, that is, for every $L\in \cL$, there is some $i\leq k$, such that $L(u^{(i)})=0$. Letting $\cL_i=\{ L\in \cL\mid L(u^{(i)})=0\}$, we have 
that $\cL=\cL_1\cup\dots\cup \cL_k$. Since $\vert \cL\vert=m>kn$, there is some 
$\cL_i$ with at least $n+1$ elements. This is in contradiction with the assumption
that any $n+1$ elements in $\cL$ are linearly independent.
 \end{proof}
  	\end{lem}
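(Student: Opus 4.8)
The plan is to verify the two defining features of an open covering separately: openness of each member $V_L$, and the covering property, the latter by a pigeonhole argument. Openness is routine. For a single linear form $L$, the subset $\{u\in\P^n\mid L(u)\neq 0\}$ is the complement of the projective hyperplane $V(L)\subset\P^n$, hence open; since $V_L$ is obtained by imposing $L(u^{(i)})\neq 0$ for each $i\leq k$ independently, it is the intersection of the $k$ preimages of this open locus under the coordinate projections $(\P^n)^k\to\P^n$, and a finite intersection of opens is open.

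The substance of the lemma is that every point is covered, which I would prove by contradiction. Suppose a point $(u^{(1)},\dots,u^{(k)})$ avoids all the $V_L$; by definition this says that for each $L\in\cL$ at least one coordinate $u^{(i)}$ satisfies $L(u^{(i)})=0$. I would then sort the forms by which coordinate they annihilate, setting $\cL_i=\{L\in\cL\mid L(u^{(i)})=0\}$, so that the previous observation reads precisely $\cL=\cL_1\cup\dots\cup\cL_k$.

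The one real step is to turn this into a contradiction using the hypotheses $m\geq kn+1$ and ``any $n+1$ forms in $\cL$ are linearly independent.'' The counting is immediate: were every $\cL_i$ of size at most $n$, their union would contain at most $kn<m$ elements, contradicting $|\cL|=m$; hence some $\cL_i$ contains at least $n+1$ forms. But every form in $\cL_i$ lies in the annihilator of the fixed nonzero vector representing $u^{(i)}$, an $n$-dimensional subspace of the dual $(\C^{n+1})^*$, so any $n+1$ of them are linearly dependent. This contradicts the covering-collection hypothesis and finishes the argument. I do not anticipate a genuine obstacle: the entire content is the pigeonhole estimate combined with the elementary fact that the linear forms vanishing at a common nonzero vector span at most an $n$-dimensional space.
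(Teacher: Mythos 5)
Your proof is correct and follows essentially the same argument as the paper's: a pigeonhole count on the sets $\cL_i=\{L\in\cL\mid L(u^{(i)})=0\}$ forces some $\cL_i$ to have at least $n+1$ elements, contradicting the linear independence hypothesis. Your only additions are the routine verification of openness and an explicit justification (via the $n$-dimensional annihilator of $u^{(i)}$ in the dual space) of the dependence of the forms in $\cL_i$, which the paper leaves implicit.
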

	
 For the remaining of the section, we fix some $r\leq \ell$ and a covering collection $\cL$ for $(\P^{n-1})^{\ell-1}$ and choose, for each $L_i\in \cL$, different linear forms $L'_1,\dots,L'_{n-1}\in \cL\setminus\{L\}$.
 
\begin{definition}
We write $\widehat L_i\colon \{u\in\P^{n-1}\mid L_i(u)\neq 0\}\to \C^{n-1}$ for the map
\[\widehat L_i(u)=\left(\frac{L'_1(u)}{L_i(u)},\dots, \frac{L'_{n-1}(u)}{L_i(u)}\right).\]
\end{definition}

Recall how the coverings for the case of  $X=\C^2$ were indexed: If we are only interested in double points, that is, if $\ell=2$, then two open open subsets indexed by $i=1,2$ are enough. If we are interested in triple points, i.e. $\ell=3$, then we covered the spaces $B_2$ and $(B_2/\C^2)^2$ by three open subsets indexed by $i=1,2,3$. The space $B_3$ was then covered by six open subsets $U_{ij}$, with multi-indices $(i,j)$, with $i=1,2,3$ and $j=1,2$.

In general, we shall define a set of multi-indices $\alpha=(\alpha_1,\dots,\alpha_{r-1})$ for our open cover of $(B_r/B_{r-1})^s$ in such a way that, for every $i\leq r$, the family  $\{L_{\alpha_i}\mid \alpha\in \cS_r\}$ is a covering collection for $(\P^{n-1})^{\ell-i-1}$. 

	\begin{definition}
For $r\geq 2$, we let $\cS_{r}^\ell$ be the set of multi-indices $\alpha=(\alpha_1,\dots,\alpha_{r-1})$, where the $\alpha_i$ are integer numbers satisfying
\begin{itemize}
\item[] $1\le\alpha_1\le (\ell-1)(n-1)+1,$
\item[] $1\le\alpha_2\le (\ell-2)(n-1)+1,$
\item[]$\quad\quad\vdots$
\item[]$1\le\alpha_{r-1}\le (\ell-r-1)(n-1)+1.$
\end{itemize}
If $r>2$, we have a map $\cS_{r}^\ell\to\cS_{r-1}^\ell$ given by 
\[(\alpha_1,\dots,\alpha_{r-1})\mapsto (\alpha_1,\dots,\alpha_{r-2}).\] 
We set $\cS_1^\ell=\{0\}$ and let $\cS^{\ell}_2\to\cS^{\ell}_1$ be the constant map.
	\end{definition}
	
From now on, we fix a positive integer $s$, satisfying $r+s\leq \ell+1$.
	\begin{definition}\label{defOpenNbhdUAlpha}
For each $\alpha\in \mathcal S_ r$, we write $U_\alpha^s$ for the subset of $\C^n\times (\P^{n-1})^{( r-1)(s-1+ r/2)}$, consisting of tuples of points
\[\left.\begin{array}{llllll} 
u^{( r-1, r+s-2)}&\cdots  &u^{( r-1, r-1)}  &  &  &  \\
\vdots&&\vdots  &\ddots  &  &   \\
u^{(1, r+s-2)}&\cdots&u^{(1, r-1)}  &\cdots    &u^{(1,1)}  &  \\
x^{( r+s-2)}&\cdots& x^{( r-1)} & \cdots  &x^{(1)}  &x
\end{array}\right.
\tag{$P$}\label{Pyramid}
\]
with $x,x^{(j)}\in \C^n$, $u^{(i,j)}\in\P^{n-1}$ and $L_{\alpha_i}(u^{(i,j)})\neq 0$, subject to the following iteratively defined conditions: First, set 
\[\gamma^{(0,0)}=x,\ \gamma^{(0,1)}=x^{(1)},\dots,\ \gamma^{(0, r+s-2)}=x^{( r+s-2)}.\]
Then, for each $1\leq i\leq  r$, set $\delta^{(i,j)}=\gamma^{(i-1,j)}-\gamma^{(i-1,i-1)}$, impose the condition
\[u^{(i,j)}\wedge \delta^{(i,j)}=0\] and, for $j=i,\dots, r+s-2$, set
\[\lambda^{(i,j)}=L_{\alpha_i}(\delta^{(i,j)})\in \C,\quad a^{(i,j)}=\widehat L_{\alpha_i}(u^{(i,j)})\in \C^{n-1},\quad \gamma^{(i,j)}=(\lambda^{(i,j)},a^{(i,j)}).\] 

	\end{definition}
The unusual placement of coordinates in (\ref{Pyramid}) is designed to match the diagram ($T_B$) of Remark \ref{remTSubB}. We refer to the columns of (\ref{Pyramid}) decreassingly from $r+s-2$ to $0$, so that the $j$th column is the one containing $x^{(j)}$.

	\begin{prop}\label{prop:atlas}
The space $(B_ r/B_{ r-1})^s$ is a glueing of the spaces $U_\alpha^s$, with $\alpha\in \cS^{\ell}_ r$. For each multi-index $\alpha\in \cS^{\ell}_ r$,  the map $(B_ r/B_{ r-1})^s\to (B_ r/B_{ r-1})^{s-1}$ restricts to the map 
\[U_\alpha^s\to U_\alpha^{s-1}\]
which drops the left column of (\ref{Pyramid}). For $r\geq 2$, the map $(B_ r/B_{ r-1})^s\to (B_{ r-1}/B_{ r-2})^{s+1}$ restricts to the map 
\[U_\alpha^s\to U_{(\alpha_1,\dots,\alpha_{ r-2})}^{s}\]
 which drops the top row of of (\ref{Pyramid}) (in the case of $ r=2$, the index $(\alpha_1,\dots,\alpha_{ r-2})$ is meant to be $0\in \cS^{\ell}_1$).
\begin{proof}We proceed by induction on $ r$. Together with the statement, we will need to show the following extra items, for a point in $U^s_\alpha$ whose coordinates form the pyramid (\ref{Pyramid}):
\begin{enumerate}
\item\label{extraitem1} For positive $j\geq  r$, the pyramid (\ref{Pyramid}) has the $j$th column equal to the $(r-1)$th column if and only if $\delta^{( r,j)}=0$. In particular, the diagonal of $(B_ r/B_{ r-1})^2$ intersects $U^2_\alpha$ at $\delta^{(r, r)}=0$. %and the exceptional divisor of $B_ r$ is given on $U^1_{\alpha}$ by $L_{\alpha_{ r-1}}(\delta^{( r-1, r-1)})=0$.
\item\label{extraitem2} For positive $j\geq  r$ and $j'\geq  r$, the pyramid (\ref{Pyramid}) has the $j$th column equal to the $j'$th column if and only if $\delta^{(r,j)}=\delta^{( r,j')}$.
\end{enumerate}
The case of $ r=1$ is trivial: For each $s\geq 1$ we consider a single set $U_0^s=\C^n\times\stackrel{s}{\dots}\times \C^n,$
with coordinates $x^{(s-1)},\dots, x^{(1)},x$. The map $(B_ r/B_{ r-1})^s\to (B_ r/B_{ r-1})^{s-1}$ drops $x^{(s-1)}$. The extra items (\ref{extraitem1}) and (\ref{extraitem2}) are obvious, since $\delta^{(1,j)}=x^{(j)}-x$ by definition.

Assume that the statement is true for $ r-1$. By item (\ref{extraitem1}) of the induction hypothesis, on each $U^2_\alpha$, with $\alpha\in \cS^{\ell}_{ r-1}$,  the diagonal of $(B_{ r-1}/B_{ r-2})^2$ is regularly embedded by the equations $\delta^{( r-1, r-1)}=0$. Consequently, over $U^2_\alpha$ the space $B_ r$ is obtained by adding a new  $u^{( r-1, r-1)}$ to the $x^{(j)}$ and $u^{(i,j)}$ of $(B_{ r-1}/B_{ r-2})^2$, and imposing the condition that $u^{( r-1, r-1)}\wedge \delta^{( r-1, r-1)}=0$. The blowup  $B_ r\to (B_{ r-1}/B_{ r-2})^2$ drops $u^{( r-1, r-1)}$ and, by the induction hypothesis,  $(B_{ r-1}/B_{ r-2})^2\to B_{ r-1}$ drops the remaining $u^{(i, r-1)}$ and $x^{( r-1)}$. Hence $B_ r\to B_{ r-1}$ drops the entire left column, and the fibered product $(B_{ r}/B_{ r-1})^s$ is obtained by adding new columns containing the $u^{(i, r+j-2)}$ and $x^{( r+j-2)}$, for $i=1,\dots, r-1$ and $j=1,\dots,s$, each subject to its corresponding condition. The number of linear forms $L_{\alpha_i}$ that the multi-indices $\alpha\in \cS^{\ell}_ r$ yield for each $1\leq i\leq  r-1$ justifies, by Lemma \ref{lemCoveringCollection}, that the open subsets $U_\alpha^s$ cover $(B_ r/B_{ r-1})^s$. It is obvious that the map defined by dropping the top row  satisfies the commutativities stated in Lemma \ref{lemTriangleK} and therefore it is the desired map $(B_ r/B_{ r-1})^s\to (B_{ r-1}/B_{ r-2})^{s+1}$.

 To show items (\ref{extraitem1}) and (\ref{extraitem2}), let $j\geq  r$ and assume that a point satisfies 
\[u^{( r-1,j)}=u^{( r-1, r-1)},\dots, u^{(1,j)}=u^{(1, r-1)}\text{ and }x^{(j)}=x^{( r-1)}.\]
On $U^s_\alpha$, the equality $u^{( r-1,j)}=u^{( r-1, r-1)}$ is equivalent to 
\[\widehat L_{\alpha_{ r-1}}(u^{( r-1,j)})=\widehat L_{\alpha_{ r-1}}(u^{( r-1, r-1)}),\] that is, to $a^{( r-1,j)}=a^{( r-1, r-1)}$.  By the induction hypothesis, applying item (\ref{extraitem2}) on $U^s_{\alpha_1,\dots,\alpha_{ r-2}}$, it follows that the equations 
\[u^{( r-2,j)}=u^{( r-2, r-1)},\dots, u^{(1,j)}=u^{(1, r-1)},x^{(j)}=x^{( r-1)}\]
  are equivalent to 
$\delta^{( r-1,j)}=\delta^{( r-1, r-1)}$. Now observe that the conditions  $u^{( r-1,j)}\wedge \delta^{( r-1,j)}=0=u^{( r-1, r-1)}\wedge \delta^{( r-1,j)}$ and $ L_{\alpha_{ r-1}}(u^{( r-1,j)})\neq 0\neq  L_{\alpha_{ r-1}}(u^{( r-1, r-1)})$ hold on $U_\alpha$. Therefore, provided that $u^{( r-1,j)}=u^{( r-1, r-1)}$, the equality $\delta^{( r-1,j)}=\delta^{( r-1, r-1)}$ is equivalent to 
\[L_{\alpha_{ r-1}}(\delta^{( r-1,j)})=L_{\alpha_{ r-1}}(\delta^{( r-1, r-1)}),\]
that is, to $\lambda^{( r-1,j)}=\lambda^{( r-1, r-1)}$.
Putting everything together, our initial conditions are equivalent to $\delta^{( r,j)}=0$, and item (\ref{extraitem1}) follows. To show item (\ref{extraitem2}), assume 
\[u^{( r-1,j)}=u^{( r-1,j')},\dots, u^{(1,j)}=u^{(1,j')},x^{(j)}=x^{(j')}.\]
On $U_\alpha$, the condition $u^{( r-1,j)}=u^{( r-1,j')}$ is equivalent to $a^{( r-1,j)}=a^{( r-1,j')}$ and, by the induction hypothesis, the rest of conditions are equivalent to $\delta^{( r-1,j)}=\delta^{( r-1,j')}$. As before, provided that $u^{( r-1,j)}=u^{( r-1,j')}$ holds, the condition that $\delta^{( r-1,j)}=\delta^{( r-1,j')}$ is equivalent to $\lambda^{( r-1,j)}=\lambda^{( r-1,j')}$, because $u^{( r-1,j)}\wedge \delta^{( r-1,j)}=0=u^{( r-1,j)}\wedge \delta^{( r-1,j)}$. Therefore, our initial conditions are equivalent to $\delta^{( r,j)}=\delta^{( r,j')}$. This finishes the proof of item (\ref{extraitem2}).
\end{proof}
	\end{prop}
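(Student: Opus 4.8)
The plan is to argue by induction on $r$, strengthening the statement by carrying along the two auxiliary items (\ref{extraitem1}) and (\ref{extraitem2}) about equality of columns in the pyramid (\ref{Pyramid}). These items are precisely what makes the induction close: item (\ref{extraitem1}), applied at level $r-1$, presents the diagonal $\Delta B_{r-1}$ inside the chart $U_\alpha^2$ of $(B_{r-1}/B_{r-2})^2$ as the regularly embedded locus $\delta^{(r-1,r-1)}=0$, which is exactly what is needed to compute the blowup $B_r=\Bl_{\Delta B_{r-1}}(B_{r-1}/B_{r-2})^2$ of Proposition \ref{propBrForManifolds} one chart at a time; item (\ref{extraitem2}) is the more flexible statement from which item (\ref{extraitem1}) is deduced. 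The base case $r=1$ is immediate: $(B_1/B_0)^s=(X/{*})^s=(\C^n)^s$ is covered by the single chart $U_0^s$, the maps drop the leftmost point, and the auxiliary items are trivial since $\delta^{(1,j)}=x^{(j)}-x$.

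For the inductive step I would proceed as follows. On each chart $U_\alpha^2$ of $(B_{r-1}/B_{r-2})^2$ the induction hypothesis exhibits the diagonal as the regularly embedded subspace cut out by $\delta^{(r-1,r-1)}=0$; blowing this up amounts to adjoining a new point $u^{(r-1,r-1)}\in\P^{n-1}$ subject to the incidence relation $u^{(r-1,r-1)}\wedge\delta^{(r-1,r-1)}=0$, and the affine charts of the resulting exceptional $\P^{n-1}$ are exactly those on which some $L_{\alpha_{r-1}}$ is non-vanishing. This produces the charts of $B_r$; taking the $s$-fold fibered product over $B_{r-1}$ (which, by the inductive description, drops the entire left column) then builds $(B_r/B_{r-1})^s$ by adjoining new columns, each carrying its own incidence condition. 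The covering property follows from Lemma \ref{lemCoveringCollection}, once one checks that the multi-indices $\alpha\in\cS_r^\ell$ supply, at each level $i$, enough linear forms $L_{\alpha_i}$ to form a covering collection for the relevant $(\P^{n-1})^k$; and the compatibility of the structure maps with dropping the left column and the top row is forced by the uniqueness in Lemma \ref{lemTriangleK}.

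The main obstacle, and the delicate part of the argument, is the verification of the auxiliary items (\ref{extraitem1}) and (\ref{extraitem2}). Here one must unwind the recursive definition of $\delta^{(i,j)},\lambda^{(i,j)},a^{(i,j)},\gamma^{(i,j)}$ and translate the geometric condition that the $j$th and $j'$th columns coincide into the equality $\delta^{(r,j)}=\delta^{(r,j')}$ (respectively $\delta^{(r,j)}=0$). The key reduction is that, on the chart $U_\alpha$, the top-level equality $u^{(r-1,j)}=u^{(r-1,r-1)}$ is equivalent to $a^{(r-1,j)}=a^{(r-1,r-1)}$ because $L_{\alpha_{r-1}}$ is non-vanishing there, while item (\ref{extraitem2}) at level $r-1$ handles the remaining entries and reduces matters to $\delta^{(r-1,j)}=\delta^{(r-1,r-1)}$. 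One then uses the incidence relations together with the non-vanishing of $L_{\alpha_{r-1}}$ to convert this into $\lambda^{(r-1,j)}=\lambda^{(r-1,r-1)}$, and assembling the two pieces yields $\delta^{(r,j)}=0$. I expect the careful bookkeeping of these chart-wise equivalences --- in particular making sure the incidence condition $u\wedge\delta=0$ combined with the non-vanishing of the chosen linear form genuinely permits passage between the projective and affine descriptions --- to be where essentially all of the work lies; once these equivalences are secured, the identification of the structure maps and the covering statement are formal.
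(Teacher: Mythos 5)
Your proposal follows the paper's own proof essentially step for step: the same induction on $r$ strengthened by the two column-equality items, the same chartwise presentation of the diagonal as $\delta^{(r-1,r-1)}=0$ feeding the blowup $B_r=\Bl_{\Delta B_{r-1}}(B_{r-1}/B_{r-2})^2$, the covering via Lemma \ref{lemCoveringCollection}, the identification of the structure maps through the uniqueness in Lemma \ref{lemTriangleK}, and the same chain of chartwise equivalences ($u^{(r-1,j)}=u^{(r-1,r-1)}$ iff $a^{(r-1,j)}=a^{(r-1,r-1)}$, then item (\ref{extraitem2}) at level $r-1$, then the incidence relation converting $\delta$-equality into $\lambda$-equality) to establish the auxiliary items. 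The bookkeeping you defer is exactly what the paper's proof carries out, and your outline of it is correct.
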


\begin{definition}
For each $L\in \cL$, we write 
$\Lambda_L\colon \C^n\to \C^n$ for the linear isomorphism 
\[x\mapsto(L(x),L'_1(x),\dots,L'_{n-1}(x)).\]
We write $\nu_L\colon \C^n\to \C^n$ for the map taking a point $\gamma=(\lambda,a),$ with $\lambda\in \C$ and $a=(a_1,\dots,a_{n-1})\in \C^{n-1}$, to the point
\[\nu_L(\gamma)=\lambda\cdot \Lambda^{-1}_L(1,a_1,\dots,a_{n-1}).\]
To simplify notation, once a multi-index $\alpha$ is fixed, we write $\nu_i=\nu_{L_{\alpha_i}}$.
\end{definition}
Reviewing the proof of Proposition \ref{prop:atlas}, it becomes clear that, for each open subset $U^s_\alpha$, the
$x,\gamma^{(1,1)},\gamma^{(2,2)},\dots,\gamma^{( r-1, r-1)},\gamma^{( r-1, r)},\dots,\gamma^{( r-1, r+s-2)}$ are subject to no relations. For $(B_3/B_2)^2$, these are marked with the symbol ``$\bullet$'' in the diagram
\[
\begin{tikzcd}[sep=1em]
\bullet\arrow[d]&\bullet\arrow[d]\\
\circ\arrow[d]&\circ\arrow[d]&\bullet\arrow[d]\arrow[l]\arrow[ll,bend left=34]\\
\circ&\circ&\circ&\bullet\arrow[lll,bend left=50]\arrow[ll,bend left=34]\arrow[l]\\
\end{tikzcd}
\]
The remaining $\gamma^{(i,j)}$, marked with ``$\circ$'', are determined by  the ``$\bullet$'' entries, by means of the relations 
\[\gamma^{(i,j)}=\gamma^{(i,i)}+\nu_{i+1}(\gamma^{(i+1,j)}),\] corresponding to the arrows in the diagram. Then, from the pyramid
\[\left.\begin{array}{llllll} 
\gamma^{( r-1, r+s-2)}&\cdots  &\gamma^{( r-1, r-1)}  &  &  &  \\
\vdots&&\vdots  &\ddots  &  &   \\
\gamma^{(1, r+s-2)}&\cdots&\gamma^{(1, r-1)}  &\cdots    &\gamma^{(1,1)}  &  \\
\gamma^{(0, r+s-2)}&\cdots& \gamma^{(0, r-1)} & \cdots  &\gamma^{(0,1)}  &x
\end{array}\right.\]
with $\gamma^{(i,j)}=(\lambda^{(i,j)},a^{(i,j)})$, the $u^{(i,j)}$ and $x^{(j)}$ are recovered by setting
\[u^{(i,j)}=[\Lambda^{-1}_{L_{\alpha_i}}(1,a_1^{(i,j)},\dots,a_{n-1}^{(i,j)})]\in \P^{n-1},\quad x^{(j)}=x+\gamma^{(0,j)}.\]
This shows how to produce charts for the open subsets $U_\alpha$:
\begin{prop}
Each $U^s_\alpha\subseteq (B_ r/B_{ r-1})^s$ is isomorphic to $(\C^n)^{ r+s-1}$, via the map that takes a point $x,x^{(j)},u^{(i,j)}$ to the point with coordinates $x,\gamma^{(1,1)},\gamma^{(2,2)},\dots,\gamma^{( r-1, r-1)},\gamma^{( r-1, r)},\dots,\gamma^{( r-1, r+s-2)}.$
\end{prop}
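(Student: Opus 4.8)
The plan is to prove the isomorphism by exhibiting the stated coordinate map together with an explicit inverse and checking that both are morphisms of complex spaces; since the preceding discussion has already shown that the free entries $x,\gamma^{(1,1)},\dots,\gamma^{(r-1,r+s-2)}$ are unconstrained and that the remaining data are determined by them, the bulk of the work is to organize those observations into two mutually inverse holomorphic maps. First I would count dimensions: each $\gamma=(\lambda,a)$ lies in $\C\times\C^{n-1}=\C^n$, and the free coordinates number $1+(r-1)+(s-1)=r+s-1$, so the target is indeed $(\C^n)^{r+s-1}$.

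For the forward map $\Phi\colon U^s_\alpha\to(\C^n)^{r+s-1}$ I would take a point with coordinates $x,x^{(j)},u^{(i,j)}$, run the iterative recipe of Definition \ref{defOpenNbhdUAlpha} to produce all the $\gamma^{(i,j)}$, and then output the tuple $(x,\gamma^{(1,1)},\dots,\gamma^{(r-1,r-1)},\gamma^{(r-1,r)},\dots,\gamma^{(r-1,r+s-2)})$. The point is that $\Phi$ is holomorphic: on $U^s_\alpha$ one has $L_{\alpha_i}(u^{(i,j)})\neq 0$ by definition, so each $\widehat L_{\alpha_i}$ is regular there, while the differences $\delta^{(i,j)}$, the linear forms $L_{\alpha_i}$ and the pairing $(\lambda,a)\mapsto\gamma$ are all regular.

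For the inverse $\Psi$ I would reverse the construction. Given a tuple $(x,\gamma^{(1,1)},\dots,\gamma^{(r-1,r+s-2)})$, I fill in the whole $\gamma$-pyramid top-down in $i$ using $\gamma^{(i,j)}=\gamma^{(i,i)}+\nu_{i+1}(\gamma^{(i+1,j)})$ (well-founded, since $\gamma^{(i,j)}$ is expressed through the free diagonal entry $\gamma^{(i,i)}$ and the entry $\gamma^{(i+1,j)}$ one row above), and then recover $u^{(i,j)}=[\Lambda^{-1}_{L_{\alpha_i}}(1,a^{(i,j)})]$ and $x^{(j)}=x+\gamma^{(0,j)}$. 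Here each $\Lambda^{-1}_{L_{\alpha_i}}(1,a)$ is a nonzero vector (as $\Lambda_{L_{\alpha_i}}$ is a linear isomorphism) whose $L_{\alpha_i}$-value is $1$, so $u^{(i,j)}$ is a well-defined point of the chart $\{L_{\alpha_i}\neq 0\}$ and $\Psi$ is holomorphic. The single algebraic fact on which everything turns is the identity $\delta^{(i,j)}=\nu_i(\gamma^{(i,j)})$: unwinding the definitions of $\nu_{L}$ and $\widehat L$, the vector $\lambda^{(i,j)}\cdot\Lambda^{-1}_{L_{\alpha_i}}(1,a^{(i,j)})$ is exactly the representative of the direction $u^{(i,j)}$ scaled so that its $L_{\alpha_i}$-value equals $\lambda^{(i,j)}=L_{\alpha_i}(\delta^{(i,j)})$. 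From this I get both that $\Psi$ lands in $U^s_\alpha$ (the reconstructed $\delta^{(i,j)}$ is parallel to the reconstructed $u^{(i,j)}$, so the wedge conditions of Definition \ref{defOpenNbhdUAlpha} hold, and $L_{\alpha_i}(u^{(i,j)})\neq 0$ holds by construction) and, combined with the bijection $u\mapsto\widehat L_{\alpha_i}(u)$ between $\{L_{\alpha_i}\neq 0\}$ and $\C^{n-1}$ (inverse $a\mapsto[\Lambda^{-1}_{L_{\alpha_i}}(1,a)]$), the equalities $\Psi\circ\Phi=\mathrm{id}$ and $\Phi\circ\Psi=\mathrm{id}$ by matching the $\gamma$'s, $u$'s and $x^{(j)}$'s entry by entry.

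The hard part will not be any deep idea but the verification that $\Psi$ genuinely takes values in $U^s_\alpha$, i.e. that the reconstructed data satisfy \emph{every} one of the iteratively imposed wedge conditions; this is exactly where the identity $\delta^{(i,j)}=\nu_i(\gamma^{(i,j)})$ and the top-down well-foundedness of the recursion $\gamma^{(i,j)}=\gamma^{(i,i)}+\nu_{i+1}(\gamma^{(i+1,j)})$ are required. I expect the careful bookkeeping of indices across the pyramid, rather than any substantial new argument, to be the main obstacle, most of the underlying content having already been extracted in the analysis following Proposition \ref{prop:atlas}.
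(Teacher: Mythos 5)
Your proposal is correct and takes essentially the same route as the paper, whose ``proof'' is just the paragraph preceding the proposition: the free entries $x,\gamma^{(1,1)},\dots,\gamma^{(r-1,r-1)},\gamma^{(r-1,r)},\dots,\gamma^{(r-1,r+s-2)}$, the recursion $\gamma^{(i,j)}=\gamma^{(i,i)}+\nu_{i+1}(\gamma^{(i+1,j)})$, and the reconstruction formulas $u^{(i,j)}=[\Lambda^{-1}_{L_{\alpha_i}}(1,a^{(i,j)})]$, $x^{(j)}=x+\gamma^{(0,j)}$ — exactly the two mutually inverse holomorphic maps you exhibit. Your key identity $\delta^{(i,j)}=\nu_i(\gamma^{(i,j)})$ is precisely the fact the paper leaves implicit when asserting that the remaining $\gamma^{(i,j)}$ are determined by the marked entries, so your write-up only makes the verification more explicit.
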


Since we are mainly interested in the spaces $B_ r=(B_r/B_{r-1})^1$, we may take $\ell=r$. We write $\cS_r=\cS_r^\ell$ and $U_\alpha=U_\alpha^1$, as well as
\[\gamma^{(i)}=\gamma^{(i,i)},\quad\lambda^{(i)}=\lambda^{(i,i)}\quad\text{and}\quad a^{(i)}=a^{(i,i)}.\]
This way $B_r$ is covered by the open subsets $U_\alpha$, with $\alpha\in \cS_ r$, and we write $\varphi_\alpha\colon U_\alpha\to (\C^n)^ r$ for the charts giving the coordinates 
\[x,\gamma^{(1)},\dots,\gamma^{( r-1)}.\]

									\section{Equations for the multiple point spaces $K_r$}\label{secEqsForKr}
Here we give an explicit set of local equations for $K_r(f)$ in the coordinates of the affine open subsets described above.
\begin{definition}\label{defGenItDivDiff}
With the previous notations, for any $\alpha\in\mathcal S_ r$ and any $f\colon \C^n\to \C^p$, we define the \emph{iterated generalised divided differences} as follows: The first \emph{generalised divided difference} is
\[f_{\alpha_1}[x,\gamma]=\dfrac{f(x+\nu_1(\gamma))-f(x)}{\lambda}\]
The $j$th iterated generalised divided difference is
\begin{align*}
&f_{\alpha_1,\dots,\alpha_j}[x,\gamma^{(1)},\dots,\gamma^{(j)}]=\\
&=\dfrac{f_{\alpha_1,\dots,\alpha_{j-1}}[x,\gamma^{(1)}\dots,\gamma^{(j-2)},\gamma^{(j-1,j)}]-f_{\alpha_1,\dots,\alpha_{j-1}}[x,\gamma^{(1)},\dots,\gamma^{(j-1)}]}{\lambda^{(j)}},\end{align*}
where $\gamma^{(j-1,j)}$ stands for the function $\gamma^{(j-1)}+\nu_{j}(\gamma^{(j)})$. We ommit the multi-indices $\alpha$ if there is no risk of confusion.
\end{definition}

	\begin{thm}\label{div-dif}
The space $K_r\cap U_\alpha$ is mapped isomorphically by $\varphi_\alpha$ to the zero locus of the ideal sheaf generated by 
\[f[x,\gamma^{(1)}],\dots ,f[x,\gamma^{(1)},\dots,\gamma^{(r-1)}].\]
	\end{thm}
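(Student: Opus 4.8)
The plan is to prove Theorem~\ref{div-dif} by reducing it, via Theorem~\ref{thmKrEqualsMr}, to an explicit computation of the operation $Z:W$ in the coordinates of $U_\alpha$. By Theorem~\ref{thmKrEqualsMr} we have $K_r = b^{-1}(K_{r-1}\times_{K_{r-2}}K_{r-1}):E$, so I would proceed by induction on $r$, assuming that in each chart $U_{(\alpha_1,\dots,\alpha_{r-2})}$ of $B_{r-1}$ the space $K_{r-1}$ is cut out by the generalised divided differences $f[x,\gamma^{(1)}],\dots,f[x,\gamma^{(1)},\dots,\gamma^{(r-2)}]$. The base case $r=2$ is a direct check: in the chart $U_{\alpha_1}$, the fibered product $X\times_Y X$ is defined by $f(x+\nu_1(\gamma))-f(x)=0$, the exceptional divisor $E$ is $\lambda=0$, and dividing the $p$ components of $f(x+\nu_1(\gamma))-f(x)$ by $\lambda$ yields exactly the first divided difference $f[x,\gamma^{(1)}]$; one must verify this quotient is the correct ideal-theoretic residual $I_{X\times_YX}:I_E$.

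For the inductive step, the key is to translate the three ingredients of $b^{-1}(K_{r-1}\times_{K_{r-2}}K_{r-1}):E$ into the pyramid coordinates of Definition~\ref{defOpenNbhdUAlpha}. First I would use the atlas structure from Proposition~\ref{prop:atlas}: in $U_\alpha^s$ the blowup map $b$ and the fibered-product structure are encoded by the relation $\gamma^{(i,j)}=\gamma^{(i,i)}+\nu_{i+1}(\gamma^{(i+1,j)})$, and the exceptional divisor $E$ is locally $\lambda^{(r-1)}=0$ (the vanishing of $\delta^{(r-1,r-1)}$, by item~(\ref{extraitem1}) of Proposition~\ref{prop:atlas}). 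The fibered product $K_{r-1}\times_{K_{r-2}}K_{r-1}$ pulls back under $b$ to the two copies of the divided-difference equations, one copy in the coordinates of each factor, namely $f[x,\gamma^{(1)},\dots,\gamma^{(r-2)}]$ and $f[x,\gamma^{(1)},\dots,\gamma^{(r-3)},\gamma^{(r-2,r-1)}]$, where $\gamma^{(r-2,r-1)}=\gamma^{(r-2)}+\nu_{r-1}(\gamma^{(r-1)})$. The residual operation then divides the difference of these two copies by $\lambda^{(r-1)}$, producing exactly the $(r-1)$th divided difference of Definition~\ref{defGenItDivDiff}, while the lower differences $f[x,\gamma^{(1)}],\dots,f[x,\gamma^{(1)},\dots,\gamma^{(r-2)}]$ survive unchanged.

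I expect the main obstacle to be the ideal-theoretic bookkeeping of the residual quotient, rather than the set-theoretic picture. Concretely, one must show that $\big(I_{b^{-1}(K_{r-1}\times_{K_{r-2}}K_{r-1})}\big):I_E$ is \emph{generated} by the lower differences together with the newly formed $(r-1)$th difference, and not merely that these have the same zero locus. The difference of the two pulled-back copies of the $(r-2)$th divided difference is divisible by $\lambda^{(r-1)}$ as a holomorphic function essentially because $\gamma^{(r-2,r-1)}-\gamma^{(r-2)}=\nu_{r-1}(\gamma^{(r-1)})$ is divisible by $\lambda^{(r-1)}$ (recall $\nu_L(\gamma)=\lambda\cdot\Lambda_L^{-1}(1,a)$), so the quotient is a genuine generator; the subtle point is the saturation argument, analogous to the one in Step~1 of the proof of Theorem~\ref{thmKrEqualsMr}, showing that dividing by the principal ideal $\langle\lambda^{(r-1)}\rangle$ does not introduce spurious generators and interacts correctly with the surviving lower differences. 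I would handle this by the same mechanism used there: since $\lambda^{(r-1)}$ is a nonzerodivisor away from $E$ and the lower differences do not involve $\lambda^{(r-1)}$ as a factor, the colon ideal splits as claimed, giving the stated generating set.
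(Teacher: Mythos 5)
Your proposal follows essentially the same route as the paper's proof: induction on $r$ via the formula of Theorem \ref{thmKrEqualsMr}, identification in the chart $(U_\alpha,\varphi_\alpha)$ of $b^{-1}(K_{r-1}\times_{K_{r-2}}K_{r-1})$ by the lower divided differences together with $f[x,\gamma^{(1)},\dots,\gamma^{(r-2)}+\nu_{r-1}(\gamma^{(r-1)})]\equiv \lambda^{(r-1)}f[x,\gamma^{(1)},\dots,\gamma^{(r-1)}]$ modulo those, followed by the colon computation with the principal ideal $\langle\lambda^{(r-1)}\rangle$ using that $\lambda^{(r-1)}$ is a nonzerodivisor modulo the lower ideal. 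The only cosmetic difference is that the paper first replaces $E$ by $b^{-1}(\Delta K_{r-1})$ and computes the quotient inside $p^{-1}(K_{r-1})$, which is precisely your observation that the colon splits because the generators of the lower ideal do not involve the variables $\gamma^{(r-1)}$.
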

	
	\begin{proof} We proceed by induction on $r$.  By definition, we have:
\begin{align*}
K_r&=b^{-1}(K_{r-1}\times_{K_{r-2}}K_{r-1}): b^{-1}(\Delta B_{r-1})\\
&=b^{-1}(K_{r-1}\times_{K_{r-2}}K_{r-1}): b^{-1}(\Delta K_{r-1}),
\end{align*}
since $(K_{r-1}\times_{K_{r-2}}K_{r-1})\cap \Delta B_{r-1}=\Delta K_{r-1}$. We can compute this quotient in $p^{-1}(K_{r-1})$ instead of $B_r$. Indeed, there are inclusions 
\[
\Delta K_{r-1}\subseteq K_{r-1}\times_{K_{r-2}}K_{r-1}\subseteq \pi_1^{-1}(K_{r-1}),
\]
where $\pi_1: B_{r-1}\times_{B_{r-2}}B_{r-1}\to  B_{r-1}$ is the projection onto the first factor, and hence
\[
b^{-1}(\Delta K_{r-1})\subseteq b^{-1}(K_{r-1}\times_{K_{r-2}}K_{r-1})\subseteq p^{-1}(K_{r-1}).
\]

The induction hypothesis states that $p^{-1}(K_{r-1})$ is defined in the coordinate system $(U_\alpha,\varphi_\alpha)$ of $B_r$ by the ideal sheaf $ I $ generated by the coordinate functions of
$$f[x,\gamma^{(1)}],\dots ,f[x,\gamma^{(1)},\dots,\gamma^{(r-2)}].
$$
Regarded as subspaces of $p^{-1}(K_{r-1})$, the spaces $b^{-1}(\Delta K_{r-1})$ and $b^{-1}(K_{r-1}\times_{K_{r-2}}K_{r-1})$ are defined in $(U_\alpha,\varphi_\alpha)$ by the ideal sheafs generated by the class of $\lambda^{(r-1)}$ and the classes of the coordinate functions of
\[
f[x,\gamma^{(1)},\dots,\gamma^{(r-2)}+\nu_{r-1}(\gamma^{(r-1)})],
\]
respectively. Observe that, modulo $ I $, we have the equality
\[
f[x,\gamma^{(1)},\dots,\gamma^{(r-2)}+\nu_{r-1}(\gamma^{(r-1)})]=\lambda^{(r-1)}f[x,\gamma^{(1)},\dots,\gamma^{(r-1)}].
\]
Since $\lambda^{(r-1)}$ is not a zero divisor in $\mathcal O_{U_\alpha}/ I $, this implies that the defining ideal sheaf of $K_r$ as a complex subspace of $p^{-1}(K_{r-1})$ is generated in $(U_\alpha,\varphi_\alpha)$ by the classes of the coordinate functions of $f[x,\gamma^{(1)},\dots,\gamma^{(r-1)}].$
\end{proof}
	\begin{ex}\label{exGenItDivDiffCor2} The double points $K_2$ of a map $f\colon \C^2\to \C^p$ are given by the vanishing of the divided differences
\begin{itemize}
\item[]$f_{1}[(x,y),\gamma^{(1)}]=\dfrac{f(x+\lambda,y+\lambda a)-f(x,y)}{\lambda},$ on the open subset $U_1$,
\item[]$f_{2}[(x,y),\gamma^{(1)}]=\dfrac{f(x+\lambda a,y+\lambda)-f(x,y)}{\lambda },$ on the open subset $U_2$.
\end{itemize}
If we wish to study triple points, then we will need to add the open subset $U_3$, where double points are given by the vanishing of
\begin{itemize}
\item[]$f_{3}[(x,y),\gamma^{(1)}]=\dfrac{f(x+\lambda ,y+\lambda (a-1))-f(x,y)}{\lambda }.$
\end{itemize}
The triple points of $f$ on $U_{11}$ are given by the vanishing of  $f_{1}[(x,y),\gamma^{(1)}]$ and the second divided difference 
\begin{align*}
&f_{11}[(x,y),\gamma^{(1)},\gamma^{(2)}]=\dfrac{f_1[(x,y),( \lambda + \mu, a + \mu b)]-f_1[(x,y),( \lambda , a )]}{ \mu }\\
&=\dfrac{\dfrac{f(x+ \lambda + \mu ,y+( \lambda +\mu)( a + \mu b))-f(x,y)}{ \lambda + \mu }-\dfrac{f(x+ \lambda ,y+ \lambda  a )-f(x,y)}{ \lambda }}{ \mu }.
\end{align*}

On $U_{12}$, triple points are given by the vanishing of $f_{1}[(x,y),\gamma^{(1)}]$ and 
\begin{align*}&f_{12}[(x,y),\gamma^{(1)},\gamma^{(2)}]=\dfrac{f_1[(x,y),(\lambda+ \mu  b , a+ \mu )]-f_1[(x,y),(\lambda, a)]}{ \mu }\\
&=\dfrac{\dfrac{f(x+ \lambda + \mu b,y+( \lambda + \mu b)( a+ \mu ))-f(x,y)}{ \lambda + \mu b}-\dfrac{f(x+ \lambda ,y+\lambda a)-f(x,y)}{ \lambda }}{ \mu },
\end{align*}
and similarly for the remaining open subsets $U_{21},U_{22},U_{31}$ and $U_{32}$ of the covering of $B_3$. This formulas are used explicitly in Example \ref{exDoblesTrifoldCone}.
	\end{ex}

%	\begin{rem}
%For any multi-germ $f\colon (X,S)\to (Y,y)$, with $S=\{x^{(1)},\dots,x^{(s)}\}$, the space $K_r(f)$ is a union of spaces of the form 
%\[K_{r_1}(f^{(1)})\times_Y\dots\times_Y K_{r_s}(f^{(s)}),\]
%where $r_1+\dots+r_s=r$.
%In a local system of coordinates \[x^{(1)},\gamma^{(1,1)},\dots,\gamma^{(1,r_1-1)},\dots\quad\dots, x^{(s)},\gamma^{(s,1)},\dots,\gamma^{(s,r_s-1)}\] of $B_r$, the space $K_r(f)$ is given by the equations $f(x^{(i)})=f(x^{(1)})$, for $i=2,\dots,s$, together with
%\begin{align*}
%f[x^{(1)},\gamma^{(1,1)}]&=\dots =f[x^{(1)},\gamma^{(1,1)},\dots,\gamma^{(1,r_1-1)}]=0,\\
%&\ \ \vdots\\
%f[x^{(s)},\gamma^{(s,1)}]&=\dots =f[x^{(s)},\gamma^{(s,1)},\dots,\gamma^{(s,r_s-1)}]=0.
%\end{align*}
%	\end{rem}

	\begin{definition}
We say that $K_r$ is \emph{dimensionally correct} if $\dim K_r=nr-p(r-1)$.
	\end{definition}

	\begin{cor}\label{cor:LCI}
Let $X^n \to Y^p$ be a map between complex manifolds. The dimension of $K_r$ is at least $nr-p(r-1)$ at any point. If $K_r$ is dimensionally correct, then it is locally a complete intersection.
\begin{proof}This follows from the fact that $K_r$ is locally defined by $p(r-1)$ equations in $B_r$
\end{proof}
	\end{cor}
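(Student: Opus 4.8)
The plan is to read off both assertions directly from Theorem~\ref{div-dif}. That theorem produces, in each coordinate chart $(U_\alpha,\varphi_\alpha)$ of the smooth ambient space $B_r$, an explicit generating set for the ideal sheaf cutting out $K_r$, namely the coordinate functions of the $r-1$ iterated generalised divided differences $f[x,\gamma^{(1)}],\dots,f[x,\gamma^{(1)},\dots,\gamma^{(r-1)}]$. First I would count these equations. Each divided difference $f[x,\gamma^{(1)},\dots,\gamma^{(j)}]$ is a $\C^p$-valued function, so its vanishing contributes exactly $p$ scalar equations; summing over $j=1,\dots,r-1$ gives $p(r-1)$ scalar equations. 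Since $B_r$ is a manifold of dimension $rn$ by Proposition~\ref{propBrForManifolds} (for $X=\C^n$; the general manifold case reduces to this locally by the remark following Proposition~\ref{propMrAndPartitions}), $K_r$ is locally the zero locus of $p(r-1)$ functions on a smooth space of dimension $rn$.

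Next I would invoke the standard lower bound on the dimension of a zero locus. At any point $z\in K_r\cap U_\alpha$, the local ring $\cO_{K_r,z}$ is the quotient of the regular local ring $\cO_{B_r,z}$ (of dimension $rn$) by an ideal generated by $p(r-1)$ elements. By Krull's height theorem (the principal ideal theorem), the codimension of any component of $V$ of $p(r-1)$ equations is at most $p(r-1)$, so every component through $z$ has dimension at least $rn-p(r-1)$. This gives the first assertion: $\dim_z K_r\ge nr-p(r-1)$ at every point $z$.

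For the second assertion, suppose $K_r$ is dimensionally correct, i.e. $\dim K_r=nr-p(r-1)$. Then at each point the codimension of $K_r$ in the smooth space $B_r$ is exactly $p(r-1)$, which equals the number of generators of its defining ideal. This is precisely the condition for $K_r$ to be a local complete intersection: the $p(r-1)$ generators form a regular sequence in the regular local ring $\cO_{B_r,z}$, so $K_r$ is cut out by a regular sequence locally, hence is a local complete intersection. I would phrase this using the codimension--minimal-number-of-generators criterion in a Cohen--Macaulay (indeed regular) ambient local ring.

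The only genuine subtlety, and the one point I would be careful to state, is the reduction of the general manifold case to $X=\C^n$, since Theorem~\ref{div-dif} is phrased for $f\colon\C^n\to\C^p$; this is handled by the local product structure of Proposition~\ref{propMrAndPartitions} and the ensuing remark, which identify $B_r(X)$ locally with $B_r(\C^n)$ and $K_r(f)$ with a fibered product of the local pieces, so the equation count and dimension bound are local and chart-independent. Beyond this bookkeeping, the argument is entirely formal: it is just the interplay between Krull's principal ideal theorem and the local-complete-intersection criterion, so I do not expect any real obstacle.
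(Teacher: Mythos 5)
Your proposal is correct and is the paper's own argument in expanded form: the paper's one-line proof rests exactly on the fact (Theorem~\ref{div-dif}) that $K_r$ is cut out by $p(r-1)$ equations in the smooth $rn$-dimensional space $B_r$, and your appeal to Krull's height theorem for the lower bound plus the standard criterion that $p(r-1)$ generators of a height-$p(r-1)$ ideal in a regular local ring form a regular sequence is the intended unpacking. The reduction from a general manifold $X$ to $\C^n$ via Proposition~\ref{propMrAndPartitions} and the subsequent remark, which you rightly flag, is likewise implicit in the paper's statement, so there is no divergence in method.
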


We finish this section by explaining how the computations can be simplified for maps of lower corank, and giving an example. Recall that two maps $f\colon X\to Y$ and $f'\colon X'\to Y'$ between manifolds are called \emph{$\cA$-equivalent} if there exist two biholomorphisms $\phi\colon X\to X'$ and $\psi\colon Y\to Y'$, such that $f'=\psi\circ f'\circ \phi^{-1}$. As a consequence of Theorem \ref{thmKrAsIntersection}, we obtain the following:

	\begin{prop}\label{propMrA-invariant}
If $f$ and $f'$ are $\cA$-equivalent maps, then $K_r(f)\cong K_r(f')$ via an isomorphism $B_r(X)\cong B_r(X')$ induced by $\phi$.
	\end{prop}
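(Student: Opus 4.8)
The plan is to realize the $\cA$-equivalence as a single square in $A'(\cC)$ and feed it to Theorem \ref{thmKrAsIntersection}, then upgrade the resulting intersection to an honest isomorphism using that $\phi$ is invertible. The relation $f'=\psi\circ f\circ\phi^{-1}$ says exactly that the diagram
\[\begin{tikzcd}
X \arrow["f"]{r} \arrow[hookrightarrow,"\phi"]{d}& Y\arrow[hookrightarrow,"\psi"]{d}\\
X'\arrow["f'"]{r}& Y'
\end{tikzcd}\]
commutes, and since the biholomorphisms $\phi$ and $\psi$ are in particular closed embeddings, this square meets the hypotheses of Theorem \ref{thmKrAsIntersection}. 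Applying that theorem gives $K_r(f)=K_r(f')\cap B_r(X)$, the intersection taken inside $B_r(X')$, where $K_r(f)\hookrightarrow B_r(X)\hookrightarrow B_r(X')$ is the canonical embedding and $B_r(X)\hookrightarrow B_r(X')$ is the map induced by $\phi$ via Proposition \ref{propUnivMultSpaces}, item \ref{propUnivMultSpacesItem2}.

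The key step is to check that this embedding $B_r(X)\hookrightarrow B_r(X')$ is an isomorphism. For this I would apply the functoriality in Proposition \ref{propUnivMultSpaces} to both $\phi$ and $\phi^{-1}$: these yield embeddings $B_r(X)\hookrightarrow B_r(X')$ and $B_r(X')\hookrightarrow B_r(X)$ whose two composites are, by the functoriality statement, the embeddings induced by $\phi^{-1}\circ\phi=\mathrm{id}_X$ and $\phi\circ\phi^{-1}=\mathrm{id}_{X'}$, hence the respective identities. Thus the two embeddings are mutually inverse, and $\phi$ induces an isomorphism $B_r(X)\cong B_r(X')$.

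It remains to conclude. Since $B_r(X)\hookrightarrow B_r(X')$ is an isomorphism, intersecting with it changes nothing, so $K_r(f')\cap B_r(X)=K_r(f')$ as subspaces of $B_r(X')$; combined with the previous identification, the image of $K_r(f)$ in $B_r(X')$ is precisely $K_r(f')$. Pulling this back along the isomorphism $B_r(X)\cong B_r(X')$ shows that the latter restricts to an isomorphism $K_r(f)\cong K_r(f')$ compatible with the embeddings into the universal spaces, which is the assertion. I expect the only point needing genuine care to be the verification that $B_r(X)\to B_r(X')$ is an isomorphism; everything else is formal once Theorem \ref{thmKrAsIntersection} is in hand. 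A slightly shorter route, bypassing the intersection altogether, would be to apply item \ref{itemIsomorphicKr} of Proposition \ref{propFunctorialityK_k} directly to the same square, which at once gives that $K_r(f)\hookrightarrow K_r(f')$ is an isomorphism.
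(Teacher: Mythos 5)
Your proof is correct and takes essentially the same route as the paper, which states this proposition precisely as a consequence of Theorem \ref{thmKrAsIntersection} applied to the square formed by $\phi$ and $\psi$ (biholomorphisms being in particular closed embeddings). Your explicit verification that $B_r(X)\hookrightarrow B_r(X')$ is an isomorphism --- via functoriality applied to $\phi$ and $\phi^{-1}$, or more directly via item \ref{itemIsomorphicKr} of Proposition \ref{propFunctorialityK_k} --- simply fills in a detail the paper leaves implicit.
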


A map $F\colon T\times X\to T\times Y$  of the form $F(x,t)=(t,f_t(x))$ is said to be an \emph{unfolding} of the map $f_{t_0}\colon X\to Y$, for each $t_0\in T$. The manifold $T$ is called the parameter space. Recall that, by Proposition \ref{propKrUnfolding}, the multiple point space $K_r(F)$ is canonically embedded in $T\times B_r(\C^n)$, and $K_r(F)\cap \{t=t_0\}=K_r(f_{t_0})$. One checks easily that $K_r(F)$ is computed as follows:

	\begin{prop}\label{prop:unfoldings}
Let $F\colon T\times \C^n\to T\times \C^p$ be an unfolding of the form $F(t,x)=(t,f_t(x))$ and fix a covering collection of $(\P^{n-1})^{r-1}$. In each of the open subsets $T\times U_\alpha$, $\alpha\in \cS_r$, the multiple point space $K_r(F)$ is given by the vanishing of  
\[f_t[x,\gamma^{(1)}],\dots, f_t[x,\gamma^{(1)},\dots,\gamma^{(r-1)}].\]
	\end{prop}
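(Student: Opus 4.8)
The plan is to reduce to Theorem \ref{div-dif} by splitting $F$ according to its two target factors and then restricting. Write $F=(\pi_T,g)$, where $\pi_T\colon T\times\C^n\to T$ is the projection onto the parameter space and $g\colon T\times\C^n\to\C^p$ is the map $g(t,x)=f_t(x)$, so that $F=(\pi_T,g)\colon T\times\C^n\to T\times\C^p$. By Proposition \ref{propKrCoordinatewise}, the multiple point space splits as
\[
K_r(F)=K_r(\pi_T)\cap K_r(g),
\]
the intersection being taken inside $B_r(T\times\C^n)$. Here $K_r(\pi_T)=T\times B_r(\C^n)$ by Proposition \ref{propK_rProjection}, and this is exactly the embedding $T\times B_r(\C^n)\hookrightarrow B_r(T\times\C^n)$ realizing the ambient space of Proposition \ref{propKrUnfolding}; it is covered by the open subsets $T\times U_\alpha$. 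Thus it only remains to describe $K_r(g)$ after restriction to this subspace.

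First I would apply Theorem \ref{div-dif} to the map $g\colon T\times\C^n\to\C^p$ (working locally in the parameter, so that $T\cong\C^k$ and $T\times\C^n\cong\C^{k+n}$): the space $K_r(g)$ is cut out of $B_r(T\times\C^n)$ by the iterated generalised divided differences of $g$. The essential point is then to evaluate these along the subspace $K_r(\pi_T)=T\times B_r(\C^n)$. Since $F$ preserves the parameter, the $r$ points underlying any multiple point of $\pi_T$ share a common value of $t$, and, as in Proposition \ref{propK2ForSmooth} with $\ker d\pi_T=\{0\}\times\C^n$, the limiting directions all lie in the $\C^n$ factor. Consequently, in the chart $T\times U_\alpha$ the increments $\nu_i(\gamma^{(i)})$ have vanishing $T$-component, the variable $t$ stays constant throughout the recursive formation of the divided differences, and each iterated divided difference of $g$ restricts to the corresponding $f_t[x,\gamma^{(1)},\dots,\gamma^{(j)}]$. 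This yields the stated equations.

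The hard part will be the bookkeeping in this last step: matching the charts $T\times U_\alpha$, built from a covering collection of $(\P^{n-1})^{r-1}$, with charts of the larger space $B_r(T\times\C^n)$, which a priori requires a covering collection of $(\P^{k+n-1})^{r-1}$. What saves the argument is precisely that the restriction to $K_r(\pi_T)$ confines the projective directions to $\P^{n-1}\subset\P^{k+n-1}$, so the $\C^n$-covering collection already covers $T\times B_r(\C^n)$, and the divided differences of $g$, restricted there, involve only $\C^n$-increments and hence collapse to the fibrewise divided differences $f_t[\,\cdot\,]$. Once this identification is checked the conclusion is immediate; alternatively, one can sidestep the chart comparison by repeating verbatim the induction of Theorem \ref{div-dif}, carrying $t$ as an inert spectator throughout, which is legitimate because every space appearing in that induction acquires a harmless factor $T$ by Proposition \ref{propK_rProjection}.
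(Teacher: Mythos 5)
Your proposal is correct and takes essentially the paper's own route: the paper leaves this as ``one checks easily,'' the intended check being exactly your spectator-$t$ rerun of Theorem \ref{div-dif} after the embedding $K_r(F)\subseteq T\times B_r(\C^n)$ of Proposition \ref{propKrUnfolding}, whose proof in the paper is precisely your decomposition $K_r(F)=K_r(\pi_T)\cap K_r(g)$ via Propositions \ref{propK_rProjection} and \ref{propKrCoordinatewise}. Your chart bookkeeping is also sound, since extending each form $L\in\cL$ by zero on $T$ yields charts of $B_r(T\times\C^n)$ adapted to the coordinates of $T\times U_\alpha$, and the construction of Definition \ref{defOpenNbhdUAlpha} together with the (chart-local) proof of Theorem \ref{div-dif} never uses the covering-collection property except to cover the locus of interest, which here is $T\times B_r(\C^n)$.
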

	
	\begin{definition}
In the setting above, we call $f_t[x,\gamma^{(1)},\dots,\gamma^{(s)}]$ the \emph{relative divided differences} of $f_t(x)$.
	\end{definition}

\begin{rem}
	 If $f$ has corank $k$ at $x$ and $\dim X=n$, then locally $f$ is $\cA$-equivalent to an $(n-k)$-parameter unfolding. In particular, the relative divided differences of double and triple points of maps of corank two maps are as in Example \ref{exGenItDivDiffCor2}.
\end{rem}
 For a corank one map germ $f\colon (\C^n,0)\to (\C^p,0)$ we obtain the normal form
		\[(x_1,\dots, x_{n-1},y)\mapsto (x_1,\dots,x_{n-1},f_n(x,y),\dots,f_p(x,y)).\] 
The multiple points $K_r(f)$ of a map in such form can be embedded in $\C^{n-1}\times B_r(\C)=\C^{n-1}\times \C^r$. In this case, our atlas consists on a single chart, and the relative divided differences are the following expressions:
\begin{align*}
f[x,y,y^{(1)}]&=\dfrac{f(x,y^{(1)})-f(x,y)}{y^{(1)}-y},\\
f[x,y,y^{(1)},y^{(2)}]&=\dfrac{f[x,y,y^{(2)}]-f[x,y,y^{(1)}]}{y^{(2)}-y^{(1)}},\\
&\ \ \vdots\\
f[x,y,\dots, y^{(r-1)}]&=\dfrac{f[x,y,y^{(1)},\dots,y^{(r-3)},y^{(r-1)}]-f[x,y,y^{(1)},\dots,y^{(r-2)}]}{y^{(r-1)}-y^{(r-2)}}.
\end{align*}
These expressions were introduced by Marar and Mond's in \cite{MararMondCorank1} as equations for their multiple point space $D^r(f)$ of a corank one map. Consequently, we obtain the following result.

\begin{cor}\label{corItDivDiff}
 If $f$ is a corank one map, then $K_r(f)$ and $D^r(f)$ are equal.
\end{cor}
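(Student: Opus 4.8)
The plan is to reduce, by $\cA$-invariance, to the corank one normal form, to realize it as an unfolding over a one-dimensional base, and then to read off from Proposition \ref{prop:unfoldings} that the equations defining $K_r$ are precisely Marar and Mond's iterated divided differences.

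First I would invoke Proposition \ref{propMrA-invariant}: both $K_r(f)$ and $D^r(f)$ depend on $f$ only up to $\cA$-equivalence (for $D^r$ this is classical), compatibly with the isomorphism of ambient spaces induced by a source change of coordinates. Hence it suffices to treat $f$ in the displayed normal form $(x,y)\mapsto(x,f_n(x,y),\dots,f_p(x,y))$, with $x=(x_1,\dots,x_{n-1})$ the parameter and $y$ spanning the one-dimensional kernel direction. As observed in the preceding remark, such an $f$ is an unfolding $F(x,y)=(x,g_x(y))$ of the one-variable family $g_x\colon\C\to\C^{p-n+1}$, with parameter space $T=\C^{n-1}$.

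The decisive feature of corank one is that the base source is a curve: the projective spaces $\P^{0}$ entering the construction of $B_r(\C)$ are points, so the projective factors collapse, $B_r(\C)=\C^r$, and by Proposition \ref{prop:atlas} the atlas reduces to a single chart. Applying Proposition \ref{prop:unfoldings} to $F$, whose fibre coordinate I call $y$, I obtain that $K_r(F)$ lies in $\C^{n-1}\times B_r(\C)=\C^{n-1}\times\C^r$ and is cut out there by the relative divided differences of the family. Since the fibre is one-dimensional, the coordinates $a^{(i)}$ are void and the unique covering form may be taken to be the identity, so each $\nu_i$ is the identity, each $\gamma^{(i)}=\lambda^{(i)}$ is a scalar, and the chart carries the increment coordinates $(y,\lambda^{(1)},\dots,\lambda^{(r-1)})$.

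It then remains to match these relative divided differences with the classical defining equations of $D^r(f)$, and this is the only point requiring a computation. Tracing the pyramid (\ref{Pyramid}) shows $\lambda^{(i)}=y^{(i)}-y^{(i-1)}$ (with $y^{(0)}=y$), so passing to the absolute positions $y^{(j)}$ is a triangular, hence invertible, change of variables; an immediate induction on $j$ then identifies the $j$th relative divided difference with the $j$th classical iterated divided difference $f[x,y,y^{(1)},\dots,y^{(j)}]$ displayed above. As these are exactly the equations defining $D^r(f)$ inside $X^r$ (the trivial relations $x^{(i)}=x$ identifying the relevant locus of $X^r$ with $\C^{n-1}\times\C^r$), the two subspaces are defined by the same ideal in the same chart, and $K_r(f)=D^r(f)$. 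I expect this last verification --- that the increment-to-position change of variables carries one family of divided differences onto the other --- to be the only real content, everything else being a formal consequence of the collapse of the projective factors of $B_r$ in the corank one case.
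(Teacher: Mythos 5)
Your proposal is correct and follows essentially the same route as the paper, which likewise reduces via Proposition \ref{propMrA-invariant} to the corank one normal form, views it as an $(n-1)$-parameter unfolding so that Proposition \ref{prop:unfoldings} places $K_r(f)$ in $\C^{n-1}\times B_r(\C)=\C^{n-1}\times\C^r$ with a single chart, and identifies the relative divided differences with Marar--Mond's equations for $D^r(f)$. Your only addition is to spell out the triangular change of variables $\lambda^{(i)}=y^{(i)}-y^{(i-1)}$ from increment to position coordinates, which the paper leaves implicit when it displays the divided differences directly in the $y^{(j)}$ variables.
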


For arbitrary corank, a space $D^2(f)\subseteq X\times X$ was introduced by Mond in \cite{MondSomeRemarks}. A general construction of  multiple point spaces $D^r(f)\subseteq X^r$ was given by the authors in \cite{Nuno-Ballesteros2015On-multiple-poi}.

\begin{comment}
x^2*y^2-2*x*y*z^2+z^4-a^2*x^3-2*x^4*y-a^2*y^3-2*x*y^4-4*a*x^3*z-a^2*x*y*z-
8*x^2*y^2*z+4*a*y^3*z-2*x^3*z^2-2*y^3*z^2+3*a^2*z^3-a^3*x^3+x^6+a^4*x*y-4*a*x^4
*y+3*a^2*x^2*y^2+a^3*y^3-2*x^3*y^3+4*a*x*y^4+y^6+a^2*x^3*z+a^2*y^3*z+3*a^4*z^2+
3*a^2*x*y*z^2+a^6*z-a^3*x^3*z+3*a^4*x*y*z+a^3*y^3*z
\end{comment}
%x^2*y^2-2*x*y*z^2+z^4-2*a^2*x^2*y-4*a*x^3*y-2*x^4*y-2*a^2*x*y^2-4*a*x*y^3-
%2*x*y^4-8*a^2*x*y*z-8*a*x^2*y*z-8*a*x*y^2*z-8*x^2*y^2*z-2*a^2*x*z^2-4*a*x^2*z^2
%-2*x^3*z^2-2*a^2*y*z^2-4*a*y^2*z^2-2*y^3*z^2+a^4*x^2+4*a^3*x^3+6*a^2*x^4+4*a*x^
%5+x^6-2*a^4*x*y-4*a^3*x^2*y-2*a^2*x^3*y+a^4*y^2-4*a^3*x*y^2-8*a^2*x^2*y^2-4*a*x
%^3*y^2+4*a^3*y^3-2*a^2*x*y^3-4*a*x^2*y^3-2*x^3*y^3+6*a^2*y^4+4*a*y^5+y^6
\begin{ex}\label{exDoblesTrifoldCone}
We are going to compute the spaces $K_2$ and $K_3$ of the map $f\colon\C^3\to\C^4$ given by
\[
(t,x,y)\mapsto(t,x^2+ty,y^2-tx,x^3+y^3+xy).
\]
The map $f$ is a one-parameter unfolding and, topologically, the real versions of the maps $f(0,x,y)$ and  $f(\epsilon,x,y),\epsilon\neq 0$ are as depicted in Figure \ref{figC2xC2Orbitmap}.
 \begin{figure}
\begin{center}
\includegraphics[scale=0.8]{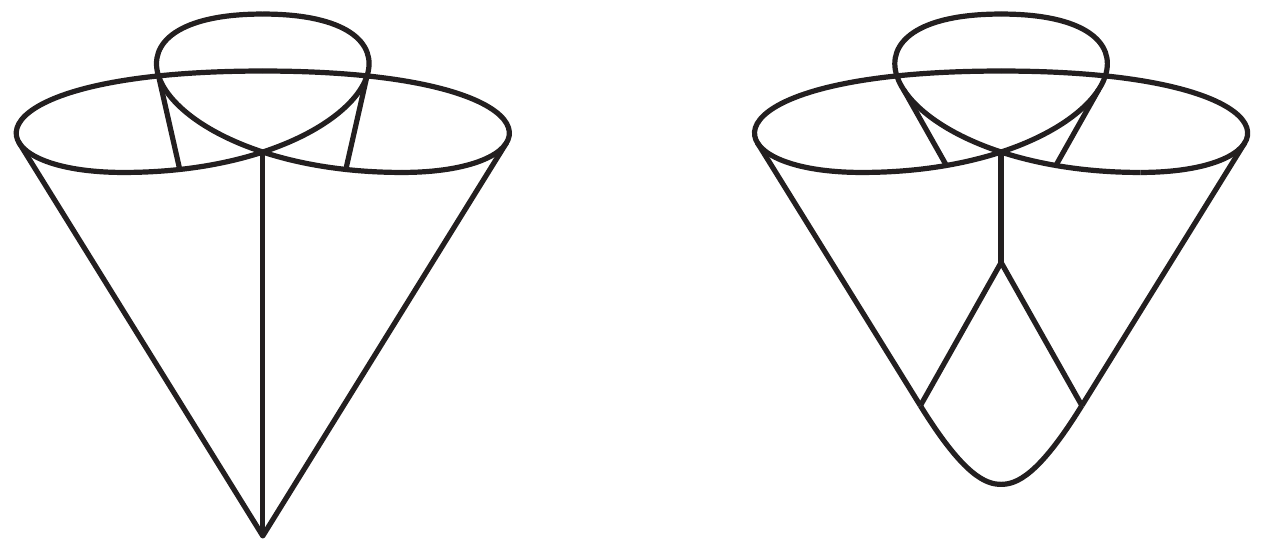}
\end{center}
\caption{Maps unfolded by $f$.}
\label{figC2xC2Orbitmap}
\end{figure}
One checks easily that $f$ has an isolated point of corank 2 at the origin.  Note the triple point in the image of the generic map $f(\epsilon,x,y)$, collapsing to the origin as $\epsilon$ tends to zero. 
 
   Since $f$ is an unfolding, Proposition \ref{prop:unfoldings} ensures that we may compute $K_r$ as a subspaces of $\C\times B_r(\C^2)$ by means of the relative divided differences. As already mentioned, the expressions from Example \ref{exGenItDivDiffCor2} compute double and triple points of maps of corank two, leaving $t$ as a parameter. In what follows, the notation for the atlas and divided differences is taken from there.

In the chart $\varphi_1\colon U_1\to\C^5$, with coordinates $(t,x,y,\lambda,a)$, the equations for $K_2\cap U_1$ are the vanishing of the divided differences of $f_2,f_3$ and $f_4$:
\begin{align*}\label{div-dif-trifold}
0&=a t+\lambda+2 x\\
0&=a^2 \lambda+2 a y-t\\
0&=a^3 \lambda^2+3 a^2 \lambda y+a \lambda+a x+3 a y^2+\lambda^2+3 \lambda x+3x^2+y.
\end{align*}
The computations can be performed with the library  {\tt IteratedMultPoint.lib} for  {\sc Singular}, by means of the sequence of commands
\begin{itemize}
\item[]{\tt LIB IteratedMultPoint.lib;}
\item[]{\tt ring r=0,(t,x,y),dp;}
\item[]{\tt list f=t,x2+ty,y2-tx,x3+y3+xy;}
\item[]{\tt ring S2=ItMP(f,2);}
\item[]{\tt L[1];}
\end{itemize}
The  instruction {\tt ItMP(f,2)} returns a ring with variables {\tt t,x,y,l(1),a(1)}, containing a list {\tt L}. The first entry {\tt L[1]} has the equations of $K_2$ on the open subsets $U_1$. The variables {\tt l(1),\tt a(1)} correspond to $\lambda$ and $a$. The space $K_2\cap U_1$ has dimension $2$ and thus $M_2$  is dimensionally correct and a complete intersection on on $U_1$. The projection $K_2\cap U_1\to \C\times(\C^2)^2,$ with coordinates $(t,(x,y),(x',y'))$, is described by 
\[x'=x+\lambda,\quad y'=y+\lambda a.\]

In order to cover $K_2$,  the divided differences on $U_2$ must be computed. We omit them, as they do not yield anything new. The equations for $K_2\cap U_2$ on {\sc Singular} are the content of the entry {\tt L[2]}.

Now we move to the computation of triple points $K_3$. As part of the process for triple points, we must compute double points in the extra open subset $U_3$. Again, this computation is uninteresting and omitted. 

% with only three singular points appearing at $t=x=y=\lambda=0$ and $a^3=1$. Computations with $\varphi_2\colon U_2\to\C^5$ are analogous and give exactly the same singular points. As above, the projection $M_2\to D^2$ in $U_1$ is obtained as
%\[x'=x+\lambda,\quad y'=y+\lambda a,\]
%which allows to obtain explicit equations for $D^2$ in $\C\times\C^2\times \C^2$. We do not list here these equations, but it is easy to check that $D^2$ is a normal surface with only one singularity at the origin, corresponding to the non-immersive point of $f$. In this case, the projection $M_2\to D^2$ gives a modification which transforms the normal surface singularity into three singularities which are ICIS.

We proceed now to the computation of the triple point space $K_3$ on the chart $\varphi_{11}\colon U_{11}\to\C^7$ of $\C\times B_3(\C^2)$, with coordinates $(t,x,y,\lambda,a,\mu,b)$. To the three equations  for $K_2$ on $U_1$ found above,  the vanishing of the following second divided differences must be added:
\begin{align*}
0&=b t+1,\\
0&=a^2+2 a b \lambda+2 a b \mu+b^2 \lambda \mu+b^2 \mu^2+2 b y,\\
0&=2 a^3 \lambda+a^3 \mu+3 a^2 b \lambda^2+6 a^2 b \lambda \mu+3 a^2 b \mu^2+3 a^2 y+3 a b^2 \lambda^2 \mu\\
&+6 a b^2 \lambda \mu^2+3 a b^2 \mu^3+6 a b \lambda y+6 a b \mu y +a+b^3 \lambda^2 \mu^2+2 b^3 \lambda \mu^3\\
   &+b^3 \mu^4+3 b^2 \lambda \mu y+3 b^2 \mu^2 y+b \lambda+b \mu+b x+3 b y^2+2 \lambda+\mu+3.
\end{align*}

These triple points are computed on {\sc Singular} by means of the sequence
\begin{itemize}
\item[]{\tt LIB IteratedMultPoint.lib;}
\item[]{\tt ring r=0,(t,x,y),dp;}
\item[]{\tt list f=t,x2+ty,y2-tx,x3+y3+xy;}
\item[]{\tt ring S3=ItMP(f,3);}
\item[]{\tt L[1][1];}
\end{itemize}

This time {\tt ItMP(f,3)} returns a ring with variables {\tt t},{\tt x},{\tt y},{\tt l(1)},{\tt l(2)},{\tt a(1)},{\tt a(2)}, together with a list whose entries {\tt L[}$i${\tt ]}{\tt[}$j${\tt ]} contain equations for the spaces $K_3\cap U_{ij}$, for $1\leq i\leq 3$ and $1\leq j \leq 2$.

\textsc{Singular} can also be used to check that $K_3\cap U_{11}$ has dimension 1, and hence it is dimensionally correct and a complete intersection.
Explicit equations for the projection of $K_3\cap U_{11}$ on $\C\times(\C^2)^3$ are obtained by putting
\[x'=x+\lambda,\quad y'=y+\lambda a,\quad x''=x+\lambda+\mu,\quad y''=y+(\lambda+\mu)(a+\mu b).\]

The situation changes when we compute $K_3$ on $U_{12}$. On this open subset, the first divided differences are the same, but the iterated ones are
\begin{align*}
0&=2 x+\lambda \mu+\mu^2 b+2 \lambda a +2 \mu a  b+a ^2 b\\
0&=-t+b\\
0&=3 x^2+3 x \lambda \mu+3 x \mu ^2 b+6 x \lambda a +6 x \mu  a  b+3 x a^2 b+3 y b
+y\\
&+\lambda^2 \mu ^2+2 \lambda \mu ^3 b+\mu ^4 b^2+3 \lambda^2 \lambda a +6 \lambda \mu ^2 a  b+3 \mu ^3 a b^2+3 \lambda^2 a ^2\\
&+6 \lambda \mu a ^2 b+3 \mu ^2 a ^2 b^2+2 \lambda a ^3 b+2 \lambda b+\lambda+\mu  
a ^3 b^2+\mu  b^2+\mu  b+a  b
\end{align*}
The equations of $K_3$ on  $U_{12}$ are contained in {\tt L[1][2]} and a computation with \textsc{Singular} shows that $K_3\cap U_{12}$ has dimension two. As a consequence, $K_3$ is not dimensionally correct. The image of the projection $K_3\cap U_{12}\to \C\times(\C^2)^3$ is obtained by putting
\[x'=x+\lambda,\quad y'=y+\lambda a,\quad x''=x+\lambda+\mu b,\quad y''=y+(\lambda+\mu b)(a+\mu).\]

Somehow surprisingly, the images of $K_3\cap U_{11}$ and $K_3\cap U_{12}$ on $\C\times(\C^2)^3$ are the same, despite coming from spaces of different dimensions (again, this can be checked with {\sc Singular}). A moment of thought will convince the reader of the fact that this implies that $K_3$ has an irreducible component contained in the exceptional divisor of $B_3\to B_2\times_{\C^3}B_2$. This and related pathologies are explained in the next section.
\end{ex}

\begin{rem}
When using the library {\tt IteratedMultPoint.lib} on an $s$-parameter unfolding $f$, it is convenient to introduce the $s$ parameters in the front of the list of polynomials defining $f$. This way, the procedure {\tt ItMP(f,r);} makes computations in $\C^s\times B_r(\C^{n-s})$, as indicated in Proposition \ref{prop:unfoldings}. If, for example, we were to reorder the coordinate functions of the previous example as {\tt list f=x2+ty,y2-tx,x3+y3+xy,t}, the equations will be given in $B_r(\C^3)$. The equations and the coverings would still be correct, but more complicated.
\end{rem}

\begin{rem}The computation of {\tt ItMP(f,r)} involves choosing a covering collection for $(\P^{n-1})^{r-1}$, which the procedure does internally. A different collection will give the same space $K_r$, but may result in very different covering and equations.
\end{rem}

									\section{Pathologies}\label{secPathologies}
									
As Kleiman observes in \cite{KleimanMultiplePointFormulasI}, the idea that $K_r$ is the double point space of $K_{r-1}\to K_{r-2}$ is just a definition, with a clear interpretation only for strict multiple points. As it tuns out, with the presence of points of corank $\geq 2$ the iteration principle may yield too many points, and may do so in a non symmetrical way. These pathologies come as no surprise; the excess of dimension and the fact that $K_r$ and the target multiple points disagree are somehow easy set-theoretical considerations, while the lack of symmetry was already pointed out  by Ran in \cite[Section 1]{Ran1985}. Our explicit description of $K_r$ just allows us to be more precise about them, which will be crucial for results in sequel of this work \cite{SecondIterated}.

\subsection*{Excess of dimension}
Assume that a map $f\colon X\to Y$ between manifolds has corank $\geq 2$ at $x\in X$. Following the description of double points in Proposition \ref{propK2ForSmooth}, we may take any two different points $u^{(1)},u^{(2)}\in \P (\ker df_x)$ to produce two double points $(x,x,u^{(1)})$ and $(x,x,u^{(2)})$. Since the map $B_2\to X$ drops the $u^{(i)}$, these two points form a point in $K_2\times_{K_1}K_2$ away from $\Delta B_2$. This point is the image of a point in $B_3\to B_2\times_{B_1}B_2$, which locally is an isomorphism. Since this preimage is not contained in the exceptional divisor, it is contained in $K_3=b^{-1}(K_2\times_{K_1} K_2):b^{-1}(\Delta B_2)$. 

 Summarising, any pair of different points $u,u'\in \P(\ker df_x)$ produces a triple point, with no further conditions on $u, u'$. The argument carries on to higher multiple spaces; the following result counts exactly how many points are obtained that way. 

	\begin{prop}\label{propDimDiagFiberCorank}
Let $f$ be a map between manifolds, let $x$ be a point where the corank of $f$ is $k\geq 2$ and let $r\geq 2$. The preimage of $(x,\dots,x)\in X^r$ by the map $K_r\to X^r$ has dimension $(r-1)(k-1)$.
\begin{proof}First we show that the dimension is at least $(r-1)(k-1)$, by showing that any tuple $(u^{(1)},\dots,u^{(r-1)})$ of different points $u^{(i)}\in \P (\ker df_x)$ can be identified with a point in $K_r$ mapping to $(x,\dots,x)$.  The case of $r\geq 2$ was shown above. The case of $r\geq 3$ is analogous since, as one can check easily, a tuple $(u^{(1)},\dots,u^{(r-1)})$ with all $u^{(i)}$ different determines a unique point in $B_r$, and the map $B_r\to B_{r-1}$ just drops the last component $u^{(r-1)}$.

Now we show that the dimension is at most $(r-1)(k-1)$. As a consequence of Proposition \ref{propMrAndPartitions}, it suffices to show the claim for a map germ $f\colon (\C^n,0)\to (\C^p,0)$ of corank $k$, which by Proposition \ref{propFunctorialityK_k} may be taken of the form 
\[f(x)=(x_1,\dots,x_{n-k},h(x)).\]
Following Proposition \ref{propKrUnfolding},the space $K_r$ can be embedded in $\C^{n-k}\times B_r(\C^k)$. Under this embedding, the points mapping to $(x,\dots,x)$ we are looking for now correspond to points mapping to 
\[(x_1,\dots,x_{n-k},(y,\dots,y)),\] where $y=x_{n-k+1},\dots, x_{n}$. Therefore, the dimension we want to compute is at most the dimension of the preimage of $(y,\dots,y)$ by the map $B_r(\C^k)\to (\C^k)^r$. In any of the charts from Section \ref{secCoordinatesForBr}, this set is given by $\lambda^{(i)}=0$. The free $a^{(i)}$, with $i=1,\dots,r-1$, give a fiber of dimension $(r-1)(k-1)$.
\end{proof}
	\end{prop}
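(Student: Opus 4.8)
The plan is to establish the claimed dimension $(r-1)(k-1)$ by proving matching lower and upper bounds for the fiber of $K_r\to X^r$ over the totally diagonal point $(x,\dots,x)$.

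\medskip

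For the lower bound, I would exhibit an explicit $(r-1)(k-1)$-dimensional family of points in $K_r$ mapping to $(x,\dots,x)$. The idea, already indicated in the discussion preceding the statement, is that any tuple $(u^{(1)},\dots,u^{(r-1)})$ of pairwise distinct points of $\P(\ker df_x)$ lifts to a genuine point of $K_r$. First I would verify this for $r=2$ directly from Proposition \ref{propK2ForSmooth}: a point of $\P(\ker df_x)$ gives a diagonal double point $(x,x,u)$. Then I would argue inductively. The key geometric fact is that a tuple of \emph{distinct} $u^{(i)}$ determines a unique point of $B_r$ lying away from (the preimage of) the relevant diagonals, because the structure maps $B_r\to B_{r-1}$ only forget the last projective coordinate $u^{(r-1)}$. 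Since such a point avoids the exceptional divisor $E$, and since $K_r=b^{-1}(K_{r-1}\times_{K_{r-2}}K_{r-1}):E$ by Theorem \ref{thmKrEqualsMr}, the residuation by $E$ does not remove it; so it suffices to check that the point lies in $b^{-1}(K_{r-1}\times_{K_{r-2}}K_{r-1})$, which follows from the inductive hypothesis that the truncated tuple lifts into $K_{r-1}$. As the space of such distinct tuples in $\P(\ker df_x)\cong\P^{k-1}$ has dimension $(r-1)(k-1)$, this yields the lower bound.

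\medskip

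For the upper bound I would reduce to a model computation. By Proposition \ref{propMrAndPartitions}, the local structure of $K_r$ at a point of $B_r$ lying over the totally diagonal point is governed by a single germ, so it suffices to treat a map germ $f\colon(\C^n,0)\to(\C^p,0)$ of corank $k$. Using Proposition \ref{propFunctorialityK_k} (invariance under $\cA$-equivalence and the normal form for coranks) I would put $f$ in the form $f(x)=(x_1,\dots,x_{n-k},h(x))$, exhibiting $f$ as an $(n-k)$-parameter unfolding of a corank-$k$ germ in $k$ variables. By Proposition \ref{propKrUnfolding}, $K_r$ embeds in $\C^{n-k}\times B_r(\C^k)$, and the fiber over $(x,\dots,x)$ is contained in the fiber of the universal map $B_r(\C^k)\to(\C^k)^r$ over the totally diagonal point $(y,\dots,y)$. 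In the explicit charts of Section \ref{secCoordinatesForBr}, the condition of lying over the diagonal is precisely $\lambda^{(i)}=0$ for all $i$, leaving the $a^{(i)}\in\C^{k-1}$, $i=1,\dots,r-1$, free; this cuts out a set of dimension exactly $(r-1)(k-1)$, giving the upper bound.

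\medskip

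The main obstacle, I expect, is the inductive step of the lower bound: making precise that a tuple of distinct directions lifts to a point of $K_r$ rather than merely to $B_r$ or to the naive fibered product $K_{r-1}\times_{K_{r-2}}K_{r-1}$. The delicate point is ensuring the lifted point genuinely survives the residuation $:E$, i.e.\ that it does not accidentally lie in the exceptional divisor where the blowup fails to be an isomorphism; the hypothesis that the $u^{(i)}$ are pairwise distinct is exactly what keeps the point off $E$ and off the diagonal loci, so it must be carefully tracked through the iteration. The upper bound is comparatively routine once the reduction to the coordinate charts is in place, since there the defining equations $\lambda^{(i)}=0$ make the fiber dimension transparent.
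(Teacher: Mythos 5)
Your proposal is correct and follows essentially the same route as the paper: the lower bound via distinct tuples in $\P(\ker df_x)$ lifting to points of $K_r$ off the exceptional divisor (where the residuation $:E$ is locally trivial, exactly the mechanism of the paper's discussion preceding the proposition), and the upper bound via the reduction through Propositions \ref{propMrAndPartitions}, \ref{propFunctorialityK_k} and \ref{propKrUnfolding} to the chart computation $\lambda^{(i)}=0$ with free $a^{(i)}$. Your write-up of the inductive lower-bound step is in fact somewhat more explicit than the paper's terse ``analogous,'' but it is the same argument.
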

\begin{cor}
Let $f\colon X\to Y$ between complex manifolds, with points of corank $k\geq 2$, and assume that $\dim X-\dim Y\leq k$. Then there exists $r_0$ such that $K_r$ is not dimensionally correct, for all $r\geq r_0$.
\end{cor}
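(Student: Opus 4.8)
The plan is to play two lower bounds for $\dim K_r$ against each other. Write $n=\dim X$ and $p=\dim Y$. By definition $K_r$ is dimensionally correct exactly when $\dim K_r=nr-p(r-1)$, and Corollary \ref{cor:LCI} already gives the inequality $\dim K_r\ge nr-p(r-1)$ at every point. Hence, to prove that $K_r$ is \emph{not} dimensionally correct it suffices to produce a single point of $K_r$ at which the local dimension \emph{strictly} exceeds $nr-p(r-1)$; all the non-correctness is forced by a local excess.

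First I would fix a point $x\in X$ at which $f$ has corank $k\ge 2$ and examine the fiber $F$ of the projection $K_r\to X^r$ over the diagonal point $(x,\dots,x)$. Proposition \ref{propDimDiagFiberCorank} computes its dimension to be $(r-1)(k-1)$. Since $F$ is a closed subspace of $K_r$ lying over a single point of $X^r$, every $z\in F$ satisfies $\dim_z K_r\ge\dim_z F$; picking $z\in F$ with $\dim_z F=(r-1)(k-1)$ gives
\[
\dim_z K_r\ge (r-1)(k-1).
\]
Therefore $K_r$ fails to be dimensionally correct as soon as $(r-1)(k-1)>nr-p(r-1)$, which is the only inequality that has to be verified.

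The remaining step is to check that this inequality holds for all large $r$. Rearranged it reads $(r-1)(k-1+p)>nr$, an affine inequality in $r$ whose validity for $r\gg 0$ is governed by the sign of the leading coefficient $k+p-n-1=(k-(n-p))-1$. The hypothesis $\dim X-\dim Y\le k$ gives $k-(n-p)\ge 0$; in the setting of this section, where $f$ is finite and hence $\dim X\le\dim Y$, one has the stronger bound $k-(n-p)\ge k\ge 2$, so the coefficient is strictly positive and the inequality holds for every $r\ge r_0$ as soon as $r_0$ is chosen larger than $(k-1+p)/(k+p-n-1)$. Any such $r_0$ completes the proof.

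I expect the conceptual content to reside entirely in Proposition \ref{propDimDiagFiberCorank}; once the diagonal fiber is known to have dimension $(r-1)(k-1)$, the excess is automatic and the rest is the elementary comparison with the correct dimension $nr-p(r-1)$. The genuine delicate point — and the thing to be careful about — is precisely that this comparison must make the corank-$k$ contribution \emph{dominate} the a priori correct dimension, so that the dependence on $\dim X-\dim Y$ (equivalently, on how negative $n-p$ is) is what pins down the threshold $r_0$ and is where the standing hypothesis is actually consumed.
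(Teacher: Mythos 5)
Your proof is correct and is exactly the argument the paper leaves implicit: the corollary appears with no written proof, being the direct comparison of the diagonal-fiber dimension $(r-1)(k-1)$ from Proposition \ref{propDimDiagFiberCorank} with the expected dimension $nr-p(r-1)$, using Corollary \ref{cor:LCI} to reduce non-correctness to a strict local excess at a single point, which is precisely how you structure it. The one substantive issue is the one you yourself flagged: the inequality $(r-1)(k-1+p)>nr$ holds for $r\gg 0$ exactly when $k-1>n-p$, i.e.\ $\dim X-\dim Y\leq k-2$, whereas the stated hypothesis $\dim X-\dim Y\leq k$ only bounds the leading coefficient $k+p-n-1$ below by $-1$, so in the borderline cases $n-p\in\{k-1,k\}$ the comparison fails for every $r$. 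Your repair via the section's standing assumption that $f$ is finite (so $n\leq p$ and the coefficient is $\geq k-1\geq 1$) is the right reading of the authors' intent, and your caution is warranted: as literally stated the corollary is false in the borderline case --- for a submersion $f\colon \C^3\to\C$ one has corank $k=2$ at every point and $\dim X-\dim Y=2\leq k$, yet by Proposition \ref{propMultPointsOfSubmersion} each $K_r$ is smooth with $\dim K_r=2\dim K_{r-1}-\dim K_{r-2}$, which gives $\dim K_r=rn-(r-1)p$, so every $K_r$ is dimensionally correct. In short, your argument proves the corollary under the hypothesis $\dim X-\dim Y\leq k-2$ (automatic for finite maps, since then $n-p\leq 0$), and no argument based solely on the diagonal fiber of Proposition \ref{propDimDiagFiberCorank} can do better; the weaker printed hypothesis is an inaccuracy in the statement, not a gap in your proof.
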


\subsection*{$\boldsymbol{K_r}$ and target multiple points}Kleiman, Lipman and Ulrich \cite{Kleiman1996The-multiple-po} studied relations between the multiple point spaces given by iteration and by the Fitting ideals.  For any finite map $f\colon X\to Y$, the subspace $N_r(f)\subseteq Y$ is given by the vanishing of the $r-1$ Fitting ideal of $f_*\cO_X$. The image of $f$ is $N_1(f)$, the double points of $f$ in $Y$ are $N_2(f)$, and so on. Pulling back this points, we obtain the multiple point spaces $M_r(f)=f^{-1}(N_r(f))\subseteq X$. Here, $X$ and $Y$ are not assumed to be smooth, and a suitable extended definition of corank is used. Write 
\[K_{r+1}\stackrel{f_r}{\longrightarrow} K_{r}\] for the usual maps, and  
\[K_{r}\stackrel{\rho_{r}}{\longrightarrow} X\] the maps obtained by composition. 

For finite maps $f\colon X\to Y$ of corank one, with $\dim Y=\dim X+1$, they show that
\[N_{r-1}(f_1)=M_r(f)\]
and that all $f_r$ are finite maps of corank one. In this case, the maps $\rho_{r}$ are also finite and have corank one, and one obtains the set-theoretical equality 
\[N_1(\rho_r)=M_r(f).\] This means that the projection $\rho_r(K_r)$ of the iterated multiple points $K_r$ is the same as the inverse image $f^{-1}(N_r(f))$ of the target multiple points. 

The first problem that one encounters in the case of corank $\geq 2$ is that the maps $f_r$, and hence the $\epsilon_r$ are not finite anymore. Therefore their pushforward modules are not finitely presented modules, their Fitting ideals are not defined and it is not clear how one should define the algebraic structure of the projections $\rho_r(K_r)$. But the problem is worse, as  $\rho_r(K_r)$ and $f^{-1}(N_r(f))$ do not agree even at the set-theoretical level. To see this, just observe that $N_r(f)$ is empty, for every $r$ bigger than the multiplicity of $f$, while $K_r$ is never empty in the presence of points of corank $\geq 2$, as a consequence of  Proposition \ref{propDimDiagFiberCorank}.

\subsection*{Lack of symmetry}
It is well known that the multiple point spaces $D^r\subseteq X^r$ are invariant by the action of the symmetric group $S_r$ by permutation of the coordinates in $X^r$. It would be reasonable to expect the spaces $B_r$ to have natural actions, lifting those on $X^r$, and that these actions restrict to actions on $K_r$ compatible with those on $D^r$. We show that, unfortunately, this is not the case for $r\geq 3$. Before this, we show that, for a manifold $X$, the action of $S_2$ on $X^2$ can be lifted to $B^2$, and indeed there is a unique way of doing it.

Let $X$ be a complex manifold and $\sigma\colon X^2\to X^2$ be the map corresponding to the transposition $(1\;2)$. To lift the action of $S_2$ on $B_2$, we must find an involution $\tilde\sigma\colon B_2\to B_2$, making the diagram 
\[
\begin{tikzcd}[column sep=small]
B_2 \arrow[r,"\tilde\sigma"] \arrow{d}& B_2\arrow{d}\\
X^2\arrow[r,"\sigma"]						& X^2
\end{tikzcd}
\]
commutative.  For this we use the fact that  $B_2$ is the blowup of $X^2$ along $\Delta X$. Let $h$ be the composite map $B_2\to X^2\stackrel{\sigma}{\longrightarrow} X^2$. Since the preimage of $\Delta X$ by $\sigma$ is again $\Delta X$, the preimage $h^{-1}(\Delta X)$ is the exceptional divisor $E$ of $B_2$. The universal property of the blowup $B_2\to X^2$ applies to $h$ and gives a unique map $\tilde\sigma$ satisfying the above commutativity. The fact that $\tilde \sigma$ is an involution follows from the fact that $\tilde \sigma$ can be identified with $\sigma$ on $B_2\setminus E$, a dense subset of $B_2$.

Now we show that, for a complex manifold $X$ of dimension at least $2$, there is no action of $S_3$ on $B^3$ lifting the action on $X^3$. We assume for simplicity that $X=\C^n$. Since the transposition $(1\;2)$ takes the set $\Delta_{13}=\{(x,x',x)\in X^3\}$ to $\Delta_{23}=\{(x',x,x)\in X^3\}$, it suffices to show that the fibers in $B_3$ of $\Delta_{13}$ and $\Delta_{23}$ are not isomorphic. We start by looking at their fibers the space $B_2\times_X B_2$, which consists of points 
\[\left.\begin{array}{llllll} 
{u'}     &u  &  \\
 x''   &x'  &x
\end{array}\right.
\]
with $(x''-x)\wedge u'=0=(x'-x)\wedge u.$
At an open subset given by, say, $u'_i\neq 0\neq u''_i$, the fibre of $\Delta_{12}$ is given by the equation $x'_i-x_i=0$ and hence is a Cartier divisor. However, the fibre of $\Delta_{23}$ equals $Z\cup \Delta B_2$, where the diagonal $B_2$ is given by $x''_i=x'_i$ and $u'=u$, and $Z$ is a codimension 2 component given by $x'_i=x''_i=0$, which is not contained in $\Delta B_2$ because $n\geq 2$.

On one hand, it is clear that the fibre of $\Delta_{13}$ is a divisor in $B_3$, because it is the preimage of the corresponding fibre in $B_2\times_X B_2$, which is already a divisor. On the other hand, the space $B_3$ is obtained by blowing up $\Delta B_2$, and therefore the preimage of $\Delta B_2$ is also a divisor in $B_3$. However, since the structure map of the blowup is an isomorphism away from the exceptional divisor, and $Z$ is not contained in $\Delta B_2$, we conclude that the fibre of $D_{13}$ has a component of codimension 2 arising from $Z$. This shows that the fibres of $\Delta_{13}$ and $\Delta_{23}$ are not isomorphic.

									\appendix

								\section{Fibered products and intersections}\label{appendixIntersections}
We overview some basics about preimages, intersections and diagonals in a category $\cC$  with fibered products. Since the proofs are simple and involve very few ingredients, most of them are left to the reader.  In what follows, an arrow $X\hookrightarrow Y$ stands for a monomorphisms in $\cC$. The usual commutative diagram involving a fibered product is called a cartesian square.
 
 \begin{prop}\label{propMonomorphismsInFibProdDiag}
Consider a cartesian square
 \[
\begin{tikzcd}
X_1\times_YX_2 \arrow{r} \arrow{d}& X_1\arrow{d}\\
X_2\arrow{r}						& Y 
\end{tikzcd}
\]
If $X_1\to Y$ is a monomorphism (resp. isomorphism), then $X_1\times_YX_2\to X_2$ is a monomorphism (resp. isomorphism).
\end{prop}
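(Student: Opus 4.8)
The plan is to prove the two standard facts that monomorphisms, and then isomorphisms, are stable under base change, working purely from the universal property of the fibered product. Write $\pi_1\colon X_1\times_Y X_2\to X_1$ and $\pi_2\colon X_1\times_Y X_2\to X_2$ for the two projections, $m\colon X_1\to Y$ for the right vertical arrow (the hypothesis map), and $g\colon X_2\to Y$ for the bottom arrow, so that the cartesian square gives the commutativity $m\circ\pi_1=g\circ\pi_2$.

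For the monomorphism case, I would test $\pi_2$ against an arbitrary pair of parallel arrows. Suppose $a,b\colon T\to X_1\times_Y X_2$ satisfy $\pi_2\circ a=\pi_2\circ b$; the goal is $a=b$. Since a map into the fibered product is uniquely determined by its two components, it suffices to also check $\pi_1\circ a=\pi_1\circ b$. Here I would use the square commutativity to compute
\[
m\circ(\pi_1\circ a)=g\circ(\pi_2\circ a)=g\circ(\pi_2\circ b)=m\circ(\pi_1\circ b),
\]
and then invoke that $m$ is a monomorphism to cancel it on the left, yielding $\pi_1\circ a=\pi_1\circ b$. With both components agreeing, the uniqueness clause of the universal property forces $a=b$, so $\pi_2$ is a monomorphism.

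For the isomorphism case, I would first note that an isomorphism is in particular a monomorphism, so $\pi_2$ is already mono by the previous part; it remains to produce a two-sided inverse. Using the universal property, I would build a section $s\colon X_2\to X_1\times_Y X_2$ from the pair of maps $(\,m^{-1}\circ g\colon X_2\to X_1,\ \mathrm{id}_{X_2}\colon X_2\to X_2\,)$, which are compatible over $Y$ because $m\circ(m^{-1}\circ g)=g=g\circ\mathrm{id}_{X_2}$. By construction $\pi_2\circ s=\mathrm{id}_{X_2}$. To upgrade this section to an inverse, I would compute $\pi_2\circ(s\circ\pi_2)=(\pi_2\circ s)\circ\pi_2=\pi_2=\pi_2\circ\mathrm{id}$, and then cancel $\pi_2$ on the left using that it is a monomorphism, obtaining $s\circ\pi_2=\mathrm{id}$. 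Thus $s$ is a two-sided inverse and $\pi_2$ is an isomorphism.

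There is no deep obstacle here, as this is the familiar stability of monos and isos under pullback; the only point requiring a little care is the final step of the isomorphism case, where one must resist the temptation to construct the inverse by hand on both sides and instead leverage the already-established mono property of $\pi_2$ to turn the section into a genuine inverse. Throughout, the single substantive input is the universal property of the fibered product (uniqueness of factorizations), together with the hypothesis on $m$.
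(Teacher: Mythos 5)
Your proof is correct: the monomorphism case follows exactly as you argue by cancelling $m$ and invoking uniqueness of factorizations through the fibered product, and in the isomorphism case your construction of the section $s$ from the pair $(m^{-1}\circ g,\ \mathrm{id}_{X_2})$, followed by cancelling the already-established monomorphism $\pi_2$ to get $s\circ\pi_2=\mathrm{id}$, is sound. The paper leaves this proof to the reader (Appendix \ref{appendixIntersections} notes the proofs there ``are simple and involve very few ingredients''), and yours is precisely the standard universal-property argument the authors intend.
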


\begin{definition}\label{defDiagonalObject}\label{defPreimageObject}\label{defIntersectionObject}The \emph{diagonal} of an object $X$ is $\Delta X=X\times_X X.$
Given a morphism $f\colon X\to Y$ and a monomorphism $S\hookrightarrow Y$, the \emph{preimage of $S$ by $f$} is
$f^{-1}(S)=X\times_YS.$
Given two monomorphisms $X_i\hookrightarrow \cX$, the \emph{intersection} of $X_1$ and  $X_2$ is $X_1\cap X_2=X_1\times_\cX X_2.$
\end{definition}

 By Proposition \ref{propMonomorphismsInFibProdDiag}, the preimage comes equipped with a monomorphism $f^{-1}(S)\hookrightarrow X$ and a morphism $f^{-1}(S)\to S$, called the \emph{restriction of $f$}. Similarly, the morphisms $X_1\cap X_2\to X_i$ in the cartesian square of an intersection are monomorphisms.

\begin{prop}
 Given two commutative diagrams 
\[
\begin{tikzcd}
X_i \arrow{r} \arrow[hookrightarrow]{d}& Y\arrow{d}\\
\cX_i\arrow{r}						& Y' 
\end{tikzcd}
\]
 there is a unique  monomorphism $X_1\times_YX_2\hookrightarrow \cX_1\times_{Y'}\cX_2,$ compatible with $X_i\hookrightarrow \cX_i$ in the obvious way. Moreover,
if the morphisms $X_i\hookrightarrow \cX_i$ are isomorphisms and $Y\to Y'$ is a monomorphism, then the above morphism is an isomorphism between $X_1\times_YX_2$ and $\cX_1\times_{Y'}\cX_2$.
\end{prop}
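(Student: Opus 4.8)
The plan is to produce the comparison morphism from the universal property of the target fibered product, check that it is a monomorphism by reflecting equalities through the two projections, and finally upgrade it to an isomorphism under the extra hypotheses by verifying a universal property directly. Name the data by writing $a_i\colon X_i\to Y$ for the top arrows, $m_i\colon X_i\hookrightarrow\cX_i$ for the (mono) left arrows, $b_i\colon\cX_i\to Y'$ for the bottom arrows and $c\colon Y\to Y'$ for the right arrow, so that commutativity of the squares reads $b_im_i=ca_i$. Let $p_1,p_2$ be the projections of $X_1\times_YX_2$ and $q_1,q_2$ those of $\cX_1\times_{Y'}\cX_2$. First I would build $\phi\colon X_1\times_YX_2\to\cX_1\times_{Y'}\cX_2$ out of the two legs $m_ip_i\colon X_1\times_YX_2\to\cX_i$; these agree over $Y'$ since
\[
b_1m_1p_1=ca_1p_1=ca_2p_2=b_2m_2p_2,
\]
where I have used the commutativity of the squares together with the relation $a_1p_1=a_2p_2$. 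The universal property of $\cX_1\times_{Y'}\cX_2$ then yields a unique $\phi$ with $q_i\phi=m_ip_i$; as this equation characterises any morphism ``compatible with the $m_i$ in the obvious way'', it also gives the asserted uniqueness.

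I expect the monomorphy of $\phi$ to be the easy half. Given $f,g\colon T\to X_1\times_YX_2$ with $\phi f=\phi g$, composing with $q_i$ gives $m_ip_if=m_ip_ig$; since each $m_i$ is a monomorphism this forces $p_if=p_ig$ for $i=1,2$, and then the uniqueness clause in the universal property of $X_1\times_YX_2$ yields $f=g$. (Alternatively one could invoke Proposition \ref{propMonomorphismsInFibProdDiag}, but the direct chase is shorter.)

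For the second assertion I would assume the $m_i$ are isomorphisms and $c$ is a monomorphism, and prove $\phi$ invertible by showing that $X_1\times_YX_2$, equipped with the maps $m_ip_i$, satisfies the universal property of $\cX_1\times_{Y'}\cX_2$. Given $T$ with maps $g_i\colon T\to\cX_i$ such that $b_1g_1=b_2g_2$, set $h_i=m_i^{-1}g_i$; then $ca_ih_i=b_im_ih_i=b_ig_i$, so $ca_1h_1=ca_2h_2$. This is the one place where the hypotheses are used: because $c$ is a monomorphism I may cancel it to obtain $a_1h_1=a_2h_2$, exactly the compatibility needed to get a unique $h\colon T\to X_1\times_YX_2$ with $p_ih=h_i$, whence $q_i\phi h=m_ih_i=g_i$; uniqueness of $h$ follows by cancelling the isomorphisms $m_i$. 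Thus $X_1\times_YX_2$ represents the same functor as $\cX_1\times_{Y'}\cX_2$ and the canonical comparison realising this is $\phi$, so $\phi$ is an isomorphism. The only genuinely delicate step is this last cancellation: the two products are formed over different bases, and monomorphy of $Y\to Y'$ is precisely what transfers the equalising condition from $Y'$ back to $Y$; once it is isolated, the rest is a routine diagram chase.
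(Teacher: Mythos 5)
Your proof is correct, and since the paper states this proposition without proof (Appendix \ref{appendixIntersections} explicitly leaves these routine verifications to the reader), your universal-property chase --- constructing $\phi$ from the legs $m_ip_i$, reflecting equalities through the projections $q_i$ and the monomorphisms $m_i$ for the monomorphy claim, and cancelling the monomorphism $Y\to Y'$ to transfer the equalising condition from $Y'$ back to $Y$ --- is precisely the intended argument. You also correctly isolate the single place where each hypothesis of the second assertion is needed, so there is nothing to add.
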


	\begin{prop}
The diagonal satisfies the following properties:
\begin{enumerate}
\item For any morphism $X\to Y$, there is a unique monomorphism $\Delta X\hookrightarrow X\times_Y X,$ compatible with the projections from fibered products. 
\item A morphism  $X\to Y$ is a monomorphism if and only if $\Delta X\hookrightarrow X\times_Y X$ is an isomorphism.
\item For any commutative diagram
\[
\begin{tikzcd}
X \arrow{r} \arrow[hookrightarrow]{d}& Y\arrow{d}\\
\cX\arrow{r}						& Y' 
\end{tikzcd}
\]
Inside $\cX\times_{Y'}\cX$, we have that $\Delta X=\Delta\cX\cap(X\times_Y X)$. Equivalently $\Delta X=e^{-1}(\Delta\cX)$, for the induced monomorphism $e\colon X\times X\hookrightarrow \cX\times \cX$.
\end{enumerate}
	\end{prop}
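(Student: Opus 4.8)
The plan is to derive all three items from the universal property of the fibered product, using that $\Delta X=X\times_X X$ is canonically isomorphic to $X$ with both projections equal to the identity. For item (1), I would feed the pair $\mathrm{id}_X,\mathrm{id}_X\colon X\to X$ (which trivially agree after composing with $f$) into the universal property of $X\times_Y X$; this produces a unique morphism $\delta\colon\Delta X\to X\times_Y X$ with $p_1\circ\delta=p_2\circ\delta=\mathrm{id}_X$ for the two projections $p_1,p_2$, and uniqueness is exactly the compatibility clause. That $\delta$ is a monomorphism follows from the general fact that if $g\circ\delta$ is a monomorphism then so is $\delta$, applied to $p_1\circ\delta=\mathrm{id}_X$. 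For item (2), I would use the standard characterization that $f$ is a monomorphism if and only if $p_1=p_2$: if $\delta$ is an isomorphism then $p_i=\delta^{-1}$, forcing $p_1=p_2$; conversely, if $p_1=p_2$ then comparing compositions with $p_1$ and with $p_2$ shows $\delta\circ p_1=\mathrm{id}_{X\times_Y X}$ by the uniqueness clause, so $\delta$ is invertible.

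Item (3) is where the real work lies and is the step I expect to be the main obstacle. Here $\Delta\cX\cap(X\times_Y X)$ is, by definition, the fibered product of the two monomorphisms $\Delta\cX\hookrightarrow\cX\times_{Y'}\cX$ (item (1) for $\cX\to Y'$) and $X\times_Y X\hookrightarrow\cX\times_{Y'}\cX$ (the induced monomorphism from the preceding proposition), and I would prove the equality by showing mutual containment of subobjects. For the containment $\Delta X\le\Delta\cX\cap(X\times_Y X)$, I would exhibit the canonical monomorphism $\Delta X\hookrightarrow\cX\times_{Y'}\cX$ as factoring through both subobjects: through $X\times_Y X$ by item (1), and through $\Delta\cX$ via $\Delta X\cong X\hookrightarrow\cX\cong\Delta\cX$. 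The compatibility check, that these two factorizations induce the same map into $\cX\times_{Y'}\cX$, reduces to noting that both have both projections to $\cX$ equal to $X\hookrightarrow\cX$, so they coincide by the uniqueness clause of the universal property; the universal property of the intersection then yields the desired factorization.

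For the reverse containment, set $I=\Delta\cX\cap(X\times_Y X)$. The decisive point is that the embedding $I\hookrightarrow\cX\times_{Y'}\cX$ factors through $\Delta\cX$, so its two composite projections to $\cX$ are equal; since it also factors through $X\times_Y X$, these projections are $(X\hookrightarrow\cX)\circ q_i$ for maps $q_1,q_2\colon I\to X$, and because $X\hookrightarrow\cX$ is a monomorphism this forces $q_1=q_2$. Thus $I\to X\times_Y X$ has equal projections, so by the universal characterization of $\delta$ inside $X\times_Y X$ (any morphism with equal projections factors uniquely through $\delta$) it factors through $\Delta X$, giving $I\le\Delta X$. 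Mutual containment yields $\Delta X=\Delta\cX\cap(X\times_Y X)$, and the equivalent formulation $\Delta X=e^{-1}(\Delta\cX)$ is then immediate, since the intersection of $\Delta\cX$ with the subobject defined by the induced monomorphism $e$ is by definition the preimage $e^{-1}(\Delta\cX)$. The only real care needed is the bookkeeping ensuring the two embeddings of $\Delta X$ into the ambient space agree; the essential input is simply that $X\hookrightarrow\cX$ is a monomorphism.
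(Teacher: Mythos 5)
Your proposal is correct, and there is nothing in the paper to diverge from: this proposition sits in Appendix \ref{appendixIntersections}, where the authors explicitly leave such proofs to the reader as ``simple and involving very few ingredients.'' Your diagram chase via the universal property of the fibered product --- identifying $\Delta X\cong X$ with both projections the identity, characterizing monomorphisms by $p_1=p_2$, and settling item (3) by mutual containment of subobjects using the universal property of the intersection (Proposition \ref{propUnivPropIntersectionObject}) together with the fact that $X\hookrightarrow \cX$ is a monomorphism --- is precisely the intended argument.
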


In the case of intersections, the universal property of the fibered product takes an easier form:

\begin{prop}[Universal property of the intersection]\label{propUnivPropIntersectionObject}  Let $P$ be an object sitting in a commutative diagram 
\[
\begin{tikzcd}
P \arrow{r} \arrow{d}& X_1\arrow[hookrightarrow]{d}\\
X_2\arrow[hookrightarrow]{r}						& \cX
\end{tikzcd}
\]
 Then $P$ is the intersection of $X_1\cap X_2$ if and only if the following two hold:
\begin{enumerate}
\item One of the $P\to  X_i$ is a monomorphism.
\item For any object $P'$, equipped with morphisms $P'\to X_i$ commuting with $X_i\hookrightarrow \cX$,  there is a morphism $P'\to P$ making commutative one of the diagrams 
\[
\begin{tikzcd}[row sep=1.3em,column sep=0.3em]
P' \arrow[rr,hookrightarrow]	\arrow[dr]&	& X_i \\
	& P\arrow[ur,hookrightarrow]&
\end{tikzcd}
\]
\end{enumerate}
\begin{proof}We see that $P$ satisfies the universal property of $X_1\times_\cX X_2$ in three easy steps left to the reader: First, observe that if one of the  $P\to  X_i$ is a monomorphism, then the other is a monomorphism as well. Second, use the first observation to check that if one of the diagrams in the second item commutes, then so does the second. Finally, the unicity of the morphism $P'\to P$ follows from any of the commutativities in the second item.
\end{proof}
\end{prop}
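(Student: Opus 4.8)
The plan is to play the weakened conditions (1) and (2) off against the full universal property of the fibered product $X_1\times_\cX X_2$, which requires that every object $P'$ mapping compatibly to $X_1$ and $X_2$ over $\cX$ admit a \emph{unique} morphism $P'\to P$ making \emph{both} triangles commute. The guiding idea is that, because the structure maps $X_i\hookrightarrow \cX$ are monomorphisms, both the uniqueness clause and one of the two commutativities are automatic, so that the abbreviated conditions (1)--(2) already characterise the intersection.

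For the forward implication I would argue as follows. Assuming $P=X_1\times_\cX X_2$, condition (1) is immediate from Proposition \ref{propMonomorphismsInFibProdDiag}: since $X_1\hookrightarrow \cX$ is a monomorphism, so is its pullback $P\to X_2$ (and symmetrically $P\to X_1$). Condition (2) is then a literal weakening of the universal property, since the unique lift $P'\to P$ it furnishes makes both triangles commute, hence in particular one of them.

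The reverse implication is the substantive one. Assuming (1) and (2), I would recover the full universal property in two steps. For existence, given $P'$ over $\cX$, condition (2) supplies a morphism $g\colon P'\to P$ making one triangle commute, say the one through $X_1$. To see that the triangle through $X_2$ commutes as well, I would post-compose with the monomorphism $X_2\hookrightarrow \cX$: combining the commutativity of the square at $P$ (so that $P\to X_2\to \cX$ and $P\to X_1\to \cX$ agree), the established $X_1$-triangle, and the compatibility of $P'$ over $\cX$, the two composites $P'\to P\to X_2\to \cX$ and $P'\to X_2\to \cX$ coincide, and cancelling the monomorphism $X_2\hookrightarrow \cX$ gives the $X_2$-triangle. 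For uniqueness, any two lifts $g,g'\colon P'\to P$ that make both triangles commute become equal after composing with whichever leg $P\to X_i$ is a monomorphism (condition (1)) and cancelling it.

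The main obstacle, though a mild one, is bookkeeping the asymmetry in condition (2): it guarantees only \emph{one} of the two triangles, and a priori the favoured one could depend on $P'$. What dissolves this is exactly the cancellation argument above, which upgrades commutativity of either triangle to commutativity of both by pushing the identity into $\cX$ along the monomorphisms $X_i\hookrightarrow \cX$. Once this is secured, the statement follows formally from the defining cancellation property of monomorphisms, using no special feature of the category of complex spaces.
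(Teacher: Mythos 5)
Your proof is correct and takes essentially the same route as the paper's (sketched) argument: the forward direction via the pullback-of-monomorphisms lemma (Proposition \ref{propMonomorphismsInFibProdDiag}), and the reverse direction by cancellation along the monomorphisms $X_i\hookrightarrow\cX$ and the mono leg $P\to X_i$. Your upgrade of one commuting triangle to both, and the uniqueness of $P'\to P$ obtained by cancelling the monomorphic leg, are precisely the second and third steps the paper leaves to the reader.
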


									\section{Some proofs}\label{appendixProofs}
	Three basic lemmas will be used. The first two of them follow immediately from the definition of the symmetric algebra.
\begin{lem}\label{lemMorphExtensionToSym}
Any morphism $M\to N$ of $R$-modules extends uniquely to a morphism of $R$-algebras $S(M)\to S(N).$
Moreover, If $M\to N$ is an epimorphism (resp. isomorphism), then $S(M)\to S(N)$ is an epimorphism (resp. isomorphism).
\end{lem}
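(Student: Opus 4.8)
The plan is to derive everything from the universal property of the symmetric algebra, which is really the only ingredient needed. Recall that $S(M)$ comes equipped with a canonical $R$-module homomorphism $\iota_M\colon M\to S(M)$, identifying $M$ with the degree-one part, and that this map is universal in the following sense: for every commutative $R$-algebra $A$ and every $R$-module map $\varphi\colon M\to A$, there is a unique $R$-algebra homomorphism $\tilde\varphi\colon S(M)\to A$ with $\tilde\varphi\circ\iota_M=\varphi$. For the first assertion, given a module map $f\colon M\to N$ I would apply this to the composite $\iota_N\circ f\colon M\to S(N)$, viewing $S(N)$ as a commutative $R$-algebra; the universal property then yields a unique $R$-algebra map $S(f)\colon S(M)\to S(N)$ satisfying $S(f)\circ\iota_M=\iota_N\circ f$, which is exactly the required extension of $f$.

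Next I would record uniqueness and functoriality, both of which are immediate from the fact that $S(M)$ is generated in degree one: since $S(M)$ is generated as an $R$-algebra by $\iota_M(M)$, any two $R$-algebra homomorphisms out of $S(M)$ that agree on $\iota_M(M)$ coincide. This gives the uniqueness clause of the first assertion, and applying it to identities and composites shows $S(\mathrm{id}_M)=\mathrm{id}_{S(M)}$ and $S(g\circ f)=S(g)\circ S(f)$, i.e.\ that $S(-)$ is a functor.

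For the second assertion I would argue as follows. If $f$ is surjective, then $\iota_N(N)=\iota_N(f(M))=S(f)(\iota_M(M))$ lies in the image of $S(f)$; as $\iota_N(N)$ generates $S(N)$ as an $R$-algebra and the image of an $R$-algebra homomorphism is a subalgebra, $S(f)$ is surjective. If $f$ is an isomorphism with inverse $g$, then functoriality gives $S(g)\circ S(f)=S(g\circ f)=S(\mathrm{id}_M)=\mathrm{id}_{S(M)}$ and symmetrically $S(f)\circ S(g)=\mathrm{id}_{S(N)}$, so $S(f)$ is an isomorphism with inverse $S(g)$.

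There is really no hard step here: the whole lemma is a formal consequence of the universal property, and the only point worth stating carefully is that ``extends'' is to be read as compatibility with the degree-one inclusions $\iota_M,\iota_N$, and that generation of $S(M)$ in degree one is what simultaneously delivers the uniqueness of the extension and the epimorphism case.
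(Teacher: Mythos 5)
Your proof is correct and coincides with what the paper intends: the paper gives no written proof for this lemma, stating only that it ``follows immediately from the definition of the symmetric algebra,'' and your argument via the universal property of $\iota_M\colon M\to S(M)$, generation in degree one, and functoriality is precisely the standard unfolding of that remark. Note also that your reading of ``epimorphism'' as surjection matches the paper's usage (its $\twoheadrightarrow$ arrows), so no further caveat is needed.
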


\begin{lem}\label{lemMorphSymOverDiffrentBasis}
For any ring morphism $R'\to R$ and any $R$-module $M$, there is unique graded morphism of algebras $S_{R'}(M)\to S_R(M),$
which is $R'\to R$ in degree zero and $\id_M$ in degree one. For all $d\geq 1$, the degree $d$ part $(S_{R'}(M))_d\to (S_R(M))_d$ is an epimorphism, and it is an isomorphism  
if $R'\to R$ is an epimorphism.
\end{lem}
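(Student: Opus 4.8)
The plan is to deduce all three assertions from the universal property of the symmetric algebra, after viewing $S_R(M)$ as an $R'$-algebra through the given ring map. First I would make $S_R(M)$ into an $R'$-algebra via the composition $R'\to R\hookrightarrow S_R(M)$, where the first arrow is the given ring morphism and the second is the degree-zero structural inclusion. The degree-one inclusion $M\hookrightarrow S_R(M)$ is $R$-linear, hence $R'$-linear, so the universal property of $S_{R'}(M)$ furnishes a unique $R'$-algebra morphism $\phi\colon S_{R'}(M)\to S_R(M)$ restricting to it. Since $\phi$ carries the generating degree-one part $M$ into degree one and $S_{R'}(M)$ is generated as an $R'$-algebra by $M$, the morphism $\phi$ is graded; by construction it is $\id_M$ in degree one, and it is the map $R'\to R$ in degree zero because $\phi$ is an $R'$-algebra morphism. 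For uniqueness I would observe that any graded algebra morphism which is $R'\to R$ in degree zero is forced to be $R'$-linear, hence determined by its degree-one restriction, so it coincides with $\phi$.

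Next I would treat the surjectivity of the degree-$d$ component $\phi_d$ for $d\geq 1$. The key remark is that $S_R^d(M)$ is generated as an abelian group by pure products $m_1\cdots m_d$ with $m_i\in M$, since any scalar from $R$ can be absorbed into one of the factors using that $M$ is an $R$-module. As $\phi$ is multiplicative and equals $\id_M$ in degree one, each such product equals $\phi_d(m_1\cdots m_d)$ for the corresponding product computed inside $S_{R'}^d(M)$; hence $\phi_d$ is onto.

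Finally, for the case where $R'\to R$ is an epimorphism, the plan is to upgrade surjectivity to bijectivity by comparing the underlying tensor powers. The input I would invoke is that a ring epimorphism $R'\to R$ makes the multiplication $R\otimes_{R'}R\to R$ an isomorphism; consequently $R\otimes_{R'}N\cong N$ for every $R$-module $N$, and the canonical map $M\otimes_{R'}N\to M\otimes_R N$ is an isomorphism for all $R$-modules $M,N$. Iterating this over the factors of a tensor power yields a natural isomorphism $M^{\otimes_{R'} d}\cong M^{\otimes_R d}$ compatible with the $S_d$-actions; passing to coinvariants (i.e.\ to the symmetric powers) then shows that $\phi_d\colon S_{R'}^d(M)\to S_R^d(M)$ is an isomorphism. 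When $R'\to R$ is merely surjective, say $R=R'/I$, the same conclusion is even more transparent, since the $R'$-action on $M$ factors through $R$ and so the defining relations of $M^{\otimes_{R'}d}$ and of $M^{\otimes_R d}$ literally coincide.

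I expect the only genuine content to lie in the isomorphism statement: existence, uniqueness and surjectivity follow formally from the universal property and from generation in degree one, whereas the isomorphism hinges on the tensor-product characterization of ring epimorphisms. Everything else is bookkeeping with the grading.
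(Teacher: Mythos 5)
Your proof is correct and matches the paper's approach insofar as the paper offers one: the authors simply declare this lemma to ``follow immediately from the definition of the symmetric algebra,'' and your universal-property construction of the map, the generation-in-degree-one argument for surjectivity, and the tensor-power comparison in the epimorphism case are precisely the routine details being elided. As a minor remark, your invocation of $R\otimes_{R'}R\cong R$ handles categorical ring epimorphisms and not just surjections, which is slightly more than the paper needs, since in its only application (the proof of Proposition \ref{propResidualOfSubspaceAlongIntersection}) the morphism $e^*$ is onto and your final, more elementary observation about identical defining relations already suffices.
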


\begin{lem}\label{lemMorphismToRelativeProj}
Let $\sM$ be a quasi-coherent $\cO_T$-module and $Y=\Proj_TS(\sM)$. Let $X\stackrel{g}\to T$ be another scheme over $T$. To give a morphism
\[
\begin{tikzcd}[row sep=1.3em,column sep=0.3em]
X \arrow[rr]	\arrow[dr]&	& Y\arrow[dl] \\
	& T&
\end{tikzcd}
\]
is equivalent to give
\begin{enumerate}
\item An invertible sheaf $\sL$ on $X$,
\item An epimorphism $\psi\colon  g^*\sM\twoheadrightarrow \sL$ of $\cO_T$-modules.
\end{enumerate}
Two pairs $(\sL,\psi)$ and $(\sL',\psi')$ determine the same morphism if and only if there exists an isomorphism of $\cO_T$-modules $\sL\stackrel{\sim}{\to}\sL'$, such that the following diagram commutes:
\[
\begin{tikzcd}[row sep=1.3em,column sep=0.3em]
&\sM	\arrow[dl]\arrow[dr]& \\
\sL	\arrow[rr,"\sim"]& &\sL'
\end{tikzcd}
\]
\begin{proof} This is a particular case of
\cite[Tag 01O4]{stacks-project}, after observing that $\psi$ extends uniquely to an epimorphism $g^*S(\sM)\twoheadrightarrow S(\sL)$, by Lemma \ref{lemMorphExtensionToSym},  and that $\bigoplus_n \sL^{\otimes n}=S(\sL)$, because $\sL$ is invertible. \end{proof}
\end{lem}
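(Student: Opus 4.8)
The plan is to deduce the lemma from the representability of relative $\Proj$ established in the Stacks Project, specialised to the symmetric algebra. Since $S(\sM)$ is a quasi-coherent graded $\cO_T$-algebra generated in degree one by $\sM$, the space $Y=\Proj_T S(\sM)$ falls within the scope of \cite[Tag 01O4]{stacks-project}. That result describes, for the structure morphism $g\colon X\to T$, the $T$-morphisms $X\to Y$ as the equivalence classes of pairs $(\sL,\Psi)$, where $\sL$ is an invertible $\cO_X$-module and $\Psi\colon g^*S(\sM)\to\bigoplus_{n\geq 0}\sL^{\otimes n}$ is a morphism of graded $\cO_X$-algebras satisfying the relevant generation (surjectivity) condition, two pairs being equivalent when related by an isomorphism $\sL\stackrel{\sim}{\to}\sL'$ compatible with the $\Psi$'s. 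The task is then to rewrite this data in terms of a single module epimorphism $\psi\colon g^*\sM\twoheadrightarrow\sL$.

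First I would record the two formal identifications that make the translation possible. On one hand, pullback commutes with the formation of the symmetric algebra, so $g^*S(\sM)=S(g^*\sM)$ as graded $\cO_X$-algebras. On the other hand, for an invertible sheaf the symmetric powers coincide with the tensor powers, whence $\bigoplus_{n\geq 0}\sL^{\otimes n}=S(\sL)$, exactly as noted in the statement. Under these identifications, $\Psi$ becomes a graded $\cO_X$-algebra morphism $S(g^*\sM)\to S(\sL)$.

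Next I would invoke Lemma \ref{lemMorphExtensionToSym}: a graded $\cO_X$-algebra morphism $S(g^*\sM)\to S(\sL)$ is uniquely determined by, and conversely is the unique extension of, its degree-one part $\psi\colon g^*\sM\to\sL$; moreover the algebra morphism is an epimorphism precisely when $\psi$ is. This yields a bijection between the pairs $(\sL,\Psi)$ and the pairs $(\sL,\psi)$ of the statement. Since the bijection is natural in $\sL$, the equivalence relation on the $\Psi$'s transports verbatim to the stated compatibility condition: $(\sL,\psi)$ and $(\sL',\psi')$ define the same morphism if and only if there is an isomorphism $\sL\stackrel{\sim}{\to}\sL'$ commuting with $\psi$ and $\psi'$. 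Composing with the Stacks Project bijection proves Lemma \ref{lemMorphismToRelativeProj}.

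The only genuinely delicate point is matching the ``generation/surjectivity'' hypothesis appearing in \cite[Tag 01O4]{stacks-project} with the plain requirement that $\psi$ be an epimorphism. This is where the second half of Lemma \ref{lemMorphExtensionToSym} does the work: epimorphisms extend to epimorphisms, and the converse holds because $S(\sL)$ is generated in degree one, so surjectivity of the whole algebra map is detected by its degree-one part. Once that equivalence is in place, the two identifications above and the routine correspondence of equivalence relations finish the argument.
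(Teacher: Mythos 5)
Your proposal is correct and follows essentially the same route as the paper: both reduce the statement to \cite[Tag 01O4]{stacks-project} and use Lemma \ref{lemMorphExtensionToSym} to pass between the module epimorphism $\psi\colon g^*\sM\twoheadrightarrow\sL$ and the graded algebra epimorphism $g^*S(\sM)\twoheadrightarrow S(\sL)=\bigoplus_n\sL^{\otimes n}$. Your write-up merely spells out the details (compatibility of pullback with $S(-)$, transport of the equivalence relation) that the paper's one-line proof leaves implicit.
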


\begin{deferredproof}[Proposition \ref{propResidualOfSubspaceAlongIntersection}]\label{proofResidualOfSubspaceAlongIntersection} Given $e\colon X\hookrightarrow \cX$, let $W=\cW\cap X=e^{-1}(\cW)$. We have an epimorphism \[\cO_\cX\stackrel{e^*}{\twoheadrightarrow}\cO_X,\] taking $I_\cW$ onto $I_W$. The fact that $W=e^{-1}(\cW)$ means that $I_W$, the ideal of $W$, is generated over $\cO_X$ by $e^*(I_\cW)$, where $I_\cW$ is the ideal of $\cW$. Since $e^*$ is onto, this in turn means that $e^*$ restricts to an epimorphism $I_\cW\twoheadrightarrow I_W$ of $\cO_\cX$-modules. By Lemma \ref{lemMorphExtensionToSym}, this morphisms extends to an epimorphism
\[S_{\cO_\cX} (I_\cW)\twoheadrightarrow S_{\cO_\cX} (I_W).\]
By Lemma \ref{lemMorphSymOverDiffrentBasis}, there is also an epimorphism $S_{\cO_\cX} (I_W)\twoheadrightarrow S_{\cO_X}I_W$. Composing these two epimorphisms we obtain $S_{\cO_\cX} (I_\cW)\twoheadrightarrow S_{\cO_X} (I_W)$, which  induces the monomorphism $\Res_WX\hookrightarrow \Res_\cW\cX$. Functoriality follows from functoriality of the elements involved.
\end{deferredproof}

%
%\begin{lem}\label{lemMorphismToRelativeProj}
%Let $\sA$ be a quasi-coherent graded $\cO_S$-algebra generated in degree one. Let $Y=\Proj_S\sA$ and let $X\stackrel{g}\to S$ be another scheme over $S$. To give a morphism
%\[
%\begin{tikzcd}[row sep=1.3em,column sep=0.3em]
%X \arrow[rr]	\arrow[dr]&	& Y\arrow[dl] \\
%	& S&
%\end{tikzcd}
%\]
%is equivalent to give
%\begin{enumerate}
%\item An invertible sheaf $\sL$ on $X$,
%\item An epimorphism $\psi\colon  g^*\sA_1\twoheadrightarrow \sL$.
%\end{enumerate}
%Two pairs $(\sL,\psi)$ and $(\sL',\psi')$ determine the same morphism if and only if there exists an isomorphism of $\cO_S$-modules $\sL\stackrel{\sim}{\to}\sL'$, such that the following diagram commutes:
%
%\[
%\begin{tikzcd}[row sep=1.3em,column sep=0.3em]
%&\sA_1	\arrow[dl]\arrow[dr]& \\
%\sL	\arrow[rr,"\sim"]& &\sL'
%\end{tikzcd}
%\]
%\begin{proof}
%\cite[Tag 01O4]{stacks-project}. Notice that saying that $Y=\Proj_S\sA$ represents the functor $F$ means that $\Hom_S(X, Y)=F(X)$. In other words, to give a morphism \[X\to Y,\] commuting with the maps to $S$, is the same as giving a pair \[(\sL,\psi)\]
%as above (modulo the relation $(\sL,\psi)\sim(\sL',\psi')$ if there is an isomoprhism $\sL\stackrel{\sim}{\to}\sL'$ satisfying the above commutativity).
%\end{proof}
%\end{lem}

\begin{deferredproof}[Proposition \ref{propResAndIntersection}]\label{proofResAndIntersection}
Write
\[X=X_1\cap X_2,\quad W=W_1\cap W_2,\]
\[R=\Res_WX,\quad R_i=\Res_{W_i}X_i\quad\text{and}\quad \cR=\Res_{\cW}\cX.\] 
Writing $e_i\colon X\hookrightarrow X_i$, and $I_{W_i}, I_W$ for the ideals defining $W_i, W$ in $X_i,X$, respectively, one has the equalities 
\[I_W=e_i{}^*I_{W_i}.\]

We  show that $R$ satisfies the universal property of the intersection $R_1\cap R_2$ (see Proposition \ref{propUnivPropIntersectionObject}). First of all, by Proposition \ref{propResidualOfSubspaceAlongIntersection}, there are monomorphisms $R\hookrightarrow R_i$, commuting with $R_i\hookrightarrow \cR$ by functoriality. 
Now let $P$ be a complex space, equipped with two morphisms $a_i\colon P\to R_i$ commuting with $R_i\hookrightarrow \cR$. We must show that there exists a morphism $P\to R$, such that the following diagram is commutative:
\[
\begin{tikzcd}[row sep=1.3em,column sep=0.3em]
P \arrow[rr,"a_1"]	\arrow[dr]&	& R_1 \\
	& R\arrow[ur,hookrightarrow]&
\end{tikzcd}
\]
Let $g_i\colon P\to X_i$ be the composition $P\hookrightarrow R_i\to X_i$. The morphisms considered so far fit into the commutative diagram
\[\begin{tikzcd}[sep=0.8em]
&P\arrow[dddrr,]\arrow[ddddl]\arrow[dddddrr, bend left=85]\arrow[ddddddl,bend right=85]\\
\\
\\
	&R \arrow[rr,hookrightarrow] \arrow[dl,hookrightarrow] \arrow[dd] 	&	&  R_1 \arrow[dd] \arrow[dl,hookrightarrow] \\
 R_2 \arrow[rr,crossing over,hookrightarrow] \arrow[dd]&&  \cR& \\
	&X\arrow[rr,swap,hookrightarrow] \arrow[dl,hookrightarrow] && X_1 \arrow[dl,hookrightarrow] \\
 X_2 \arrow[rr,hookrightarrow] && \cX\arrow[from=uu,crossing over]
\end{tikzcd}
\]
The commutativity of $g_i$ with $X_i\hookrightarrow \cX$ implies the existence of a unique morphism 
$g\colon P\to X$, satisfying the equalities
\[g_i=e_i\circ g.\]

Observe that $\Res_{W_i}X_i$ is the relative homogeneous spectrum of a symmetric algebra over $\cO_{X_i}$, and that we have defined $g_i$ so that $a_i$ is compatible with $P\to X_i$ and $R_i\to X_i$. Therefore, by Lemma \ref{lemMorphismToRelativeProj}, the existence of $a_i$ implies the existence of an invertible sheaf $\sL_i$ on $P$ (up to isomorphism) and an epimorphism of $\cO_{X_i}$-modules
\[g_i^*I_{W_i}\twoheadrightarrow \sL_i.\]
In particular, since $e_1^*$ is an epimorphism $\cO_{X_1}\twoheadrightarrow\cO_X$ and we have the equalities $g_1^*=g^*\circ e_1^*$ and $e_1^*I_{W_1}=I_W$, this morphism determines an epimorphism of $\cO_X$-modules 
\[g^*I_W\twoheadrightarrow \sL_1.\]
This epimorphism in turn determines a morphism $P\to R$, satisfying the desired commutativity by construction.
\end{deferredproof}

\begin{deferredproof}[Lemma \ref{LemaKleiman}]\label{proofLemaKleiman}Applying Remark \ref{remResSubspaceRegEmb} to the inclusions $Y\cap Z\subset Z\subset X$, we obtain
 \[\Res_{Y\cap Z} Z=r^{-1}(Z):r^{-1}(Y\cap Z).\] We show that this space is isomorphic to $b^{-1}(Z):b^{-1}(Y)$.

Let $Y=V( I )$ and $Z=V(J)$, for some coherent ideal sheaves in $X$. Assume that $ I $ is generated in an affine open subset $U\subseteq X$ by a regular sequences $g_1,\dots,g_k\in\mathcal O_X(U)$, which may be completed to a regular sequence $g_1,\dots,g_k,h_1,\dots,h_r$ generating $ I +J$. We may also complete $h_1,\dots,h_r$ with some $p_1,\dots, p_l$ to get a set of generators of $J$.

The blowup $\Bl_YX$ in $U':=b^{-1}(U)$ is isomorphic to the complex subspace of $U\times \mathbb P^{k-1}$, defined by the vanishing of the $2\times 2$-minors of the matrix
$$
\left(
\begin{array}{ccc}
g_1  & \cdots  & g_k \\
u_1  & \cdots  & u_k
\end{array}
\right)
$$
and $b^{-1}(Z)$ is obtained in $U'$ just by adding the equations $h_1=\dots=h_k=0$ and $p_1=\dots=p_l=0$. For each $i=1,\dots,k$, let $U'_i$ be the affine open subset of $U\times \mathbb P^{k-1}$ given by $u_i=1$. The space  
$b^{-1}(Z)$ is defined in $U'_i$ by the ideal $A'=A'_1+A'_2$, where:
\begin{enumerate}
\item $A'_1$ is generated by $g_s-g_iu_s$, with $s=1,\dots,k$, $s\ne i$;
\item $A'_2$ is generated by $h_1,\dots,h_r$ and $p_1,\dots,p_l$.
\end{enumerate}
The preimage $b^{-1}(Y)$ is defined in $U_i'$ by the ideal $B'=A'_1+(g_i)$. Therefore
\begin{align*}
A':B'&=(A'_1+A'_2):(A_1'+(g_i))\\
&=\left((A'_1+A'_2):A_1'\right)\cap \left((A'_1+A'_2):g_i\right)\\
&=(A'_1+A'_2):g_i.
\end{align*}

Analogously, $\Bl_{Y\cap Z} X$  is defined in the open set $U'':=r^{-1}(U)$ as the closed complex subspace of $U\times \mathbb P^{k+r-1}$ given by the $2\times 2$-minors of the matrix
\[
\left(
\begin{array}{cccccc}
g_1  & \cdots  & g_k & h_1 & \cdots & h_r  \\
u_1  & \cdots  & u_k & v_1 & \cdots & v_r
\end{array}
\right)
\]
Here we cover $U\times \mathbb P^{k+r-1}$ by affine open subsets $U''_i=\{u_i=1\}$, with $i=1,\dots,k$, and $V''_j\{v_j=1\}$, with $j=1,\dots,r$. On one hand, the defining ideal of $r^{-1}(Z)$ in $U_i''$ is $A''=A''_1+A''_2+g_iV$, where:
\begin{enumerate}
\item $A''_1$ is generated by $g_s-g_iu_s$, with $s=1,\dots,k$, $s\ne i$;
\item $A''_2$ is generated by $h_1,\dots,h_r$ and $p_1,\dots,p_l$;
\item $V$ is generated by $v_1,\dots,v_r$.
\end{enumerate}
The defining ideal of $r^{-1}(Y\cap Z)$ in $U_i''$ is $B''=A''_1+A''_2+(g_i)$, hence:
\begin{align*}
A'':B''&=(A''_1+A''_2+g_iV):(A''_1+A''_2+(g_i))\\
&=(A''_1+A''_2+g_iV):g_i\\
&=((A''_1+A''_2):g_i)+V.
\end{align*}
We have an isomorphism
\[
\frac{\mathcal O_{U_i''}}{A'':B''}=\frac{\mathcal O_{U_i''}}{(A''_1+A''_2):g_i+V}\cong 
\frac{\mathcal O_{U_i'}}{(A'_1+A'_2):g_i}=\frac{\mathcal O_{U_i'}}{A':B'},
\]
 induced by the map 
\[\varphi:(b^{-1}(Z):b^{-1}(Y))\cap U''_i\to (r^{-1}(Z):r^{-1}(Y\cap Z))\cap U'_i,
\]
given by $(x,[u])\mapsto (x,[u,0])$ for $x\in U$ and $u\in \mathbb P^{k-1}$. It follows that this map is an isomorphism of complex spaces.

On the other hand, the defining ideal of $r^{-1}(Z)$ in $V_j''$ is generated by $g_1,\dots,g_k$ and $h_1,\dots,h_r$, which coincides with the defining ideal of $r^{-1}(Y\cap Z)$. It follows that
\[
\left(r^{-1}(Z):r^{-1}(Y\cap Z)\right)\cap V_j''=\emptyset.
\]
Therefore, the above isomorphisms from $(b^{-1}(Z):b^{-1}(Y))\cap U''_i$ to $(r^{-1}(Z):r^{-1}(Y\cap Z))\cap U'_i$ glue together to give an isomorphism of complex spaces
\[
\varphi\colon  (b^{-1}(Z):b^{-1}(Y)) \to (r^{-1}(Z):r^{-1}(Y\cap Z)).
\]
\end{deferredproof}

\begin{deferredproof}[Proposition \ref{propMrAndPartitions}]\label{proofMrAndPartitions} To use induction on $r$ we  prove a slightly more detailed result. We write $K_{r_i}$ for $K_{r_i}(f^{(i)})$, as the $f^{(i)}$ are clear from the context. Let $z\in K_r$ project to a point $w\in K_{r-1}$, via the map $K_r\to K_{r-1}$.  We claim that $K_{r-1}(f)$, around $w$, is locally isomorphic to 
\[K_{r_1}\times_Y\dots\times_Y K_{r_s},\]
for a partition $r_1,\dots,r_s=r-1$. But we also claim that one of the following statements holds:
\begin{enumerate}
\item $K_r$ is locally isomorphic to 
\[K_{r_1}\times_Y\dots\times_Y K_{r_s}\times_YK_1,\] and the map $K_r\to K_{r-1}$ drops the last component.
\item $K_r$ is locally isomorphic to 
\[K_{r_1}\times_Y\dots\times_Y K_{r_i+1}\times_Y\dots\times_YK_{r_s},\] with the map $K_{r}\to K_{r-1}$ being the restriction of \[\id_{K_{r_1}}\times\dots\times(K_{r_i+1}\to K_{r_i})\times\dots\times\id_{K_{r_s}}.\]
\end{enumerate}
 
 There is nothing to prove for $K_1$ and for $K_2$ there are two cases: if $z\in K_2$ is not contained in the exceptional divisor of $B_2$, then locally $K_2$ is isomorphic to $X\times_Y X$, that is, $K_2\cong K_1\times_Y K_1$, which is case (1). On the exceptional divisor we are looking at $K_2\to K_1$, which is case (2).
 
 Now assume that the statement holds up to $r\geq 2$ and compute $K_r\times_{K_{r-1}}K_r$, around a point whose first projection is $z\in K_r$ and its second projection is $z' \in K_r$. Consider the following cases:

If $K_r$ is of the form (1) around both $z$ and $z'$, then $K_r\times_{K_{r-1}}K_r$ is isomorphic to 
\[K_{r_1}\times_Y\dots\times_Y K_{r_s}\times_YK_1\times_YK_1.\]
We need to subdivide this case further: If $z=z'$, then the previous isomorphism  takes $\Delta K_r$ to $K_{r_1}\times_Y\dots\times_Y K_{r_s}\times_Y\Delta K_1.$ In this case, the space $K_{r+1}$ is locally isomorphic to 
\[K_{r_1}\times_Y\dots\times_Y K_{r_s}\times_YK_2.\] The map $K_{r+1}\to K_r$ is given by $K_2\to K_1$ on the last component and leaves the other components untouched. This is an instance of case (2). If $z\neq z'$, then the diagonal $\Delta K_r$ does not intersect the open subset of $K_r\times_{K_{r-1}}K_r$ we are looking at, as long as the open neighborhoods $U_i$ are small enough. In this case $K_{r+1}$ is locally isomorphic to $K_r\times_{K_{r-1}}K_r$ and the map $K_{r+1}\to K_r$ drops the last component. This falls into case (1).

If $K_r$ is of the form (1) around $z$ and of the form (2) around $z'$, then $K_r\times_{K_{r-1}}K_r$ is isomorphic to 
\[K_{r_1}\times_Y\dots\times_Y K_{r_i+1}\times_Y\dots\times_YK_{r_s}\times_YK_1,\] and the first projection to $K_r$ is given by $K_{r_i+1}\to K_{r_i}$ on the $i$th component. By hypothesis, these case occurs only for $z\neq z'$, hence $K_{r+1}$ is locally isomorphic to $K_r\times_{K_{r-1}}K_r$ and we are in the case (2).

We leave to the reader to check the following cases: If $K_r$ is of the form (2) around $z$ and of the form (1) around $z'$, then 
%$K_r\times_{K_{r-1}}K_r$ is isomorphic to 
%\[K_{r_1}\times_Y\dots\times_Y K_{r_i+1}\times_Y\dots\times_YK_{r_s}\times_YK_1,\] and the first projection to $K_r$ drops the last component. By hypothesis $z\neq z'$, therefore $K_{r+1}$ is locally isomorphic to $K_r\times_{K_{r-1}}K_r$ and 
we are in case (1). When $K_r$ is of the form (2) around $z$ and $z'$,
% there are $i,j\leq s$, such that $K_r$ around $z$ is of the form 
%\[K_{r_1}\times_Y\dots\times_Y K_{r_i+1}\times_Y\dots\times_YK_{r_s},\]
%and around $z'$ is of the form
%\[K_{r_1}\times_Y\dots\times_Y K_{r_j+1}\times_Y\dots\times_YK_{r_s}.\]
%This case has to be subdivided in two cases: If $i=j$, then
% $z=z'$ and locally 
%$K_r\times_{K_{r-1}}K_r$ is isomorphic to 
%\[K_{r_1}\times_Y\dots\times_Y (K_{r_i+1} \times_{K_{r_i}} K_{r_i+1})\times_Y\dots\times_YK_{r_s}.\]
%The first projection $K_r\times_{K_{r-1}}K_r\to K_r$ corresponds to the first projection $K_{r_i+1} \times_{K_{r_i}} K_{r_i+1}\to K_{r_i+1}$ on the $i$th component. The diagonal $\Delta K_r$ is mapped to \[K_{r_1}\times_Y\dots\times_Y \Delta K_{r_i+1} \times_Y\dots\times_YK_{r_s}.\]
%It follows that $K_{r+1}$ is locally isomorphic to \[K_{r_1}\times_Y\dots\times_Y  K_{r_i+2} \times_Y\dots\times_YK_{r_s},\] and $K_{r+1}\to K_r$ is given by $K_{r_i+2}\to K_{r_i+1}$ on the $i$th component. This 
%
%we are in case (2). If $i\neq j$, then $z\neq z'$ and  
%$K_r\times_{K_{r-1}}K_r$ is locally isomorphic to 
%\[K_{r_1}\times_Y\dots\times_Y K_{r_i+1} \times_Y\dots\times_Y K_{r_j+1}\times_Y\dots\times_YK_{r_s}.\] The first projection $K_r\times_{K_{r-1}}K_r\to K_r$ corresponds to $K_{r_j+1}\to K_{r_j}$. In this case $K_{r+1}$ is locally isomorphic to $K_r\times_{K_{r-1}}K_r$ and
 we are in case (2).
\end{deferredproof}

	\begin{deferredproof}[Lemma \ref{lemTriangleK}]\label{proofTriangleK}
We proceed by induction on $s$, but skip the case of $s=2$, as is analogous to the general case. Assume that the statement is true for $s-1$. We may use the induction hypothesis and usual commutativities for fibered products to obtain the equality of the following compositions:
\begin{align*}
\Big((K_{r+1}/K_{r})^s\to (K_{r+1}/K_{r})^{s-1}\to (K_{r}/K_{r-1})^{s}\to K_{r-1}\Big)&=\\
\Big((K_{r+1}/K_{r})^s\to (K_{r+1}/K_{r})^{s-1}\to (K_{r}/K_{r-1})^{s}\stackrel{\beta}{\to} K_r\to K_{r-1}\Big)&=\\
\Big((K_{r+1}/K_{r})^s\to (K_{r+1}/K_{r})^{s-1}\stackrel{\beta}{\to} K_{r+1}\to K_r\to K_{r-1}\Big)&=\\
\Big((K_{r+1}/K_{r})^s\stackrel{\beta}{\to} K_{r+1}\to K_{r}\to K_{r-1}\Big)&=\\
\Big((K_{r+1}/K_{r})^s\stackrel{\beta}{\to} K_{r+1}\stackrel{\tau}{\to} K_{r}\to K_{r-1}\Big)&.
\end{align*}

This means that the two external paths in the hexagon below are equal. Therefore, by the universal property of $(K_{r}/K_{r-1})^{s+1}$, there exists a unique map
$(K_{r+1}/K_{r})^s\to (K_{r}/K_{r-1})^{s+1}$, satisfying the desired commutativites.
\[
\begin{tikzcd}[row sep= 10,column sep=-2.85em]
(K_{r+1}/K_{r})^s \arrow[rr] \arrow[dr,dashrightarrow] \arrow[dd,"\beta"'] \arrow[dd] 	&	&  (K_{r+1}/K_{r})^{s-1}  \arrow[dr] \\
	& (K_{r}/K_{r-1})^{s+1} \arrow[rr] \arrow[dd,"\beta"]&&  (K_{r}/K_{r-1})^s& \\
K_{r+1}  \arrow[dr,"\tau"'] &&  \\
	& K_{r} \arrow[rr] && K_{r-1}\arrow[from=uu]
\end{tikzcd}
\]
	\end{deferredproof}
\bibliography{MyBibliography} %The files containing all the articles and books you ever referenced.
\bibliographystyle{amsplain} %The style you want to use for references.
\end{document}